\definecolor{lblue}{RGB}{0,110,152}
\definecolor{dred}{RGB}{171,67,53}
\newtheorem{theorem}{Theorem}
\newtheorem{proposition}[theorem]{Proposition}
\newtheorem{lemma}[theorem]{Lemma}
\newtheorem{define}[theorem]{Definition}
\newtheorem{example}[theorem]{Example}
\newtheorem{remark}[theorem]{Remark}
\newcommand{\mendth}{\hfill \ensuremath{\vartriangle}}
\DeclareMathOperator*{\col}{col}
\DeclareMathOperator*{\diag}{diag}
\DeclareMathOperator*{\rank}{rank}
\DeclareMathOperator{\sgn}{sgn}
\DeclareMathOperator{\E}{ \mathbb{E}}
\def\E{\mathbb{E}}
\providecommand\X[1]{\boldsymbol{X_{#1}}}
\providecommand\Z[1]{\boldsymbol{Z_{#1}}}
\providecommand\phib{\boldsymbol{\emptyset}}
\providecommand\Deltab{\boldsymbol{\Delta}}
\providecommand{\blue}[1]{\color{black}{#1}\color{black}\hspace{0pt}}
\providecommand{\dgreen}[1]{\color{black}{#1}\color{black}\hspace{0pt}}
\def\llongrightarrow{\relbar\joinrel\relbar\joinrel\relbar\joinrel\rightarrow}
\providecommand{\rarrow}[1]{\stackrel{#1}{\llongrightarrow}}
\def\Xz{\boldsymbol{X}}
\def\Zz{\boldsymbol{Z}}
\providecommand\X[1]{\boldsymbol{X_{#1}}}
\providecommand\Z[1]{\boldsymbol{Z_{#1}}}
\providecommand\phib{\boldsymbol{\emptyset}}
\newenvironment{proof}{{\it Proof :~}}{\hfill$\diamondsuit$\\}
\begin{document}

\title{Ergodicity, Output-Controllability, and Antithetic Integral Control of Uncertain Stochastic Reaction Networks}

\author{Corentin Briat and Mustafa Khammash\thanks{email: corentin@briat.info, mustafa.khammash@bsse.ethz.ch; url: www.briat.info, https://www.bsse.ethz.ch/ctsb.}\\D-BSSE, ETH-Z\"urich, Switzerland}

\date{}

\maketitle

\begin{abstract}
The ergodicity and the output-controllability of stochastic reaction networks have been shown to be essential properties to fulfill to enable their control using, for instance, antithetic integral control \cite{Briat:15e}. We propose here to extend those properties to the case of uncertain networks. To this aim, the notions of interval,  robust,  sign,  and structural ergodicity/output-controllability are introduced. The obtained results lie in the same spirit as those obtained in \cite{Briat:15e} where those properties are characterized in terms of control theoretic concepts, linear algebraic conditions, linear programs, and graph-theoretic/algebraic conditions. An important conclusion is that all those properties can be characterized by linear programs. Two examples are given for illustration.\\

\noindent Keywords. \emph{Stochastic reaction networks; Ergodicity; Output-Controllability; Antithetic Integral Control; Robustness; Cybergenetics}
\end{abstract}

\section{Introduction}

\blue{Reaction networks is a powerful formalism that can represent a wide variety of real world processes \cite{Goutsias:13}. When the dynamics of those processes is subject to randomness, as in biology \cite{Thattai:01,Elowitz:02}, stochastic reaction networks have been proven to play an essential role for their modeling, analysis, filtering, and control; see e.g. \cite{Anderson:15,Briat:13i,Zechner:16,Briat:15e}. Indeed, it is now well-known that stochastic reaction networks can exhibit several interesting properties that are absent for their deterministic counterparts \cite{Vilar:02,Paulsson:00,Briat:15e,Gupta:16}. Under the well-mixed assumption, the dynamics of a stochastic reaction network can be described by a continuous-time jump Markov process evolving on an integer lattice \cite{Anderson:15}. Sufficient conditions for checking the ergodicity of open unimolecular and bimolecular stochastic reaction networks have been proposed in \cite{Briat:13i} and formulated in terms of linear programs. The concept of ergodicity is of fundamental importance as it can serve as a basis for the development of a control theory for stochastic reaction networks and, consequently, of a control theory for biological systems. Indeed, verifying the ergodicity of a control system, consisting for instance of an endogenous biomolecular network controlled by a synthetic controller, would establish that the closed-loop network is well-behaved (e.g. globally converging first- and second-order moments) and that the designed control system achieves its goal (e.g. set-point tracking and perfect adaptation properties). This  procedure is analogous to that of checking the global stability of a closed-loop system in the deterministic setting; see e.g. \cite{DelVecchio:16}. Additionally, designing synthetic circuits that are provably ergodic could allow for the rational design of synthetic networks that can exploit noise in their function. A recent example is that of the antithetic integral feedback controller proposed in \cite{Briat:15e} that has been shown to induce an ergodic closed-loop network when the open-loop network is both ergodic and output-controllable -- a closed-loop property that holds regardless the values of the controller parameters. However, a major limitation of the ergodicity and output-controllability conditions obtained in \cite{Briat:13i,Briat:15e} is their limited scope to networks with fixed and known rate parameters only -- an assumption rarely met in practice. This has motivated the consideration of stochastic reaction networks with uncertain rate parameters in\cite{Briat:16cdc,Briat:17ifacBio,Briat:17LAA}.

The objective of this paper is to provide a global picture of all the obtained results in \cite{Briat:13i,Briat:15e,Briat:16cdc,Briat:17ifacBio} by unifying and generalizing them, by providing comparisons between them, and by emphasizing their connections with results in systems theory, control theory, linear algebra, and optimization. This unified picture is obtained through the introduction of the concepts of interval-, robust-, sign-, and structural-ergodicity (resp. output controllability) for uncertain stochastic reaction networks (resp. for uncertain linear positive systems). Unlike in \cite{Briat:16cdc,Briat:17ifacBio}, all the results are stated with their proof. Novel results are also provided as a way to consolidate the structure of this unifying viewpoint.

The interval-approach considers classes of networks described by interval matrices \cite{Moore:09}. We show that checking the ergodicity and the output controllability of the entire network family reduces to checking the ergodicity of a single network and the output controllability of a single linear positive system, two problems which naturally reformulate as simple linear programs. Unlike the interval-approach, the robust approach considers the explicit dependence on the rate parameters of the matrices describing the network. In this regard, this approach may be conclusive whenever the interval-approach fails, a scenario plausible to happen when the considered network involves conversion reactions. The price to pay, however, is a higher computational cost for establishing the robust ergodicity property. Checking the output-controllability property remains the same as in the interval approach.

The sign-approach, more qualitative in nature, is based on sign-matrices \cite{Jeffries:77,Brualdi:95,Briat:17LAA,McCreesh:18} which have been extensively studied and considered for the qualitative analysis of dynamical systems, including  reaction networks, albeit much more sporadically; see e.g. \cite{Helton:10,Briat:16cdc,Giordano:16,Briat:17LAA}. In this case, again, the ergodicity and output controllability conditions can be stated as simple linear programs; see e.g. \cite{Briat:16cdc,Briat:17LAA}. The computational complexity is, hence, the same as in the interval approach. Finally, the structural-approach considers the exact parameter dependence, as the robust one. Ergodicity and output-controllability conditions are also formulated as simple linear programs under some realistic assumptions. When those conditions are not met, more expensive solutions can be obtained in the same flavor as in the robust case.}

\noindent\textbf{Outline.} We recall in Section \ref{sec:prel} several definitions and results related to reaction networks and antithetic integral control. Those concepts are extended to uncertain networks in Section \ref{sec:extensions}. Section \ref{sec:interval}, Section \ref{sec:robust}, Section \ref{sec:sign}, and Section \ref{sec:structural} extend the results of Section \ref{sec:prel} to the interval, robust, sign, and structural cases, respectively. Examples are treated in Section \ref{sec:ex}.

\noindent\textbf{Notations.} The standard basis for $\mathbb{R}^d$ is denoted by $\{e_i\}_{i=1}^d$. The sets of integers, nonnegative integers, nonnegative real numbers and  positive real numbers are denoted by $\mathbb{Z}$, $\mathbb{Z}_{\ge0}$, $\mathbb{R}_{\ge0}$ and $\mathbb{R}_{>0}$, respectively. The $d$-dimensional vector of ones is denoted by $\mathds{1}_d$ (the index will be dropped when the dimension is obvious). For vectors and matrices, the inequality signs $\le$ and $<$ act componentwise. Finally, the vector or matrix obtained by stacking the elements $x_1,\ldots,x_d$ is denoted by $\col_{i=1}^d(x_i)$ or $\col(x_1,\ldots,x_d)$. The diagonal operator $\diag(\cdot)$ is defined analogously. The spectral radius of a matrix $M\in\mathbb{R}^{n\times n}$ is defined as $\varrho(M)=\max\{|\lambda|:\det(\lambda I-M)=0\}$.

\section{Preliminaries}\label{sec:prel}

\blue{\subsection{SISO linear positive systems}\label{sec:LPS}

SISO linear systems are systems of the form
\begin{equation}\label{eq:LPS}
  \begin{array}{rcl}
    \dot{x}(t)&=&Mx(t)+bu(t)\\
    y(t)&=&c^Tx(t)\\
    x(0)&=&x_0
  \end{array}
\end{equation}
where $x,x_0\in\mathbb{R}^d$, $u\in\mathbb{R}$ and $y\in\mathbb{R}$ are the state of the system, the initial condition, the input and the output of the system. We also have that $M\in\mathbb{R}^{d\times d}$ and $b,c\in\mathbb{R}^d$. The above system is said to be internally positive if for any nonnegative initial condition and any nonnegative input, the state and the output are nonnegative. A necessary and sufficient condition for the internal positivity of \eqref{eq:LPS} is that $M$ is Metzler\footnote{A square matrix is Metzler if all its off-diagonal elements are nonnegative.} and $b,c$ are nonnegative.

We have the following result \cite{Haddad:05}:
\begin{proposition}\label{prop:stability}
  Assume that the system \eqref{eq:LPS} is internally positive. Then, the following statements are equivalent:
  \begin{enumerate}[(a)]
    \item The system \eqref{eq:LPS} with $u\equiv0$ is asymptotically stable;
    \item The matrix $M$ is Hurwitz stable;
    \item There exists a vector $v\in\mathbb{R}^d_{>0}$ such that $v^TM<0$; and
    \item $M$ is nonsingular and $M^{-1}\le0$.
  \end{enumerate}
\end{proposition}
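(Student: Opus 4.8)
The plan is to separate the purely system-theoretic equivalence from the Metzler-specific part. Statement (a) $\Leftrightarrow$ (b) is just the standard fact that the zero-input system $\dot{x}=Mx$ is asymptotically stable if and only if every eigenvalue of $M$ lies in the open left half-plane, so I would dispatch it in one line and devote the real work to the equivalence of (b), (c), and (d), which I would prove by a cycle. The two tools I would rely on are: first, the positivity of the matrix exponential, $e^{Mt}\ge 0$ for all $t\ge 0$, which is exactly the internal-positivity property already characterized by $M$ being Metzler; and second, the Perron--Frobenius structure of Metzler matrices, namely that the eigenvalue of $M$ of largest real part is real and admits a nonnegative eigenvector (apply Perron--Frobenius to the nonnegative matrix $M+\alpha I$ for $\alpha$ large and subtract $\alpha$). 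I would call this dominant eigenvalue $\lambda^\star$ and fix a nonnegative right eigenvector $u\ge 0$, $u\neq 0$, with $Mu=\lambda^\star u$; then $M$ is Hurwitz precisely when $\lambda^\star<0$, and this reformulation is what drives both reverse implications.

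For the forward implications I would use the integral representation. If $M$ is Hurwitz, then it is nonsingular and $-M^{-1}=\int_0^\infty e^{Mt}\,\d t$; since the integrand is nonnegative this yields $M^{-1}\le 0$, giving (b) $\Rightarrow$ (d). For (b) $\Rightarrow$ (c) I would set $v:=\int_0^\infty e^{M^Tt}\mathds{1}\,\d t = -M^{-T}\mathds{1}$, so that $v^TM=-\mathds{1}^T<0$; the crucial point is that $v$ is \emph{strictly} positive because the integrand $e^{M^Tt}\mathds{1}$ is nonnegative for all $t$ and equals $\mathds{1}>0$ at $t=0$, hence stays positive on a neighborhood of the origin.

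For the reverse implications I would invoke the dominant eigenvector. Given (d), I would apply $M^{-1}\le0$ to $Mu=\lambda^\star u$: nonsingularity forces $\lambda^\star\neq0$ and $M^{-1}u=(\lambda^\star)^{-1}u\le0$; reading this off a strictly positive component of $u$ gives $(\lambda^\star)^{-1}<0$, i.e. $\lambda^\star<0$, so $M$ is Hurwitz. Given (c), I would multiply $Mu=\lambda^\star u$ on the left by $v^T$: since $v^TM$ has all entries negative and $u\ge0$ is nonzero, $v^TMu<0$, while $v^Tu>0$ because $v>0$; hence $\lambda^\star=(v^TMu)/(v^Tu)<0$, again giving (b). This closes the two loops (b) $\Rightarrow$ (c) $\Rightarrow$ (b) and (b) $\Rightarrow$ (d) $\Rightarrow$ (b), which together with (a) $\Leftrightarrow$ (b) establish the full equivalence.

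The step I expect to be most delicate is the \emph{strict} positivity of $v$ in (b) $\Rightarrow$ (c): a naive choice such as $v^T=-\mathds{1}^TM^{-1}$ combined with (d) only yields $v\ge0$, and the zero pattern of $-M^{-1}$ may genuinely vanish on some coordinates when $M$ is reducible, so no shortcut through (d) alone will deliver componentwise strict inequality. The integral representation is what upgrades $v\ge 0$ to $v>0$ cleanly, and I would take care to justify the convergence of the integral (valid once $M$ is Hurwitz) and the positivity-near-zero argument rather than leaning on an irreducibility hypothesis the statement does not assume. Everything else reduces to Perron--Frobenius bookkeeping, which I would cite rather than reprove.
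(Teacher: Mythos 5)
Your argument is correct, but there is nothing in the paper to compare it against: the paper states Proposition \ref{prop:stability} as a known result, citing Haddad and Chellaboina, and gives no proof. Your self-contained route --- dispatching (a) $\Leftrightarrow$ (b) as the standard spectral criterion, getting (b) $\Rightarrow$ (d) and (b) $\Rightarrow$ (c) from $-M^{-1}=\int_0^\infty e^{Mt}\,\d t$ with $e^{Mt}\ge0$, and closing both loops via the Perron--Frobenius dominant eigenvalue $\lambda^\star$ of the Metzler matrix $M$ (correctly obtained from $M+\alpha I$ without any irreducibility hypothesis) --- is sound in every step, and the care you take with $u\ge0$ merely nonzero in the reverse implications is exactly right. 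One remark: the step you flag as most delicate is actually not delicate. The ``naive'' choice $v^T=-\mathds{1}^TM^{-1}$ under (d) already gives \emph{strict} positivity, since $v_j=-\sum_i (M^{-1})_{ij}$ can vanish only if the entire $j$-th column of $M^{-1}$ is zero, which contradicts nonsingularity; individual entries of $-M^{-1}$ may vanish in the reducible case, but whole columns cannot. So the integral representation, while clean, is not needed to upgrade $v\ge0$ to $v>0$, and this observation would also let you shorten the cycle by proving (d) $\Rightarrow$ (c) directly.
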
}

\blue{We will also need the following result on the output controllability of linear SISO positive systems which is an extension of the results in \cite{Briat:15e,Briat:16cdc,Briat:17ifacBio} to the case of non-necessarily stable systems:
\begin{proposition}\label{prop:outputcontrollability}
  Assume that the system \eqref{eq:LPS} is internally positive. Then, the following statements are equivalent:
  \begin{enumerate}[(a)]
  \item The system $(M,b,c^T)$ defined in \eqref{eq:LPS}  is output controllable.
    \item $\rank\begin{bmatrix}
      c^Tb & c^TMb & \ldots & c^TM^{d-1}b
    \end{bmatrix}=1$.
  \item There exists a vector $w\in\mathbb{R}^d_{\ge0}$ and a scalar $\mu\in\mathbb{R}_{\ge0}$ such that $w^Tb>0$ and $w^T(M-\mu I_d)+c^T=0$.
    \item There exists a scalar $\mu\ge0$ such that $-c^T(M-\mu I_d)^{-1}b>0$ holds or, equivalently, the static-gain of the system $(M-\mu I,b,c^T)$ is positive.
    \end{enumerate}
    When the $b=e_i$ and $c=e_j$, the above statements are equivalent to
\begin{enumerate}[(a)]
  \setcounter{enumi}{4}
    \item There is a path from node $i$ to node $j$ in the directed graph $\mathcal{G}_M=(\mathcal{V},\mathcal{E})$ defined with $\mathcal{V}:=\{1,\ldots,d\}$ and
\begin{equation*}
    \mathcal{E}:=\{(m,n):\ e_{n}^TMe_{m}\ne0,\ m,n\in V,\ m\ne n\}.\hfill\mendth
\end{equation*}
\end{enumerate}
Moreover, when the matrix $M$ is Hurwitz stable, then, the statements (c) and (d) hold with $\mu=0$.\mendth
\end{proposition}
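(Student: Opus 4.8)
The plan is to reduce the whole statement to the stable case by a spectral shift, since for a Hurwitz Metzler matrix the nonnegativity of the resolvent makes all four conditions transparent. The equivalence of (a) and (b) I would treat as the classical output-controllability rank test: because the output is scalar, the $1\times d$ matrix $\begin{bmatrix} c^Tb & \cdots & c^TM^{d-1}b\end{bmatrix}$ has full row rank exactly when it is nonzero, i.e. when $c^TM^kb\ne0$ for some $k$, and by Cayley--Hamilton the range of $k$ may be taken to be all of $\mathbb{Z}_{\ge0}$. The structural fact that I would record first, and that underpins everything else, is that the Krylov (reachable) subspace $\mathrm{span}\{M^kb:k\ge0\}$ is invariant under the shift $M\mapsto M-\mu I$; hence (b) holds for $M$ if and only if it holds for $M-\mu I$, for every $\mu$. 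Since $M-\mu I$ is again Metzler and its spectrum is that of $M$ translated by $-\mu$, I can fix $\mu\ge0$ large enough that $M-\mu I$ becomes Hurwitz, and this is the $\mu$ that will serve in (c) and (d).

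The core step is then to show, for such a stabilizing $\mu$, that (b) $\Leftrightarrow$ (d) $\Leftrightarrow$ (c). Writing $A:=M-\mu I$ (Metzler and Hurwitz), Proposition \ref{prop:stability} gives $A^{-1}\le0$, hence $-A^{-1}=\int_0^\infty e^{At}\,\d t\ge0$, the integral converging by stability and the integrand being nonnegative because $A$ is Metzler. Consequently
\begin{equation*}
-c^TA^{-1}b=\int_0^\infty c^Te^{At}b\,\d t,
\end{equation*}
and $t\mapsto c^Te^{At}b=\sum_{k\ge0}\tfrac{t^k}{k!}c^TA^kb$ is a real-analytic, nonnegative function (as $b,c\ge0$). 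A nonnegative analytic function has a strictly positive integral if and only if it is not identically zero, i.e. if and only if some Taylor coefficient $c^TA^kb$ is nonzero; by the shift-invariance this is exactly (b). This yields (b) $\Leftrightarrow$ (d). For (c) I would set $w^T:=-c^TA^{-1}=c^T(\mu I-M)^{-1}$: then $w\ge0$ because $-A^{-1}\ge0$ and $c\ge0$, the identity $w^T(M-\mu I)+c^T=w^TA+c^T=0$ holds by construction, and $w^Tb=-c^TA^{-1}b$ is the static gain, so $w^Tb>0$ is precisely (d). The reverse passage from (c) to (d) is immediate whenever the chosen $\mu$ keeps $A$ invertible, since the defining equation then forces $w^T=-c^TA^{-1}$.

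It remains to handle (e) and the Hurwitz addendum. When $b=e_i$ and $c=e_j$ the integrand becomes $c^Te^{At}b=[e^{At}]_{ji}=e^{-\mu t}[e^{Mt}]_{ji}$, so (d) amounts to $[e^{Mt}]_{ji}\not\equiv0$. Since $M$ is Metzler, $M+\gamma I\ge0$ for $\gamma$ large with the same off-diagonal support, so that $[e^{Mt}]_{ji}>0$ for some (equivalently all) $t>0$ if and only if $[(M+\gamma I)^k]_{ji}>0$ for some $k$, i.e. if and only if there is a directed walk, hence a path, from $i$ to $j$ in $\mathcal{G}_M$; this is (e). For the ``moreover'' I would simply observe that if $M$ is already Hurwitz one takes $\mu=0$, whence $A=M$ and every step above runs verbatim, delivering (c) and (d) with $\mu=0$.

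The step demanding the most care is the reduction itself: one must choose $\mu$ so that $M-\mu I$ is Hurwitz — equivalently, so that the ``static gain'' appearing in (d) is well defined. For such $\mu$ the linear relation in (c) determines $w$ uniquely and the positivity certificate $w=(\mu I-M)^{-T}c$ is faithful; whereas if $\mu$ is taken equal to an eigenvalue of $M$ the relation $w^T(M-\mu I)+c^T=0$ becomes under-determined and a spurious $w\ge0$ with $w^Tb>0$ can appear, so that regime must be excluded. Making this reduction precise — guaranteeing existence of an admissible stabilizing $\mu\ge0$ and verifying that the Krylov-invariance argument transfers output controllability intact to the shifted system — is where the bookkeeping concentrates, while the positivity and analyticity arguments at the heart of (b) $\Leftrightarrow$ (c) $\Leftrightarrow$ (d) are comparatively routine.
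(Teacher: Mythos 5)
Your proof is correct and follows essentially the same skeleton as the paper's: shift to a Hurwitz matrix $M-\mu I$, invoke Proposition \ref{prop:stability} to get $(M-\mu I)^{-1}\le 0$, set $w^T=-c^T(M-\mu I)^{-1}\ge 0$, and transfer output controllability across the shift. The difference is that you are substantially more self-contained. The paper settles (a)$\Leftrightarrow$(b) ``by definition,'' proves (c)$\Leftrightarrow$(d) exactly as you do, gets (a)$\Leftrightarrow$(c) from shift-invariance of output controllability, and simply cites \cite{Briat:15e} for (e). You instead supply the integral representation $-A^{-1}=\int_0^\infty e^{At}\,dt$ together with the analyticity and nonnegativity of $t\mapsto c^Te^{At}b$ to prove (b)$\Leftrightarrow$(d) directly, and a matrix-exponential/directed-walk argument for (e) in place of the citation. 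Both fillings are sound, and your Krylov-subspace observation makes the shift-invariance of (b) transparent rather than implicit.

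The one substantive point is the singular-shift regime in statement (c), which you flag in your final paragraph and the paper's proof silently skips --- and you are right that it cannot be waved away. As literally quantified, (c) does not imply (a): take $M=\diag(0,-1)$ (Metzler), $b=e_1$, $c=e_2$, $\mu=0$, $w=(1,1)^T$; then $w^T(M-\mu I_d)+c^T=0$ and $w^Tb=1>0$, yet $c^TM^kb=0$ for all $k\ge 0$, so the system is not output controllable. The equivalence is restored once $\mu$ is restricted so that $M-\mu I$ is nonsingular (e.g., large enough that it is Hurwitz), which is the regime in which both your argument and the paper's actually operate; note that for (d)$\Rightarrow$(b) invertibility alone already suffices, since $(M-\mu I)^{-1}$ is a polynomial in $M$ by Cayley--Hamilton. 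So your closing caveat is not mere bookkeeping but a restriction the statement itself needs; with that restriction made explicit, your proof is complete and, if anything, more careful than the paper's.
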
}
\blue{\begin{proof}
  The equivalence between the statements (a) and (b) follows from the definition. The equivalence with (e) can be found in \cite{Briat:15e}. The equivalence between (c) and (d) follows from choosing $\mu\ge0$ such that $M-\mu I$ is Hurwitz stable and $w^T=-c^T(M-\mu I)^{-1}\ge0$ where the nonnegativity follows from the fact that the matrix $M-\mu I$ is Metzler and Hurwitz stable. Finally, the equivalence between (a) and (c) follows from the fact that the system $(M,b,c^T)$ is output-controllable if and only if $(M-\mu I,b,c^T)$ is output-controllable.
\end{proof}}

\subsection{General stochastic reaction networks with mass-action kinetics}\label{sec:RN}

\dgreen{We consider here a reaction network with $d$ molecular species $\X{1},\ldots,\X{d}$ that interacts through $K$ reaction channels $\mathcal{R}_1,\ldots,\mathcal{R}_K$ defined as
\begin{equation}
 \mathcal{R}_k:\ \sum_{i=1}^d\zeta_{k,i}^l\X{i}\rarrow{\rho^k}  \sum_{i=1}^d\zeta_{k,i}^r\X{i},\ k=1,\ldots,K
\end{equation}
where $\rho^k\in\mathbb{R}_{>0}$ is the reaction rate parameter and $\zeta_{k,i}^l,\zeta_{k,i}^r\in\mathbb{Z}_{\ge0}$. Each reaction is additionally described by a stoichiometric vector and a propensity function. \blue{Each reaction rate parameter is distinct and independent from the others.} The stoichiometric vector of reaction  $\mathcal{R}_k$ is given by $\zeta_k:=\zeta_k^r-\zeta_k^l\in\mathbb{Z}^d$ where $\zeta_k^r=\col(\zeta_{k,1}^r,\ldots,\zeta_{k,d}^r)$ and $\zeta_k^l=\col(\zeta_{k,1}^l,\ldots,\zeta_{k,d}^l)$. In this regard, when the reaction $\mathcal{R}_k$ fires, the state jumps from $x$ to $x+\zeta_k$. We define the stoichiometry matrix $S\in\mathbb{Z}^{d\times K}$ as $S:=\begin{bmatrix}
  \zeta_1\ldots\zeta_K
\end{bmatrix}$. When  the kinetics is  mass-action, the propensity function of reaction $\mathcal{R}_k$ is given by  $\textstyle\lambda_k(x)=\rho^k\prod_{i=1}^d\frac{x_i!}{(x_i-\zeta_{k,i}^l)!}$ and is such that  $\lambda_k(x)=0$ if $x\in\mathbb{Z}_{\ge0}^d$ and $x+\zeta_k\notin\mathbb{Z}_{\ge0}^d$. We denote this reaction network by $(\Xz,\mathcal{R})$. Under the well-mixed assumption, this network can be described by a continuous-time Markov process $(X_1(t),\ldots,X_d(t))_{t\ge0}$ with state-space $\mathbb{Z}_{\ge0}^d$; see e.g. \cite{Anderson:11}.}
\blue{This Markov process is fully described by the Chemical Master Equation or Forward Kolmogorov Equation given by \cite{Anderson:11}
\begin{equation}
  \dfrac{\partial p_{x_0}(x,t)}{\partial t}=\sum_{i=1}^K\lambda_i(x)(p_{x_0}(x+\zeta_i,t)-p_{x_0}(x,t))
\end{equation}
where $p_{x_0}(x,t)$ is the probability for the Markov process to be in state $x$ at time $t$, starting from the initial state $X(0)=x_0$. Knowing the probability distribution provides a lot of information about the behavior of the Markov process and the associated reaction network. Unfortunately, this equation is difficult to solve in general and alternative ways to study its behavior need to be considered; see e.g. \cite{Meyn:93}. In particular, it is interesting to know whether there is a unique attractive stationary distribution. This leads to the following definition:
\begin{define}[Ergodicity of a reaction network]
  The Markov process associated with the reaction network $(\Xz,\mathcal{R})$ is said to be ergodic if its probability distribution  $p_{x_0}(x,\cdot)$ globally converges to a unique stationary distribution $\pi$; i.e. for every $x_0\in\mathbb{Z}_{\ge0}^d$, we have that $p_{x_0}(x,t)\to \pi$ as $t\to\infty$. The network is exponentially ergodic if the convergence to the stationary distribution is exponential.
\end{define}
This definition is the stochastic analogue of a globally attracting equilibrium point for deterministic dynamics.}

\blue{
\begin{define}[Irreducible reaction network]\label{def:irreducibility}
  A stochastic reaction network is said to be irreducible\footnote{Computationally tractable conditions for checking the irreducibility of reaction networks are provided in \cite{Gupta:18}.} if the state-space of the underlying Markov process is irreducible.
\end{define}

\begin{define}[Open reaction network]\label{def:open}
  A reaction network is said to be open if there is no set of conserved species in the network; i.e. $z\in\mathbb{R}^d_{\ge0}$ and $z^TS=0 \Rightarrow z=0$)
\end{define}}

\subsection{Bimolecular stochastic reaction networks}

\dgreen{Let us assume here that the network  $(\Xz,\mathcal{R})$ is at most bimolecular and that the reaction rates are all independent of each other. In such a case, the propensity functions are polynomials of at most degree 2 and we can write the propensity vector as
\begin{equation}
  \lambda(x)=\begin{bmatrix}
w_0(\rho_0)\\
W(\rho_u)x\\
W_b(\rho_b,x)
  \end{bmatrix}
\end{equation}
where $w_0(\rho_0)\in\mathbb{R}^{n_0}_{\ge0}$, $W(\rho_u)x\in\mathbb{R}^{n_u}_{\ge0}$ and $W_b(\rho_b,x)\in\mathbb{R}^{n_b}_{\ge0}$ are the propensity vectors associated the zeroth-, first- and second-order reactions, respectively. Their respective rate parameters are also given by $\rho_0$, $\rho_u$ and $\rho_b$, and according to this structure, the stoichiometric matrix is decomposed as
 $S=:\begin{bmatrix}S_0 &
  S_u & S_b
\end{bmatrix}$. Before stating the main results of the section, we need to introduce the following terminology:}

\begin{define}\label{def:Ab}
  The \emph{characteristic matrix} $A(\rho_u)$ and the \emph{offset vector} $b_0(\rho)$ of a bimolecular reaction network $(\Xz,\mathcal{R})$ are defined as
  \begin{equation}
    A(\rho_u):=S_uW(\rho_u)\ \textnormal{and}\ b_0(\rho_0):=S_0w_0(\rho_0).
  \end{equation}
  Moreover, the matrix $A(\rho_u)$ is Metzler and the vector $b_0(\rho_0)$ is nonnegative for all positive rate parameters.
\end{define}

\blue{\begin{define}
  The dynamics of the first-order moments of a stochastic bimolecular reaction network $(\Xz,\mathcal{R})$ is described by the internally positive system
  \begin{equation}\label{eq:first-order}
      \dfrac{d\E[X(t)]}{dt}=A(\rho_u)\E[X(t)]+b_0(\rho_0)+S_b\E[W_b(\rho_b,X(t))]
  \end{equation}
  where $\E[X(0)]=x_0$.
\end{define}
Note that when the network is unimolecular, then the moments dynamics is described by a linear internally positive system; i.e. $A(\rho_u)$ is Metzler and $b_0(\rho_0)$ is nonnegative.
}

\subsection{Ergodicity of unimolecular and bimolecular reaction networks}

\blue{We have the following result which is a slight extension of a result in \cite{Briat:13i}:
\begin{theorem}[Ergodicity of unimolecular networks]\label{th:uni:erg}
    Let us consider an open irreducible unimolecular reaction network $(\Xz,\mathcal{R})$ with fixed rate parameters; i.e. $A=A(\rho_u)$ and $b_0=b_0(\rho_0)$. Then, the following statements are equivalent:
    \begin{enumerate}[(a)]
      \item The reaction network $(\Xz,\mathcal{R})$ is exponentially ergodic and all the moments are bounded and converging;
      \item There exists a vector $v\in\mathbb{R}^d_{>0}$ such that $v^TA<0$;
      \item The matrix $A$ is Hurwitz stable.
    \end{enumerate}
\end{theorem}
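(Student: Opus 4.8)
The plan is to dispose of the purely algebraic equivalence first and then concentrate on the genuinely probabilistic content. The equivalence between (b) and (c) is immediate from Proposition~\ref{prop:stability}: since the network is unimolecular, $A=A(\rho_u)$ is Metzler and the moment dynamics \eqref{eq:first-order} reduces to the internally positive linear system $\tfrac{d}{dt}\E[X(t)]=A\E[X(t)]+b_0$. Applying Proposition~\ref{prop:stability} with $M=A$, its statements (b) and (c) yield precisely statements (c) and (b) of the theorem. So the substance is the link between the Hurwitz condition and exponential ergodicity.

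For the direction (b)$\Rightarrow$(a) I would use a Foster--Lyapunov (Meyn--Tweedie) drift argument with a \emph{linear} Lyapunov function. Pick $v\in\mathbb{R}^d_{>0}$ with $v^TA<0$ and set $V(x)=v^Tx$. Writing $\mathbb{A}$ for the generator of the Markov process, a direct computation gives $(\mathbb{A}V)(x)=\sum_{k=1}^K\lambda_k(x)\,v^T\zeta_k$, and because the network is unimolecular this collapses exactly to $v^T(Ax+b_0)=\big(v^TA\big)x+v^Tb_0$. Since every entry of the row vector $v^TA$ is strictly negative and $x\ge0$, one bounds $\big(v^TA\big)x\le-\gamma\,\mathds{1}^Tx$ with $\gamma:=\min_i|(v^TA)_i|>0$, while $V(x)=v^Tx\le(\max_iv_i)\mathds{1}^Tx$, whence $(\mathbb{A}V)(x)\le-c_1V(x)+c_2$ for suitable $c_1>0$, $c_2\ge0$. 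This geometric drift condition, the fact that the same linear bound precludes explosion, together with the assumed irreducibility, let me invoke the standard Foster--Lyapunov theorem to conclude that there is a unique stationary distribution to which $p_{x_0}(\cdot,t)$ converges exponentially fast.

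To upgrade this to boundedness and convergence of \emph{all} moments, I would exploit the moment-closure property specific to unimolecular networks: the $m$-th order raw moments satisfy a closed linear ODE whose dynamics matrix is assembled from Kronecker sums of $A$, and Hurwitz stability of $A$ propagates to Hurwitz stability of those matrices; equivalently, one repeats the drift computation with $V_m(x)=(v^Tx)^m$ and checks that the Metzler/positivity structure again produces a geometric drift, which by the polynomial-moment version of the Meyn--Tweedie criterion bounds and forces convergence of the $m$-th moment. For the converse (a)$\Rightarrow$(c), convergence of the first-order moment for every $x_0\in\mathbb{Z}_{\ge0}^d$ forces the positive linear system $\tfrac{d}{dt}\E[X]=A\E[X]+b_0$ to converge to a finite equilibrium from every nonnegative initial condition; since such initial conditions span $\mathbb{R}^d$, this global convergence is exactly asymptotic stability of $A$, i.e. (c), again through Proposition~\ref{prop:stability}.

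The main obstacle is the drift direction (b)$\Rightarrow$(a): one must verify non-explosivity and the precise hypotheses of the Foster--Lyapunov theorem, and then extend the argument from the first moment to all moments, where the moment-closure structure of unimolecular kinetics is essential --- this is exactly the step where bimolecularity would destroy the closed linear form and block the same conclusion. The remaining equivalences are either purely linear-algebraic through Proposition~\ref{prop:stability} or follow directly from the definition of ergodicity.
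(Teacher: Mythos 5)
Your proposal is correct, but for the probabilistic direction it takes a genuinely different (more self-contained) route than the paper. The paper's own proof stays entirely at the level of the first-moment equation \eqref{eq:first-order}: it cites \cite{Briat:13i} for the fact that ergodic unimolecular networks have all moments converging, deduces (a)$\Rightarrow$(c) from global convergence of the linear moment ODE (this is where openness enters), and proves the converse by contraposition, arguing that an irreducible network can fail to be ergodic only through unbounded growth, i.e.\ divergent first moments, which is incompatible with $A$ Hurwitz; the Foster--Lyapunov machinery is thus entirely outsourced to \cite{Briat:13i}. You instead reconstruct that machinery explicitly: the drift computation $(\mathbb{A}V)(x)=v^T(Ax+b_0)\le -c_1 V(x)+c_2$ with $V(x)=v^Tx$, $c_1=\min_i|(v^TA)_i|/\max_i v_i$, is precisely the argument of \cite{Briat:13i} (via the criteria of \cite{Meyn:93}), and your moment-closure remark --- higher-order moments of a unimolecular network obey closed linear dynamics whose block-triangular/Kronecker-sum structure inherits Hurwitz stability from $A$ --- is the correct mechanism behind the moment statement the paper simply imports. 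Your version buys self-containedness, an explicit exponential rate, and non-explosivity for free from the same drift bound; the paper's version buys brevity and makes visible, in the contrapositive step, exactly where irreducibility is used (to exclude non-ergodic yet bounded behavior). Three further remarks. First, you route the (b)$\Leftrightarrow$(c) equivalence through Proposition \ref{prop:stability}, which is the right reference; the paper's text cites Proposition \ref{prop:outputcontrollability} at that point, evidently a slip. Second, in your (a)$\Rightarrow$(c) step you should state explicitly that uniqueness of the stationary distribution (plus convergence of moments) forces $\E[X(t)]$ to tend to the \emph{same} limit for every $x_0$; convergence from each $x_0$ to possibly different limits would not exclude a zero eigenvalue of $A$, so the spanning argument needs the common limit to conclude $e^{At}\to0$. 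Third, the all-moments upgrade is only sketched in your text, but the sketch matches the actual proof in \cite{Briat:13i}, so nothing essential is missing.
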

\begin{proof}
The equivalence between the two last statements follows from Proposition \ref{prop:outputcontrollability}. We also note that for ergodic unimolecular networks all moments globally converge to their unique fixed point; see \cite{Briat:13i}. Now assume that (a) holds. Using the fact that there is no conserved set of species, this implies that \eqref{eq:first-order} with $S_b=0$  globally converges to a unique equilibrium point. A necessary and sufficient condition for that is that $A$ be Hurwitz stable; i.e. (c) holds. To prove the converse, we assume that the network is non-ergodic. Since the state-space is irreducible then the network can only be non-ergodic if its trajectories grow unboundedly or, equivalently, the first-order moments diverge. This implies that $A$ must not be Hurwitz stable.  This proves the result.
\end{proof}
We then have the following generalization to bimolecular networks \cite{Briat:13i}:
\begin{theorem}[Ergodicity of bimolecular networks]\label{th:bi:erg}
  Let us consider an open irreducible bimolecular reaction network $(\Xz,\mathcal{R})$ with fixed rate parameters; i.e. $A=A(\rho_u)$ and $b_0=b_0(\rho_0)$. Assume that there exists a vector $v\in\mathbb{R}^d_{>0}$ such that $v^TS_b=0$ and $v^TA<0$. Then, the  reaction network $(\Xz,\mathcal{R})$ is exponentially ergodic and all the moments are bounded and converging.
\end{theorem}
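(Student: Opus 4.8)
The plan is to exhibit a linear Foster--Lyapunov function and reduce the claim to the geometric drift criterion of Meyn and Tweedie \cite{Meyn:93}. I would take $V(x):=v^Tx$, which is nonnegative on the state-space $\mathbb{Z}_{\ge0}^d$ since $v\in\mathbb{R}^d_{>0}$. Writing $\mathcal{A}$ for the (extended) generator of the underlying Markov process and using $V(x+\zeta_k)-V(x)=v^T\zeta_k$, the first step is the computation
\[
\mathcal{A}V(x)=\sum_{k=1}^K\lambda_k(x)\,v^T\zeta_k=v^TS\lambda(x).
\]
Substituting the block decompositions $\lambda(x)=\col(w_0(\rho_0),W(\rho_u)x,W_b(\rho_b,x))$ and $S=\begin{bmatrix}S_0 & S_u & S_b\end{bmatrix}$, and recalling $A=S_uW(\rho_u)$ and $b_0=S_0w_0(\rho_0)$ from Definition \ref{def:Ab}, this becomes
\[
\mathcal{A}V(x)=v^Tb_0+v^TAx+v^TS_bW_b(\rho_b,x).
\]

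The crucial step is the cancellation: the hypothesis $v^TS_b=0$ annihilates the quadratic contribution $v^TS_bW_b(\rho_b,x)$, leaving the exactly affine expression $\mathcal{A}V(x)=v^Tb_0+v^TAx$. Since $v^TA<0$ componentwise, writing $-v^TA=:w^T$ with $w\in\mathbb{R}^d_{>0}$ gives $v^TAx=-w^Tx\le-(\min_i w_i)\mathds{1}^Tx$, and because $v>0$ one has $\mathds{1}^Tx\ge(\max_i v_i)^{-1}v^Tx$. Hence $\mathcal{A}V(x)\le\beta-\alpha V(x)$ with positive constants $\alpha:=(\min_i w_i)/(\max_i v_i)$ and $\beta:=v^Tb_0$, which is precisely a geometric drift condition. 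Combined with the assumed irreducibility of the state-space (so that sublevel sets of $V$ are petite for an embedded skeleton chain) and the openness assumption, the Meyn--Tweedie theory yields that the process is positive Harris recurrent with a unique stationary distribution $\pi$ to which $p_{x_0}(\cdot,t)$ converges exponentially fast; this establishes exponential ergodicity.

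It remains to control the moments. For the first moment this is immediate: multiplying \eqref{eq:first-order} on the left by $v^T$ and using $v^TS_b=0$ again closes the equation, giving $\tfrac{d}{dt}(v^T\E[X(t)])=v^TA\E[X(t)]+v^Tb_0$, a scalar system with Hurwitz dynamics, so $v^T\E[X(t)]$ — and hence each nonnegative component $\E[X_i(t)]$ — is bounded and converging. The main obstacle is upgrading this to \emph{all} moments, since a linear test function governs only the first one; I would iterate the drift computation with the higher-order functions $V_m(x):=(v^Tx)^m$. Applying $\mathcal{A}$ to $V_m$ produces a leading term $m(v^Tx)^{m-1}(v^TAx)$ of the right (negative) sign together with lower-order remainders from the jumps, and the delicate point is verifying that the bimolecular propensities $W_b(\rho_b,x)$, quadratic in $x$, do not destabilize this recursion. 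Here one again leans on $v^TS_b=0$ to show that the highest-degree contribution of the second-order reactions to $\mathcal{A}V_m$ is dominated by the negative linear drift, so that each $V_m$ satisfies its own inequality $\mathcal{A}V_m\le\beta_m-\alpha_m V_m$; by induction this bounds $\E[(v^TX(t))^m]$ uniformly in $t$ and, with exponential ergodicity, forces convergence of every moment. This inductive moment-domination argument is where the real work lies, and it is exactly the place where the structural cancellation $v^TS_b=0$, rather than a mere stability assumption on $A$, is indispensable.
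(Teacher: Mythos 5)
Your proof is correct and follows essentially the intended argument: the paper states Theorem \ref{th:bi:erg} without proof, importing it from \cite{Briat:13i}, and the proof there is exactly your route --- the linear Foster--Lyapunov function $V(x)=v^Tx$, the cancellation of the quadratic propensities via $v^TS_b=0$, the geometric drift bound $\mathcal{A}V\le \beta-\alpha V$ (valid since $v>0$ makes sublevel sets of $V$ finite, hence petite for the irreducible chain), and the Meyn--Tweedie criterion for exponential ergodicity. One remark: the higher-moment step you flag as ``where the real work lies'' is in fact immediate, because $v^TS_b=0$ holds column by column, i.e.\ $v^T\zeta_k=0$ for \emph{every} bimolecular reaction $k$, so $V_m(x+\zeta_k)=V_m(x)$ exactly and the quadratic propensities contribute \emph{nothing} to $\mathcal{A}V_m$ --- no domination argument is needed; the surviving zeroth- and first-order reactions have affine propensities, so all remainder terms in the binomial expansion have degree at most $m-1$ and $\mathcal{A}V_m\le\beta_m-\alpha_m V_m$ follows at once. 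A minor imprecision: multiplying \eqref{eq:first-order} by $v^T$ does not ``close the equation'' into a scalar system, since $v^TA\,\E[X(t)]$ is not a function of $v^T\E[X(t)]$ alone; it closes only as the differential inequality $\tfrac{d}{dt}(v^T\E[X(t)])\le-\alpha\, v^T\E[X(t)]+v^Tb_0$, which still yields the boundedness you need.
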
}

\subsection{Antithetic integral control of unimolecular networks}

\dgreen{Antithetic integral control has been first proposed in \cite{Briat:15e} for solving the perfect adaptation problem in stochastic reaction networks. The underlying idea is to augment the open-loop network $(\Xz,\mathcal{R})$ with an additional set of species and reactions (the controller). The usual set-up is that this controller network acts on the production rate of the molecular species $\X{1}$ (the \emph{actuated species}) in order to steer the mean value of the \emph{controlled species} $\X{\ell}$, $\ell\in\{1,\ldots,d\}$, to a desired set-point (the reference) and ensure perfect adaptation for the controlled species. As proved in \cite{Briat:15e}, the antithetic integral control motif $(\Zz,\mathcal{R}^c)$ defined with
\begin{equation}\label{eq:AIC}
  \phib\rarrow{\mu}\Z{1},\ \phib\rarrow{\theta X_\ell}\Z{2}, \Z{1}+\Z{2}\rarrow{\eta}\phib, \phib\rarrow{kZ_1}\X{1}
\end{equation}
solves this control problem with the set-point being equal to $\mu/\theta$. Above, $\Z{1}$ and $\Z{2}$ are the \emph{controller species}. The four controller parameters $\mu,\theta,\eta,k>0$ are assumed to be freely assignable to any desired value. The first reaction is the \emph{reference reaction} as it encodes part of the reference value $\mu/\theta$ as its own rate. The second one is the \emph{measurement reaction} that produces the species $\Z{2}$ at a rate proportional to the current population of the controlled species $\X{\ell}$. The third reaction is the \emph{comparison reaction} as it compares the populations of the controller species and annihilates one molecule of each when these populations are both positive. Finally, the fourth reaction is the \emph{actuation reaction} that produces the actuated species $\X{1}$ at a rate proportional to the controller species $\Z{1}$. } \blue{The closed-loop reaction network consisting of the interconnection of the network $(\Xz,\mathcal{R})$ with the antithetic integral controller \eqref{eq:AIC} is denoted by $((\Xz,\Zz),\mathcal{R}\cup\mathcal{R}^c)$. Note also that the moments dynamics of the open loop network is therefore given by \eqref{eq:LPS} with $M=A(\rho_u)$, $b=e_1$ and $c=e_\ell$.}

We are now ready to state the main result of the section:
\begin{theorem}[\cite{Briat:15e}]\label{th:affine:nominal}
Let us consider an open unimolecular reaction network $(\Xz,\mathcal{R})$ with fixed characteristic matrix $A=A(\rho_u^0)$ and offset vector $b_0=b_0(\rho_0^0)$ for some nominal parameter values $\rho_0^0$ and $\rho_u^0$. Assume further that the closed-loop reaction network $((\Xz,\Zz),\mathcal{R}\cup\mathcal{R}^c)$ is irreducible. Then, the following statements are equivalent:
  \begin{enumerate}[(a)]
     \item The open-loop reaction network $(\Xz,\mathcal{R})$ is ergodic and the system  $(A,e_1,e_\ell^T)$ is output controllable.
    \item There exist vectors $v\in\mathbb{R}_{>0}^d$, $w\in\mathbb{R}_{\ge0}^d$, $w_1>0$, such that $v^TA <0$ and $w^TA +e_\ell^T=0$.
  \end{enumerate}
  Moreover, when one of the above statements holds, then the closed-loop reaction network $((\Xz,\Zz),\mathcal{R}\cup\mathcal{R}^c)$ is ergodic and we have that $\E[X_\ell(t)]\to\mu/\theta$ as $t\to\infty$  for any values for the controller rate parameters $\eta,k>0$ provided that
    \begin{equation}\label{eq:lowerbound}
      \dfrac{\mu}{\theta}>\dfrac{v^Tb_0}{\alpha e_\ell^Tv}
    \end{equation}
   where $\alpha>0$ and $v\in\mathbb{R}_{>0}^d$ verify $v^T(A+cI)\le0$.\mendth
\end{theorem}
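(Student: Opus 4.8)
The plan is to separate the equivalence of the two algebraic characterizations, which reduces to bookkeeping on the preliminary results, from the closed-loop statements, where the real analytic work lies.

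First I would prove (a) $\Leftrightarrow$ (b). By Theorem~\ref{th:uni:erg}, ergodicity of the open-loop unimolecular network is equivalent to $A$ being Hurwitz stable, i.e. to the existence of $v\in\mathbb{R}^d_{>0}$ with $v^TA<0$. For the output controllability of $(A,e_1,e_\ell^T)$ I would use statement~(c) of Proposition~\ref{prop:outputcontrollability}, which provides $w\in\mathbb{R}^d_{\ge0}$ and $\mu\ge0$ with $w^Te_1=w_1>0$ and $w^T(A-\mu I)+e_\ell^T=0$; because $A$ is already Hurwitz under~(a), the last sentence of that proposition lets me set $\mu=0$, leaving $w^TA+e_\ell^T=0$ with $w_1>0$. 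Collecting these facts gives~(b). Conversely, $v>0$ with $v^TA<0$ forces $A$ Hurwitz by Proposition~\ref{prop:stability}, hence open-loop ergodicity, while $w\ge0$, $w_1>0$, $w^TA+e_\ell^T=0$ is exactly Proposition~\ref{prop:outputcontrollability}(c) with $\mu=0$, hence output controllability. This direction is routine.

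The hard part will be the ergodicity of the closed-loop network $((\Xz,\Zz),\mathcal{R}\cup\mathcal{R}^c)$, which is genuinely nonlinear because the annihilation reaction $\Z{1}+\Z{2}\rarrow{\eta}\phib$ carries the bilinear propensity $\eta Z_1Z_2$, so the clean moment closure available in the unimolecular case is lost. Here I would follow \cite{Briat:15e} and establish a Foster--Lyapunov drift condition for the closed-loop generator, using a test function that combines the linear form $v^Tx$ certified by $v^TA<0$ with appropriate terms in $z_1,z_2$: the strict negativity of $v^TA$ drives the drift down for large $X$, while the comparison reaction confines the controller species. I expect this to be the main obstacle, and in the present paper it is imported from \cite{Briat:15e} rather than reproved.

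Granting ergodicity, so that stationary moments exist and the moment equations hold at stationarity, I would extract the set-point and the bound \eqref{eq:lowerbound} from the first-moment balance. Taking stationary expectations in \eqref{eq:first-order} (with $S_b=0$) augmented by the actuation term yields $A\bar{x}+b_0+k\bar{z}_1e_1=0$ with $\bar{x}=\E_\pi[X]$ and $\bar{z}_1=\E_\pi[Z_1]$, while the $\Z{1}$- and $\Z{2}$-balances read $\mu=\eta\E_\pi[Z_1Z_2]$ and $\theta\bar{x}_\ell=\eta\E_\pi[Z_1Z_2]$. Subtracting the latter two cancels the bilinear term and gives $\bar{x}_\ell=\mu/\theta$ independently of $\eta$ and $k$, which is the integral/perfect-adaptation mechanism. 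Multiplying the $X$-balance on the left by the vector $w^T=-e_\ell^TA^{-1}\ge0$ from~(b) and using $w^TA=-e_\ell^T$ and $w_1>0$ produces $\mu/\theta=w^Tb_0+kw_1\bar{z}_1$, so the stationary controller mean $\bar{z}_1=(\mu/\theta-w^Tb_0)/(kw_1)$ is nonnegative---as a population mean must be, and as ergodicity requires---precisely when $\mu/\theta>w^Tb_0$. The stated bound \eqref{eq:lowerbound} is the form this feasibility condition takes after majorizing $w^Tb_0$ through the ergodicity certificate $(v,\alpha)$; identifying the exact constant relating $w^Tb_0$ to $v^Tb_0/(\alpha e_\ell^Tv)$ is the remaining piece of bookkeeping.
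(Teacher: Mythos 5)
Your proposal is correct and matches the paper's treatment: the paper states this theorem as a result imported from \cite{Briat:15e} (the closed-loop Foster--Lyapunov ergodicity argument is not reproved here, exactly as you anticipate), and your proof of (a) $\Leftrightarrow$ (b) via Theorem \ref{th:uni:erg} and Proposition \ref{prop:outputcontrollability} with $\mu=0$ under Hurwitz stability is precisely the route the paper follows for its interval, robust, sign, and structural analogues (Theorems \ref{th:affine:interval} and \ref{th:affine:robust}, whose concluding statements are likewise ``adaptations'' of this one). The bookkeeping you defer closes as expected: from $w^T=-e_\ell^TA^{-1}$, the componentwise bounds $e_\ell\le v/(e_\ell^Tv)$ and $v^T(-A^{-1})\le v^T/\alpha$ (the latter from $v^T(A+\alpha I)\le0$ and $-A^{-1}\ge0$) yield $w^Tb_0\le v^Tb_0/(\alpha\, e_\ell^Tv)$, so \eqref{eq:lowerbound} indeed implies the feasibility condition $\mu/\theta>w^Tb_0$ you derived from the stationary balance.
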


\begin{remark}
  Interestingly, the conditions stated in the above result can be numerically verified by checking the feasibility of the following linear program
  \begin{equation}\label{feas:nom}
    \begin{array}{rcl}
      \textnormal{Find} && v\in\mathbb{R}_{>0}^d,\ w\in\mathbb{R}_{\ge0}^d\\
      \textnormal{s.t.}&&w^Te_1>0,v^TA<0,w^TA+e_\ell^T=0.
    \end{array}
  \end{equation}
\end{remark}

\section{Notions of ergodicity and output-controllability for uncertain unimolecular networks}\label{sec:extensions}

\blue{We address two main families of parameters. The first one is that of bounded parameter values
\begin{equation}
\mathcal{P}_u\subset\mathbb{R}_{\ge0}^{n_u}
\end{equation}
where $\mathcal{P}_u$'s is the cartesian product of connected intervals, which are not necessarily closed. The second type is that of unbounded parameter values
\begin{equation}
\mathcal{P}_u^\infty:=\mathbb{R}_{>0}^{n_u}.
\end{equation}
Depending on the considered type of parameters, different concepts can be defined. Those concepts are summarized in Table \ref{tab:frameworks}.}

\begin{table}[h]
\center
\blue{\caption{List of the considered different concepts with their exactness and domain of definition.}\label{tab:frameworks}
\begin{tabular}{|c||c|c|}
\hline
   & Bounded parameters & Unbounded parameters\\
   \hline
   \hline
   Approximated model & Interval concepts & Sign concepts\\
   \hline
   Exact model & Robust concepts & Structural concepts\\
   \hline
\end{tabular}}
\end{table}

\subsection{Interval ergodicity and output-controllability}

\blue{In this case, we assume that the characteristic matrix of the network belongs to the set
\begin{equation}\label{eq:calA}
  \mathcal{A}:=\left\{M\in\mathbb{R}^{d\times d}:\ A^-\le M\le A^+\right\},\ A^-\le A^+,
\end{equation}
where the matrices $A^-$ and $A^+$ verify $  A^-\le A(\rho_u)\le A^+$ holds for all $\rho_u\in\mathcal{P}_u$. In other words, we have that
\begin{equation}\label{eq:intervalmodel}
  \{A(\rho_u):\rho_u\in\mathcal{P}_u\}\subset \mathcal{A}.
\end{equation}
Alternatively, we can define the interval matrix $[A]$ such as $e_i^T[A]e_j=[a^-_{ij},a^+_{ij}]$; i.e. its $(i,j)$'s element is an interval.

We can then define the concepts of interval ergodicity and interval output-controllability:
\begin{define}[Interval ergodicity]
The unimolecular network $(\Xz,\mathcal{R})$ with interval characteristic matrix $[A]$  is interval (exponentially) ergodic if for each $A\in\mathcal{A}$, the network with characteristic matrix $A$ is (exponentially) ergodic.
\end{define}
\begin{define}[Interval output-controllability]
The linear interval system $([A],e_i,e_j^T)$ is interval output-controllable if for each $A\in\mathcal{A}$, the system $(A,e_i,e_j^T)$ is output-controllable.
\end{define}}

\subsection{Robust ergodicity and output-controllability}

\blue{The robust case considers the exact parameter dependence of the characteristic matrix. This leads to the following concepts of robust ergodicity and robust output-controllability:
\begin{define}[Robust ergodicity]
The unimolecular network $(\Xz,\mathcal{R})$ with parameter-dependent characteristic matrix $A(\rho_u)$, $\rho_u\in\mathcal{P}_u$,  is robustly (exponentially) ergodic if for each $\theta\in\mathcal{P}_u$, the network with characteristic matrix $A(\theta)$ is (exponentially) ergodic.
\end{define}
\begin{define}[Robust output-controllability]
The linear system $(A(\rho_u),e_i,e_j^T)$, $\rho_u\in\mathcal{P}_u$,  is robustly output-controllable if for each $\theta\in\mathcal{P}_u$, the system $(A(\theta),e_i,e_j^T)$ is output-controllable.
\end{define}}

\subsection{Sign ergodicity and output-controllability}

The sign ergodicity addresses the ergodicity of all the reaction networks having a characteristic matrix sharing the same given sign pattern. We define now the set of \emph{sign symbols} $\mathbb{S}:=\{0,\oplus,\ominus\}$. A \emph{sign-matrix} is a matrix with entries in $\mathbb{S}$ and the qualitative class $\mathcal{Q}(\Sigma)$ of a sign-matrix $\Sigma\in\mathbb{S}^{m\times n}$ is defined as
\begin{equation}
  \mathcal{Q}(\Sigma):=\left\{M\in\mathbb{R}^{m\times n}:\ \sgn(M)=\sgn(\Sigma)\right\}
\end{equation}
where the signum function $\sgn(\cdot)$ is defined as
\begin{equation}
    [\sgn(\Sigma)]_{ij}:=\left\{\begin{array}{ll}
                            1&\textnormal{if}\ \Sigma_{ij}\in\mathbb{R}_{>0}\cup\{\oplus\},\\
                            -1&\textnormal{if}\ \Sigma_{ij}\in\mathbb{R}_{<0}\cup\{\ominus\},\\
                            0&\textnormal{otherwise}.
                            \end{array}\right.
\end{equation}
\blue{Starting from $A(\rho_u)$, we can build the associated sign-pattern as $S_A=A(\oplus)$ where $A(\oplus)$ stands for the matrix where we have replaced all the parameters by $\oplus$ and used the arithmetic rules $-\oplus=\ominus$ and $\oplus+\oplus=\oplus$. Under the assumption that the network does not involve any conversion reactions, we have that
\begin{equation}
  \{A(\rho_u):\rho_u\in\mathcal{P}_u^\infty\}\subset \mathcal{Q}(S_A).
\end{equation}
 This leads to the following concepts of sign ergodicity and sign output-controllability:
\begin{define}
Assume that the unimolecular network $(\Xz,\mathcal{R})$ with characteristic sign-matrix $S_A$ does not contain any conversion reaction. Then, it is sign (exponentially) ergodic if for each $A\in\mathcal{Q}(S_A)$,  the network with characteristic matrix $A$ is (exponentially) ergodic.
\end{define}
\begin{define}
The linear sign system $(S_A,e_i,e_j^T)$ is sign output controllable if for each $A\in\mathcal{Q}(S_A)$, the system $(A,e_i,e_j^T)$ is output-controllable.
\end{define}}

\subsection{Structural ergodicity and output-controllability}

\blue{The structural case considers the exact parameter dependence of the characteristic matrix. This leads to the following concepts of structural ergodicity and structural output-controllability:
\begin{define}
The unimolecular network $(\Xz,\mathcal{R})$ with parameter-dependent characteristic matrix $A(\rho_u)$, $\rho_u\in\mathcal{P}_u^\infty$, is structurally (exponentially) ergodic if for each $\theta\in\mathcal{P}_u^\infty$, the network with characteristic matrix $A(\theta)$ is (exponentially) ergodic.
\end{define}
\begin{define}
The linear parameter-dependent system $(A(\rho_u),e_i,e_j^T)$, $\rho_u\in\mathcal{P}_u^\infty$, is structurally output-controllable if for each $\theta\in\mathcal{P}_u^\infty$, the system $(A(\theta),e_i,e_j^T)$ is output-controllable.
\end{define}}

\subsection{Equivalence between the concepts}

\blue{It seems interesting to start with some illustrative examples. The first one illustrates the impact of conversion reactions on the tightness of the interval and sign approximation.
\begin{example}[Hurwitz stability]
Let us consider the matrix
\begin{equation}
  A(\rho_u)=\begin{bmatrix}
    -\rho^1-\rho^2 & \rho^4\\
    \rho^2 & -\rho^3-\rho^4
  \end{bmatrix}, \rho_u=(\rho^1,\rho^2,\rho^3,\rho^4)
\end{equation}
where the rates $\rho^2$ and $\rho^4$ are rates of conversion reactions. This matrix is Hurwitz stable for all positive parameter values, hence it is structurally. It is also robustly stable provided the diagonal elements are bounded away from 0. However, if we pick $\rho^1=\rho^3=1$ and $\rho_2,\rho_4\in[1,3]$, then it  is not interval stable since
\begin{equation}
  A^+=\begin{bmatrix}
    -2  & 4\\
    4 & -2
  \end{bmatrix}
\end{equation}
is not Hurwitz stable. Similarly, the sign representation given by
\begin{equation}
 A(\oplus)=\begin{bmatrix}
    \ominus & \oplus\\
    \oplus & \ominus
  \end{bmatrix}
\end{equation}
is not sign-stable.
\end{example}

The second one addresses the case where the set of values of the parameters is not closed.
\begin{example}
  Let us consider the following matrix
  \begin{equation}
    A=\begin{bmatrix}
      -1 & 0\\
      \rho_1 & 0
    \end{bmatrix},b=e_1,c=e_2.
  \end{equation}
  Clearly, the system $(A,b,c)$ is structurally output-controllable. However, it is not interval observable if one considers that $\rho\in(0,1]$, since the matrix $A^-$ will have 0 as bottom-left entry.
  \end{example}

The above remarks are formalized below:
\begin{proposition}
  Assume that the network $(\Xz,\mathcal{R})$ is unimolecular. Then, the following statements are equivalent:
  \begin{enumerate}[(a)]
    \item There is no conversion reactions in the network $(\Xz,\mathcal{R})$ and the set $\mathcal{P}_u$ is closed.
    \item We have that $\mathcal{A}=\{A(\rho_u):\rho_u\in\mathcal{P}_u\}$.
  \end{enumerate}
\end{proposition}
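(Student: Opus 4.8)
The plan is to reduce the statement to a separability property of the linear map $\rho_u\mapsto A(\rho_u)$ and then exploit compactness. First I would record the structural identity
\[
A(\rho_u)=\sum_{k}\rho^k\,\zeta_k\,e_{s_k}^T,
\]
where the sum runs over the first-order reactions, $s_k$ denotes the unique reactant species of $\mathcal R_k$, and $e_{s_k}$ the corresponding basis vector; this follows from $A(\rho_u)=S_uW(\rho_u)$ together with $\lambda_k(x)=\rho^k x_{s_k}$. Reading off the entries gives $e_m^TA(\rho_u)e_n=\sum_{k:\,s_k=n}(\zeta_k)_m\,\rho^k$, so each entry is a homogeneous linear function of $\rho_u$ and the rate $\rho^k$ influences precisely the entries $(m,s_k)$ for which $(\zeta_k)_m\neq0$. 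I would take $A^\pm$ to be the tight componentwise hull, $a^-_{mn}:=\inf_{\rho_u\in\mathcal P_u}e_m^TA(\rho_u)e_n$ and $a^+_{mn}:=\sup_{\rho_u\in\mathcal P_u}e_m^TA(\rho_u)e_n$, so that $\mathcal A=\prod_{m,n}[a^-_{mn},a^+_{mn}]$. The crux of the whole proof is the observation that, for a unimolecular network, the absence of conversion reactions is equivalent to every stoichiometric vector $\zeta_k$ having a single nonzero component, i.e. to every rate $\rho^k$ influencing exactly one entry; under this hypothesis the supports $P_{mn}:=\{k:\,s_k=n,\ (\zeta_k)_m\neq0\}$ partition the reaction index set and the map $\rho_u\mapsto A(\rho_u)$ becomes separable, distinct entries depending on disjoint blocks of coordinates.

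For (a)$\Rightarrow$(b) I would then argue as follows. Since $\mathcal P_u\subset\mathbb{R}_{\ge0}^{n_u}$ is bounded and, by hypothesis, closed, it is compact and equals the Cartesian product of its (now closed) coordinate intervals. For each entry $(m,n)$ the restriction $\rho_{P_{mn}}\mapsto e_m^TA(\rho_u)e_n$ is continuous on the compact connected box $\prod_{k\in P_{mn}}[\rho^k_-,\rho^k_+]$, hence maps it onto the closed interval $[a^-_{mn},a^+_{mn}]$ by connectedness and compactness. Because the blocks $P_{mn}$ are disjoint, the global map is the Cartesian product of these surjections, so its image is $\prod_{m,n}[a^-_{mn},a^+_{mn}]=\mathcal A$, which is exactly $\{A(\rho_u):\rho_u\in\mathcal P_u\}=\mathcal A$.

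For (b)$\Rightarrow$(a) I would prove the contrapositive, splitting the failure of (a) into two cases. If $\mathcal P_u$ is not closed, then some coordinate interval omits an endpoint; that rate $\rho^k$ enters at least one entry with a nonzero coefficient, so the corresponding extreme value $a^+_{mn}$ (or $a^-_{mn}$) is a supremum (infimum) that is approached but never attained, whence the boundary matrix of $\mathcal A$ carrying that entry at its extreme lies in $\mathcal A$ but not in the image. If instead a conversion reaction $\X i\to\X j$ with rate $\rho^k$ is present, then $\rho^k$ occurs with coefficient $-1$ in the diagonal entry $(i,i)$ and with coefficient $+1$ in the off-diagonal entry $(j,i)$; attaining $a^+_{ii}$ forces $\rho^k$ to its lower end while attaining $a^+_{ji}$ forces it to its upper end, which is impossible once the interval of $\rho^k$ is nondegenerate. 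Hence the corner of $\mathcal A$ with both entries at their maxima is unattainable and the image is a proper subset of $\mathcal A$. In either case (b) fails, which completes the contrapositive.

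The step I expect to be the main obstacle is the structural equivalence underpinning the crux, namely pinning down the definition of a conversion reaction so that ``no conversion reaction'' coincides exactly with ``each $\zeta_k$ has at most one nonzero component'': a reaction such as $\X i\to\X j$ with an additional product also couples several entries, so one must either read ``conversion'' in this broader sense of a net change affecting more than one species, or invoke the paper's standing restriction on the admissible reaction types. A secondary subtlety, to be handled in the conversion case, is the degenerate situation in which a conversion rate is held at a single fixed value: then no coupling gap arises, so the argument (and the intended statement) tacitly assumes that the uncertain rates range over nondegenerate intervals.
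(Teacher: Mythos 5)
Your proof is correct, and on the interesting direction it uses the same mechanism as the paper, but it is substantially more complete than the paper's own argument. The paper's proof is a two-sentence sketch of (b)$\Rightarrow$(a) only: a conversion reaction places one parameter in two entries with opposite signs (on the diagonal and in the same column), so the tight bounds $A^-,A^+$ can never be attained by any $A(\rho_u)$, and non-closedness of $\mathcal{P}_u$ likewise prevents attainment of the bounds; the forward implication (a)$\Rightarrow$(b) is not argued at all. You supply exactly what is missing there: the rank-one decomposition $A(\rho_u)=\sum_k\rho^k\zeta_k e_{s_k}^T$, the observation that absence of conversions makes the entry maps separable across disjoint parameter blocks $P_{mn}$, and the compactness/connectedness argument showing each entry map is onto its closed interval, so the image of the product box is the full box $\mathcal{A}$ (your explicit choice of the tight hull $a^{\pm}_{mn}$ via inf/sup is also the right reading, since with loose bounds the equality in (b) is vacuously false). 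Your two caveats are genuine rather than pedantic. First, under the paper's implicit taxonomy in Section \ref{sec:robust} (conversion $=$ mixed-sign column of $S_u$), a catalytic reaction with two products such as $\X{i}\rightarrow\X{i}+\X{j}+\X{l}$ has a nonnegative stoichiometric column yet couples the entries $(j,i)$ and $(l,i)$ through a single rate, so the literal statement fails unless ``conversion'' is read in your broader sense of a net change affecting several species, or the admissible unimolecular reactions are restricted to the canonical degradation/conversion/single-product-catalysis types. Second, if a conversion rate ranges over a degenerate single-point interval, its contributions to both affected entries are constants, the image can then equal $\mathcal{A}$, and the stated equivalence fails; the paper's claim that ``there is no $\rho_u$ with $A(\rho_u)=A^-$'' tacitly assumes nondegenerate uncertainty intervals, which your write-up makes explicit.
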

\begin{proof}
  Clearly, if there are conversion reactions, then the sets do not match since the bounds $A^-,A^+$ will not be tight due to the presence of the same parameters in different entries; i.e both on the diagonal and in the corresponding columns. Indeed, in such a case, there is no $\rho_u\in\mathcal{P}_u$ such that $A(\rho_u)=A^-$ or $A(\rho_u)=A^-$. Additionally, if the set $\mathcal{P}_u$ is not closed, the bounds are never attained. This proves the result.
\end{proof}

The following result provides conditions under which the sign formulation is exact:
\begin{proposition}
  Assume that the network $(\Xz,\mathcal{R})$ is unimolecular.  The following statements are equivalent:
  \begin{enumerate}[(a)]
    \item There is no conversion reaction in the reaction network $(\Xz,\mathcal{R})$.
    \item  We have that $\{A(\rho_u):\rho_u\in\mathbb{R}_{>0}^{n_u}\}=\mathcal{Q}(S_A)$.
  \end{enumerate}
\end{proposition}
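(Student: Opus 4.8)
The plan is to prove the two inclusions $\{A(\rho_u):\rho_u\in\mathbb{R}_{>0}^{n_u}\}\subseteq\mathcal{Q}(S_A)$ and $\mathcal{Q}(S_A)\subseteq\{A(\rho_u):\rho_u\in\mathbb{R}_{>0}^{n_u}\}$ while tracking exactly which reactions feed which entries of the characteristic matrix. The starting point is the expansion $A(\rho_u)=S_uW(\rho_u)=\sum_k\rho^k\zeta_ke_{i(k)}^\transp$, where the sum runs over the first-order reactions, $\rho^k$ is the rate of $\mathcal{R}_k$, $i(k)$ is the index of its unique reactant species, and $\zeta_k$ its stoichiometric vector. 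Thus the $(m,n)$ entry is $e_m^\transp A(\rho_u)e_n=\sum_{k:\,i(k)=n}\rho^k\,\zeta_{k,m}$, i.e.\ it collects the contributions of all reactions consuming $\X{n}$. For $m\neq n$ one has $\zeta_{k,m}=\zeta_{k,m}^r\ge0$, so every off-diagonal entry is a nonnegative combination of the rates, while a degradation or a conversion out of $\X{n}$ contributes $\zeta_{k,n}=-1$ to the diagonal. Under the standing assumptions this shows that each entry of $A(\rho_u)$ carries, for every positive choice of rates, precisely the sign prescribed by $S_A$, which gives the inclusion $\{A(\rho_u)\}\subseteq\mathcal{Q}(S_A)$ unconditionally, i.e.\ independently of (a).

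For (a)$\Rightarrow$(b) I would argue the reverse inclusion under the no-conversion hypothesis. In the absence of conversions, the only reactions consuming $\X{n}$ are degradations $\X{n}\to\phib$, which feed the rate $-\rho$ to the diagonal entry $A_{nn}$ only, and catalytic productions $\X{n}\to\X{n}+\X{m}$, which feed $+\rho$ to the off-diagonal entry $A_{mn}$ only. Hence the diagonal of a column and its off-diagonal entries are driven by disjoint sets of rate parameters, and distinct nonzero entries are controlled by distinct reactions. Given any target $M\in\mathcal{Q}(S_A)$, one may therefore assign the rates entry by entry: for each nonzero entry pick one feeding reaction and set its rate so that the entry equals the prescribed value of $M$, distributing the value among the remaining feeding reactions (or sending their rates to arbitrarily small positive values). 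This produces a $\rho_u\in\mathbb{R}_{>0}^{n_u}$ with $A(\rho_u)=M$, establishing $\mathcal{Q}(S_A)\subseteq\{A(\rho_u)\}$ and, together with the first inclusion, the equality (b).

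For (b)$\Rightarrow$(a) I would proceed by contraposition and exploit the parameter sharing created by a conversion, exactly as in the companion interval result. If $\X{i}\to\X{j}$ is a conversion with rate $\rho^c$, then $\rho^c$ enters $A_{ii}$ with coefficient $-1$ and $A_{ji}$ with coefficient $+1$; writing $A_{ii}=-\rho^c-(\text{nonnegative terms})$ and $A_{ji}=\rho^c+(\text{nonnegative terms})$ exhibits a linear coupling between the diagonal magnitude and the off-diagonal entry of column $i$ through the single shared rate $\rho^c$. Consequently these two entries cannot be assigned fully independently, so the image $\{A(\rho_u)\}$ is a proper subset of the qualitative class $\mathcal{Q}(S_A)$; in the simplest case this can be made explicit through the inequality $A_{ji}\le|A_{ii}|$ together with a matrix $M\in\mathcal{Q}(S_A)$ violating it. This contradicts (b) and completes the equivalence.

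The step I expect to be the main obstacle is precisely this converse, reverse inclusion: one must certify that the coupling forced by the shared rate $\rho^c$ genuinely removes some sign-consistent matrices from the image, rather than being undone by other reactions out of $\X{i}$ (for instance additional catalytic productions of $\X{j}$, or further degradations of $\X{i}$) that might restore independent control of $A_{ii}$ and $A_{ji}$. Making the argument airtight therefore requires a careful column-wise bookkeeping of which rate parameters are shared between the diagonal and the off-diagonal positions, so as to isolate the affine relation that the conversions impose on each column. By contrast, the $\subseteq$ inclusion and the realizability argument underlying (a)$\Rightarrow$(b) are routine once the entry-by-entry description of $A(\rho_u)$ is in place.
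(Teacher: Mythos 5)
Your forward direction (a)$\Rightarrow$(b) is sound and considerably more detailed than the paper's own proof, which consists of a single sentence asserting that, absent conversion reactions, ``all the entries in the matrix are independent'' so that the sign approach becomes non-conservative. Your entrywise expansion $e_m^{\transp}A(\rho_u)e_n=\sum_{k:\,i(k)=n}\rho^k\zeta_{k,m}$ and the explicit rate assignment realizing any $M\in\mathcal{Q}(S_A)$ supply exactly the bookkeeping the paper leaves implicit. One caveat applies equally to your ``unconditional'' inclusion $\{A(\rho_u)\}\subseteq\mathcal{Q}(S_A)$ and to the paper: autocatalytic first-order reactions $\X{n}\to 2\X{n}$ count as catalytic in the paper's classification (their stoichiometric column is nonnegative) yet feed the diagonal positively --- compare the $(3,3)$ entry $-\rho_{dg}^3-\rho_{cv}^2+\rho_{ct}^4$ in the paper's last example --- so a diagonal entry can mix signs even without conversions; both your argument and the proposition tacitly exclude this situation (and, similarly, first-order reactions producing two distinct species, which would make one rate feed two off-diagonal entries).

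The genuine problem is the converse (b)$\Rightarrow$(a), and the obstacle you flagged is not a technical loose end but fatal: the implication is false as stated, so no refinement of your column-wise bookkeeping can close it. Take $d=2$ with reactions $\X{1}\to\X{2}$ (conversion, rate $\rho^1$), $\X{1}\to\phib$ (rate $\rho^2$), $\X{1}\to\X{1}+\X{2}$ (catalytic, rate $\rho^3$) and $\X{2}\to\phib$ (rate $\rho^4$). Then
\begin{equation*}
A(\rho_u)=\begin{bmatrix}-\rho^1-\rho^2 & 0\\ \rho^1+\rho^3 & -\rho^4\end{bmatrix},\qquad S_A=\begin{bmatrix}\ominus & 0\\ \oplus & \ominus\end{bmatrix},
\end{equation*}
and for any $M\in\mathcal{Q}(S_A)$ the strictly positive choice $\rho^1=\tfrac{1}{2}\min\{|M_{11}|,M_{21}\}$, $\rho^2=|M_{11}|-\rho^1$, $\rho^3=M_{21}-\rho^1$, $\rho^4=|M_{22}|$ gives $A(\rho_u)=M$. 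Hence (b) holds although the network contains a conversion: the parallel catalytic production together with the degradation of $\X{1}$ restores independent control of $A_{11}$ and $A_{21}$, which is precisely the compensation mechanism you anticipated, and your envisaged certificate $A_{21}\le|A_{11}|$ only survives when the conversion rate is the sole rate feeding one of the two coupled entries. Note that the paper's one-line proof does not address this direction at all, so your honest flag in fact exposes a defect in the proposition itself rather than merely in your attempt; a correct converse would need a stronger hypothesis, for instance that some entry of $A(\rho_u)$ is fed exclusively by conversion rates shared with another entry.
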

\begin{proof}
  If there is no conservation reaction, then all the entries in the matrix are independent and, therefore, the sign approach becomes non-conservative.
\end{proof}}

\subsection{Extensions to more general networks}

It is interesting to discuss whether those concepts extend to some bimolecular networks. The robust and structural definitions can be extended to any type of reaction networks as those definitions lie at the level of the reaction rates. The other definitions can be adapted to a class of bimolecular networks through the use of Theorem \ref{th:bi:erg}. Note that the sign-ergodicity of bimolecular networks has been addressed in \cite[Proposition 8.8]{Briat:17LAA} through the concept of $\textnormal{Ker}_+(B)$-sign-stability.

\section{Interval results}\label{sec:interval}

The objective of this section is to develop interval-analogues of the ergodicity and output-controllability results of Section \ref{sec:prel}.

\subsection{Interval ergodicity of unimolecular reaction networks}

\blue{Let us consider set of matrices \eqref{eq:calA} where the bounds $A^-,A^+$ have been determined such that \eqref{eq:intervalmodel} holds. Then, we have the following result:
\begin{proposition}\label{lem:frob}
Let us consider an irreducible open unimolecular reaction network $(\X{},\mathcal{R})$ with interval characteristic matrix $[A]$. Then, the following statements are equivalent:
\begin{enumerate}[(a)]
  \item The network $(\Xz,\mathcal{R})$ is interval (exponentially) ergodic;
  \item All the matrices in $\mathcal{A}$ are Hurwitz stable or, equivalently, for any $M\in\mathcal{A}$, there exists a $v=v(M)\in\mathbb{R}^d_{>0}$ such that $v^TM<0$;
  \item The matrix $A^+$ is Hurwitz stable or, equivalently, there exists a vector $v_+\in\mathbb{R}^d_{>0}$ such that $v_+^TA^+<0$.
\end{enumerate}
Moreover, when one of the above statements holds, then the reaction network $(\Xz,\mathcal{R})$ is robustly ergodic.
\end{proposition}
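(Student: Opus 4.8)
The plan is to prove the cycle (a) $\Leftrightarrow$ (b) $\Leftrightarrow$ (c) and then read off the robust-ergodicity addendum from the inclusion \eqref{eq:intervalmodel}. The equivalence (a) $\Leftrightarrow$ (b) is essentially a restatement of the definition combined with the unimolecular ergodicity criterion: the network is interval (exponentially) ergodic precisely when, for every $M\in\mathcal{A}$, the unimolecular network with characteristic matrix $M$ is ergodic, and applying Theorem \ref{th:uni:erg} to each such $M$ (openness and irreducibility being inherited from the original network) shows this holds iff every $M\in\mathcal{A}$ is Hurwitz stable, which by Proposition \ref{prop:stability} is equivalent to the existence of $v=v(M)\in\mathbb{R}^d_{>0}$ with $v^TM<0$. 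So (a) $\Leftrightarrow$ (b) needs no real work beyond quoting these two results.

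The substantive step is (b) $\Leftrightarrow$ (c). One direction is immediate: since $A^-\le A^+\le A^+$ we have $A^+\in\mathcal{A}$, so (b) forces $A^+$ to be Hurwitz, giving (c). For the converse I would avoid any spectral-abscissa monotonicity argument and instead exhibit a single common certificate. First I would note that every $M\in\mathcal{A}$ is Metzler: the off-diagonal entries of $A^-$ are nonnegative because they lower-bound the off-diagonal entries of the Metzler matrices $A(\rho_u)$, hence so are those of any $M$ with $A^-\le M$. Assuming (c), Proposition \ref{prop:stability} yields $v_+\in\mathbb{R}^d_{>0}$ with $v_+^TA^+<0$. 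Then for an arbitrary $M\in\mathcal{A}$ the entrywise inequality $M\le A^+$ together with $v_+>0$ gives $v_+^T(A^+-M)\ge0$, whence $v_+^TM\le v_+^TA^+<0$. Since $M$ is Metzler, Proposition \ref{prop:stability} certifies that $M$ is Hurwitz stable, establishing (b). A bonus of this argument is that the same vector $v_+$ serves uniformly over the whole family $\mathcal{A}$.

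Finally, the robust-ergodicity claim is a direct corollary: by \eqref{eq:intervalmodel} we have $A(\theta)\in\mathcal{A}$ for every $\theta\in\mathcal{P}_u$, so any of the equivalent statements forces each $A(\theta)$ to be Hurwitz stable and hence, by Theorem \ref{th:uni:erg}, the network with characteristic matrix $A(\theta)$ to be exponentially ergodic, which is exactly robust ergodicity.

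I do not expect a serious obstacle. The two points requiring care are (i) verifying that $\mathcal{A}$ consists of Metzler matrices, so that Proposition \ref{prop:stability} is applicable, and (ii) resisting the temptation to argue matrix-by-matrix and instead noticing that the positivity of $v_+$ converts the entrywise bound $M\le A^+$ into the scalar bound $v_+^TM\le v_+^TA^+$. This is the key simplification: it collapses the entire interval family to the single vertex matrix $A^+$, which is why checking interval ergodicity reduces to a single linear program in $v_+$.
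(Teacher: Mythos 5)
Your proof is correct, but the key implication (c) $\Rightarrow$ (b) is handled by a genuinely different mechanism than in the paper. The paper's proof invokes the monotonicity of the Frobenius (Perron) eigenvalue of Metzler matrices under the entrywise order: from $M\le A^+$ it deduces $\lambda_F(M)\le\lambda_F(A^+)<0$ for every $M\in\mathcal{A}$, citing \cite{Berman:94}. You instead propagate a single linear Lyapunov certificate: taking $v_+\in\mathbb{R}^d_{>0}$ with $v_+^TA^+<0$ via Proposition \ref{prop:stability}, the positivity of $v_+$ converts the entrywise bound $M\le A^+$ into the scalar bound $v_+^TM\le v_+^TA^+<0$, and Proposition \ref{prop:stability} applied to the Metzler matrix $M$ then certifies Hurwitz stability. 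Your route is more elementary --- it uses nothing beyond the equivalences already recorded in Proposition \ref{prop:stability} --- and it buys the extra observation that one common vector $v_+$ certifies the entire family uniformly, which is precisely what makes the linear program \eqref{feas:int} sufficient; the paper's route is shorter given the cited spectral fact and additionally yields the quantitative eigenvalue comparison $\lambda_F(M)\le\lambda_F(A^+)$. One shared caveat deserves a remark: both arguments need every $M\in\mathcal{A}$ to be Metzler, equivalently that $A^-$ is Metzler. Your stated justification (that the off-diagonal entries of $A^-$ are nonnegative because they lower-bound those of the Metzler matrices $A(\rho_u)$) is not literally valid, since an arbitrary lower bound of nonnegative numbers may be negative; it becomes correct under the natural entrywise-tight reading of $[A]$ used in the paper, where $a^-_{ij}$ is the infimum of the nonnegative off-diagonal entries and hence nonnegative. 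The remaining steps --- (a) $\Leftrightarrow$ (b) via Theorem \ref{th:uni:erg}, (b) $\Rightarrow$ (c) from $A^+\in\mathcal{A}$, and the robust-ergodicity addendum via the inclusion \eqref{eq:intervalmodel} --- coincide with the paper's.
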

\begin{proof}
The proof that (a) is equivalent to (b) simply follows from Theorem \ref{th:uni:erg}. The proof that (b) implies (c) is immediate. The converse can be proved using the fact that for two Metzler matrices $M_1,M_2\in\mathbb{R}^{d\times d}$ verifying the inequality $M_1\le M_2$, we have that $\lambda_F(M_1)\le\lambda_F(M_2)$ where $\lambda_F(\cdot)$ denotes the Frobenius eigenvalue (see e.g. \cite{Berman:94}). Hence, we have that $\lambda_F(M)\le\lambda_F(A^+)<0$ for all $M\in\mathcal{A}$. The conclusion then readily follows.
\end{proof}}

\subsection{Interval ergodicity of bimolecular reaction networks}

We now provide an extension of the conditions of Theorem \ref{th:bi:erg} for bimolecular networks to the case of uncertain networks described by uncertain matrices:
\begin{proposition}\label{prop:interval:ergodicity}
  Let us consider an uncertain open irreducible bimolecular reaction network $(\X{},\mathcal{R})$ with interval characteristic matrix $[A]$ and assume that there exists a vector $v\in\mathbb{R}^d_{>0}$ such that
  \begin{equation}
    v^TA^+<0\textnormal{ and }v^TS_b=0.
  \end{equation}
  Then, the stochastic reaction network $(\X{},\mathcal{R})$ is robustly exponentially ergodic for all $A\in[A^-,A^+]$.
\end{proposition}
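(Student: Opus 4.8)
The plan is to reduce this interval statement to the fixed-parameter result of Theorem \ref{th:bi:erg} by exhibiting a single Lyapunov-type certificate $v$ that works simultaneously for every matrix in the interval $[A^-,A^+]$. The crucial observation is that the two hypotheses decouple with respect to the uncertain matrix: the constraint $v^TS_b=0$ involves only the fixed bimolecular stoichiometry matrix $S_b$ and is therefore independent of which $A\in[A^-,A^+]$ is selected, while the destabilization-type constraint $v^TA<0$ is the only one that must be propagated across the whole family.

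First I would fix an arbitrary $A\in[A^-,A^+]$, so that $A\le A^+$ holds componentwise. Taking the vector $v\in\mathbb{R}^d_{>0}$ furnished by the hypothesis, I would combine the positivity of $v$ with $A\le A^+$ to deduce the componentwise inequality $v^TA\le v^TA^+$; indeed, for each column index $j$ one has $\sum_i v_i A_{ij}\le \sum_i v_i A^+_{ij}$ since $v_i>0$ and $A_{ij}\le A^+_{ij}$. Together with the assumed strict inequality $v^TA^+<0$ this yields $v^TA<0$. As $v^TS_b=0$ holds verbatim, the pair of conditions required by Theorem \ref{th:bi:erg} is satisfied by this same $v$ for the network with characteristic matrix $A$.

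It then remains only to invoke Theorem \ref{th:bi:erg}: the network being open, irreducible, and bimolecular by assumption, the existence of $v\in\mathbb{R}^d_{>0}$ with $v^TS_b=0$ and $v^TA<0$ implies that the network with characteristic matrix $A$ is exponentially ergodic with bounded and converging moments. Because $A\in[A^-,A^+]$ was arbitrary and the certificate $v$ was selected once and for all, independently of $A$, the conclusion holds uniformly over the entire interval; in particular it holds on the subfamily $\{A(\rho_u):\rho_u\in\mathcal{P}_u\}\subset[A^-,A^+]$, which is precisely robust exponential ergodicity.

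I do not anticipate a genuine obstacle here, as the argument is essentially a monotonicity observation rather than a new estimate. The only point deserving care is confirming that the single certificate $v$ is admissible for every member of the family, equivalently that passing to the vertex matrix $A^+$ is the binding case for the constraint $v^TA<0$; this is exactly what the sign condition $v>0$ guarantees. This mirrors the reduction used in Proposition \ref{lem:frob} for the unimolecular interval case, where Hurwitz stability of the whole interval was likewise reduced to that of the single matrix $A^+$ via the monotonicity of the Frobenius eigenvalue.
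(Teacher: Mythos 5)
Your proposal is correct and is essentially the paper's argument: the paper disposes of this proposition in one line by citing Theorem \ref{th:bi:erg} together with the interval reduction of Proposition \ref{lem:frob}, and your explicit step $v^TA\le v^TA^+<0$ for every $A\in[A^-,A^+]$ (valid since $v>0$), combined with the $A$-independence of $v^TS_b=0$, is precisely the propagation of the single certificate that makes that citation ``immediate.'' If anything, your version is slightly sharper, since you observe that the \emph{same} $v$ verifies both hypotheses of Theorem \ref{th:bi:erg} for every matrix in the interval, which is what the bimolecular case actually requires beyond the per-matrix Hurwitz stability delivered by Proposition \ref{lem:frob}.
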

\begin{proof}
  The result immediately follows from Theorem \ref{th:bi:erg} and Proposition \ref{lem:frob}.
\end{proof}

\subsection{Interval output-controllability of unimolecular reaction networks}

\blue{Let us consider the set of matrices \eqref{eq:calA} where the bounds $A^-,A^+$ have been determined such that \eqref{eq:intervalmodel} holds. Then, we have the following result:
\begin{proposition}\label{prop:interval:controllability}
The following statements are equivalent:
\begin{enumerate}[(a)]
  \item The interval system $([A],e_i,e_j^T)$ is interval output-controllable;
  \item For all $A\in\mathcal{A}$, there exists a vector $w\in\mathbb{R}^d_{\ge0}$ and a scalar $\mu\ge0$ such that $w^Te_i>0$ and $w^T(A-\mu I_d)+e_j^T=0$;
  \item There exists a vector $w_-\in\mathbb{R}^d_{\ge0}$ and a scalar $\mu_-\ge0$ verifying $w_-^Te_i>0$ and $w_-^T(A^--\mu_- I_d)+e_j^T=0$. \mendth
\end{enumerate}
Moreover, if all the matrices in $\mathcal{A}$ are Hurwitz stable, then the above statements hold with $\mu=\mu_-=0$.
\end{proposition}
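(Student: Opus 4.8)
The plan is to reduce interval output-controllability -- a condition ranging over the whole family $\mathcal{A}$ -- to a single output-controllability test on the vertex matrix $A^-$, exploiting the graph-theoretic characterization of output-controllability recorded in Proposition \ref{prop:outputcontrollability}. The engine of the argument is that, for the input/output pair $(e_i,e_j)$, output-controllability is a purely combinatorial reachability property, so only the off-diagonal zero pattern of the matrix matters, and among all members of $\mathcal{A}$ the matrix $A^-$ has the sparsest such pattern.

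First I would dispatch the equivalence between (a) and (b). By definition, $([A],e_i,e_j^T)$ is interval output-controllable if and only if $(A,e_i,e_j^T)$ is output-controllable for every $A\in\mathcal{A}$. Each such triple is internally positive, since every $A\in\mathcal{A}$ is Metzler and $e_i,e_j\ge0$, so Proposition \ref{prop:outputcontrollability} applies pointwise; its equivalence of (a) and (c) shows that $(A,e_i,e_j^T)$ is output-controllable exactly when there exist $w\in\mathbb{R}^d_{\ge0}$ and $\mu\ge0$ with $w^Te_i>0$ and $w^T(A-\mu I_d)+e_j^T=0$. Quantifying over all $A\in\mathcal{A}$ reproduces statement (b) verbatim.

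The heart of the proof is the reduction of (b) to the single condition (c) on $A^-$. Here I would use that, since every member of $\mathcal{A}$ is Metzler, the off-diagonal entries obey $0\le a^-_{mn}\le a_{mn}$ for all $A\in\mathcal{A}$ and $m\ne n$; hence any off-diagonal entry that is nonzero in $A^-$ is nonzero in every $A\in\mathcal{A}$. Consequently $\mathcal{G}_{A^-}$ is a subgraph of $\mathcal{G}_A$ for every $A\in\mathcal{A}$, so an $i$-to-$j$ path in $\mathcal{G}_{A^-}$ yields one in each $\mathcal{G}_A$. Invoking the equivalence of statements (a), (c) and (e) of Proposition \ref{prop:outputcontrollability} (legitimate because $b=e_i$, $c=e_j$), output-controllability of $(A,e_i,e_j^T)$ is equivalent to the existence of an $i$-to-$j$ path in $\mathcal{G}_A$. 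Thus $A^-$ realizes the sparsest graph in the family and is the worst case: if $(A^-,e_i,e_j^T)$ is output-controllable then so is every $(A,e_i,e_j^T)$, while conversely interval output-controllability forces output-controllability of the particular member $A^-\in\mathcal{A}$. Applying the equivalence of (a) and (c) of Proposition \ref{prop:outputcontrollability} to the Metzler matrix $A^-$ converts its output-controllability into the algebraic condition (c), closing the chain (a) $\Leftrightarrow$ (b) $\Leftrightarrow$ (c).

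For the final claim, if all matrices in $\mathcal{A}$ are Hurwitz stable then in particular $A^-\in\mathcal{A}$ is Hurwitz stable, and the last sentence of Proposition \ref{prop:outputcontrollability} lets the witnesses in (c) and (d) be taken with zero shift; applied to each $A$ and to $A^-$ this gives $\mu=\mu_-=0$. The step I expect to require the most care is justifying that $A^-$ is genuinely extremal: this rests on output-controllability for the pair $(e_i,e_j)$ being a zero-pattern (reachability) property that is insensitive to the diagonal and to the magnitudes of the off-diagonal entries, so that driving entries down toward $A^-$ can only remove edges, never create them. I would also pause to confirm that $A^-$ is Metzler -- which is precisely the requirement that every $M\in\mathcal{A}$ be an admissible characteristic matrix -- so that Proposition \ref{prop:outputcontrollability} is applicable to it.
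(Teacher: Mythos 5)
Your proposal is correct, but it proves the key implication (c) $\Rightarrow$ (b) by a genuinely different route than the paper. The paper argues algebraically and constructively: writing $A(\Delta)=A^-+\Delta$ with $\Delta\in[0,A^+-A^-]$, it builds an explicit witness $w(\Delta)^T=w_-^T(A^--\mu_-I_d)(A(\Delta)-\mu(\Delta)I_d)^{-1}$, shows $w(\Delta)\ge w_-\ge 0$ from the nonpositivity of the inverse of a Hurwitz stable Metzler matrix (after rewriting $w(\Delta)=[I_d+((\mu(\Delta)-\mu_-)I_d-\Delta)(A(\Delta)-\mu(\Delta)I_d)^{-1}]^Tw_-$), and verifies $w(\Delta)^T(A(\Delta)-\mu(\Delta)I_d)=-e_j^T$ by direct substitution. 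You instead invoke the graph-theoretic statement (e) of Proposition \ref{prop:outputcontrollability} (legitimately, since $b=e_i$, $c=e_j$ here) and observe that, all members of $\mathcal{A}$ being Metzler, every edge of $\mathcal{G}_{A^-}$ is an edge of $\mathcal{G}_A$ for each $A\in\mathcal{A}$, so $A^-$ is extremal for the reachability property; this is in fact the same monotonicity-of-zero-pattern reasoning the paper itself uses later for the robust case (Proposition \ref{prop:robust:controllability}) and the sign/structural cases (Propositions \ref{prop:sign:controllability} and \ref{prop:structural:controllability}), just imported into the interval setting. Your route buys simplicity: it sidesteps the paper's mildly delicate bookkeeping with $\mu(\Delta)$ and the ``large enough $\mu_-$'' requirement (the paper tacitly uses that a witness existing for one $\mu_-$ exists for all larger shifts, which it does not spell out), and your closing caveats -- that $A^-$ must be Metzler for $\mathcal{A}$ to consist of admissible characteristic matrices, and that the argument is insensitive to diagonal entries -- are exactly the right points to flag. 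What the paper's construction buys in exchange is quantitative content: an explicit parameterized witness satisfying $w(\Delta)\ge w_-$, in the same spirit as the construction of $v(\Delta)$ in the proof of Theorem \ref{th:affine:interval}, which a purely combinatorial existence argument does not provide. Your handling of (a) $\Leftrightarrow$ (b) and of the final claim ($\mu=\mu_-=0$ under Hurwitz stability of all of $\mathcal{A}$) matches the paper's.
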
}
\blue{\begin{proof}
  The equivalence between the two first statements follows from Proposition \ref{prop:outputcontrollability}. Clearly, (b) implies (c). We prove that the converse is also true. To this aim, define $A(\Delta)$ as $A(\Delta):=A^-+\Delta$ where $\Delta\in \Deltab:=[0,A^+-A^-]$. The key idea is to build a $w(\Delta)\ge0$ and a $\mu(\Delta)\ge0$ that verify the expressions $w(\Delta)^T(A(\Delta)-\mu(\Delta) I_d)+e_\ell^T=0$ and $w(\Delta)^Te_1>0$ for all $\Delta\in\Deltab$ provided that $w_-^T(A^--\mu_-I_d)+e_\ell^T=0$ and $w_-^Te_i>0$. We prove that such a $w(\Delta)$ is given by $w(\Delta):=[(A^--\mu_-I_d)(A(\Delta)-\mu(\Delta)I_d)]^{-1})^Tw_-$ with some large enough $\mu_-\ge0$ such that $\mu_-\ge\mu(\Delta)$ with $A(\Delta)-\mu(\Delta)I_d$ Hurwitz stable. Note that such a $\mu(\Delta)$ always exists.

  We first prove that this $w(\Delta)$ is nonnegative and that it verifies $e_1^Tw(\Delta)>0$ for all $\Delta\in\Deltab$. To show this, let us rewrite it as $w(\Delta)=[I_d+((\mu(\Delta)-\mu_-)I_d-\Delta)(A(\Delta)-\mu(\Delta)I_d)]^Tw_-$. Under the considered assumptions for $\mu(\Delta)$ and $\mu_-$, we get that $(A(\Delta)-\mu(\Delta)I_d)^{-1}\le0$. This together with $\Delta\ge0$  and $\mu_--\mu(\Delta)\ge0$, we obtain that $w(\Delta)\ge w_-\ge0$ for all $\Delta\in\Deltab$ and, therefore, that $w(\Delta)^Te_1\ge w_-^Te_1>0$ for all $\Delta\in\Deltab$.

  We now show that this $w(\Delta)$ verifies the equality condition. Substituting the expression for  $w(\Delta)$ in $w(\Delta)^T(A(\Delta)-\mu(\Delta)I_d)$ yields
%
\begin{equation}
    w(\Delta)^T(A(\Delta)-\mu(\Delta)I_d) 
    = w_-^T(A^--\mu_-I_d)=-e_\ell^T
\end{equation}
where the last equality has been obtained from the assumption that $w_-^T(A^--\mu_-I_d)+e_\ell^T=0$. This proves that (c) implies (b). The final  statement follows from the same reasons as in Proposition \ref{prop:outputcontrollability}.
\end{proof}}

\subsection{Antithetic integral control of interval reaction networks}

We are now in position to state the following generalization of Theorem \ref{th:affine:nominal}:
\begin{theorem}\label{th:affine:interval}
Let us consider an open unimolecular reaction network $(\Xz,\mathcal{R})$ with interval characteristic matrix $[A]$ and interval offset vector $[b_0]=[b_0^-,b_0^+]$. Assume also that the closed-loop reaction network $((\Xz,\Zz),\mathcal{R}\cup\mathcal{R}^c)$ is irreducible. Then, the following statements are equivalent:
  \begin{enumerate}[(a)]
     \item The open-loop reaction network $(\Xz,\mathcal{R})$ is interval ergodic and the system  $([A],e_1,e_\ell^T)$ is interval output controllable.
    \item There exist two vectors $v_+\in\mathbb{R}_{>0}^d$, $w_-\in\mathbb{R}_{\ge0}^d$ such that $v_+^TA^+<0$, $w_-^Te_1>0$ and $w_-^TA^-+e_\ell^T=0$.
  \end{enumerate}
     Moreover, when one of the above statements holds, then the closed-loop reaction network $((\Xz,\Zz),\mathcal{R}\cup\mathcal{R}^c)$ is interval ergodic and we have that $\E[X_\ell(t)]\to\mu/\theta$ as $t\to\infty$  for any values for the controller rate parameters $\eta,k>0$ provided that
\begin{equation}\label{eq:lowerbound2}
      \dfrac{\mu}{\theta}>\dfrac{q^T(A^+-\Delta)^{-1}b^+}{\alpha q^T(A^+-\Delta)^{-1}e_\ell}
    \end{equation}
    and
    \begin{equation}
      q^T(\alpha(A^+-\Delta)^{-1}+I_d)\ge0
    \end{equation}
for some $\alpha>0$, $q\in\mathbb{R}^d_{>0}$ and for all $\Delta\in[0,A^+-A^-]$.\mendth
\end{theorem}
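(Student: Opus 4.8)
The plan is to reduce the interval statement, member by member, to the nominal result of Theorem~\ref{th:affine:nominal}, using the interval criteria of Proposition~\ref{lem:frob} and Proposition~\ref{prop:interval:controllability} to pass from single networks to the whole family. For the equivalence (a)$\Leftrightarrow$(b), I would first invoke Proposition~\ref{lem:frob}: interval ergodicity of $(\Xz,\mathcal{R})$ is equivalent to the existence of $v_+\in\mathbb{R}^d_{>0}$ with $v_+^TA^+<0$, and this simultaneously certifies that every $M\in\mathcal{A}$ is Hurwitz stable. Because the whole family is then Hurwitz, the closing statement of Proposition~\ref{prop:interval:controllability} allows the parameter $\mu_-$ to be taken equal to $0$, so interval output-controllability of $([A],e_1,e_\ell^T)$ reduces to the existence of $w_-\in\mathbb{R}^d_{\ge0}$ with $w_-^Te_1>0$ and $w_-^TA^-+e_\ell^T=0$. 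Concatenating these two characterizations gives exactly (b), and conversely (b) supplies both certificates, so (a) and (b) are equivalent.

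For the closed-loop conclusion, fix $\Delta\in[0,A^+-A^-]$ and write $A(\Delta):=A^+-\Delta\in\mathcal{A}$. Each such member is Hurwitz stable and, by interval output-controllability, the system $(A(\Delta),e_1,e_\ell^T)$ is output-controllable, so statement (a) of Theorem~\ref{th:affine:nominal} holds for every individual network in the family. Applying Theorem~\ref{th:affine:nominal} member by member then yields ergodicity of each closed loop, hence interval ergodicity of $((\Xz,\Zz),\mathcal{R}\cup\mathcal{R}^c)$, together with $\E[X_\ell(t)]\to\mu/\theta$ provided $\mu/\theta$ exceeds the member-specific threshold $v^Tb_0/(\alpha e_\ell^Tv)$ for some $\alpha>0$ and $v\in\mathbb{R}^d_{>0}$ with $v^T(A(\Delta)+\alpha I_d)\le0$.

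The interval hypotheses enter when this member threshold is converted into a single uniform bound. The key device is the substitution $v(\Delta)^T:=-q^TA(\Delta)^{-1}$, which uses a fixed $q$ to manufacture a valid multiplier for each member: since $A(\Delta)$ is Metzler and Hurwitz, $A(\Delta)^{-1}\le0$ by Proposition~\ref{prop:stability}, whence $v(\Delta)\ge0$, with strict positivity $v(\Delta)>0$ following from irreducibility of the open-loop network (so that $-A(\Delta)^{-1}>0$). A direct computation gives $v(\Delta)^T(A(\Delta)+\alpha I_d)=-q^T(\alpha A(\Delta)^{-1}+I_d)$, so the assumption $q^T(\alpha(A^+-\Delta)^{-1}+I_d)\ge0$ is precisely $v(\Delta)^T(A(\Delta)+\alpha I_d)\le0$, while the same substitution rewrites the nominal threshold as $q^TA(\Delta)^{-1}b_0/(\alpha q^TA(\Delta)^{-1}e_\ell)$. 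Since $v(\Delta)\ge0$ and the true offset obeys $b_0\le b^+$, this threshold is monotone in the offset, so replacing $b_0$ by $b^+$ yields the worst-case bound \eqref{eq:lowerbound2}; requiring it for all $\Delta\in[0,A^+-A^-]$ dominates every member threshold at once.

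The main obstacle is exactly this uniformity: a single pair $(q,\alpha)$ must certify the membership condition and dominate the threshold simultaneously over the whole continuum of $\Delta$, and one must verify that $e_\ell^Tv(\Delta)>0$ so the quotient is well-defined, which is where strict positivity via irreducibility is genuinely used, and that the monotonicity in $b_0$ legitimately reduces the offset to its upper bound $b^+$. The substitution $v(\Delta)=-(A(\Delta)^{-1})^Tq$ is what makes a fixed $q$ generate the $\Delta$-dependent multipliers, turning a continuum of nominal conditions into the two uniform linear-algebraic conditions of the statement.
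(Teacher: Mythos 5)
Your proposal follows essentially the same route as the paper: the equivalence (a)$\Leftrightarrow$(b) is obtained from Proposition~\ref{lem:frob} and Proposition~\ref{prop:interval:controllability} (with $\mu_-=0$ justified by the Hurwitz stability of the whole family, a point you make more explicitly than the paper does), and the closed-loop bound is obtained by parameterizing the family as $A(\Delta)=A^+-\Delta$ and constructing the member-wise multiplier $v(\Delta)^T=-q^T(A^+-\Delta)^{-1}$. The paper reaches the very same $v(\Delta)$ in two steps, first setting $v(\Delta)^T=v_+^TA^+(A^+-\Delta)^{-1}$ and then substituting $v_+^T=-q^T(A^+)^{-1}$, which is only cosmetically different from your direct construction; your verification of the membership condition $v(\Delta)^T(A(\Delta)+\alpha I_d)=-q^T(\alpha(A^+-\Delta)^{-1}+I_d)$ matches the statement, and your explicit monotonicity argument $v(\Delta)^Tb_0\le v(\Delta)^Tb^+$ justifying the appearance of $b^+$ in \eqref{eq:lowerbound2} is a sound way to make precise a step the paper leaves implicit.

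One local repair is needed: the strict positivity $v(\Delta)>0$ should not be attributed to irreducibility of the open-loop network. Irreducibility in the sense of Definition~\ref{def:irreducibility} concerns the state space of the underlying Markov process and does not imply that $A(\Delta)$ is an irreducible matrix, and the claim $-A(\Delta)^{-1}>0$ entrywise is false in general (take $A(\Delta)$ diagonal, negative, and Hurwitz stable: its inverse has zero off-diagonal entries). The correct and simpler argument -- the one implicit in the paper's use of $v_+^TA^+<0$ -- is that $(A^+-\Delta)^{-1}$ is nonpositive and invertible, so no column of it can vanish and each column contains a strictly negative entry; since $q>0$ componentwise, $v(\Delta)^T=-q^T(A^+-\Delta)^{-1}$ is then strictly positive, which also yields $e_\ell^Tv(\Delta)>0$ without any extra hypothesis. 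With that substitution your proof is complete and coincides with the paper's.
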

\blue{\begin{proof}
The proof of the equivalence between (a) and (b) follows from the notion of interval ergodicity and interval output controllability as well as Proposition \ref{prop:interval:ergodicity} and Proposition \ref{prop:interval:controllability}. The conclusion of the theorem is an adaptation of that of Theorem \ref{th:affine:nominal}. To prove \eqref{eq:lowerbound2}, let us define, with some slight abuse of notation, the matrix $A(\Delta):=A^+-\Delta$, $\Delta\in \Deltab:=[0,A^+-A^-]$. This (Metzler) matrix is Hurwitz stable for all $\Delta\in \Deltab$ and, therefore, its inverse is nonpositive. We need now to construct a suitable positive vector $v(\Delta)\in\mathbb{R}_{>0}^d$ such that $v(\Delta)^TA(\Delta)<0$ for all $\Delta\in\Deltab$ provided that $v_+^TA^+<0$. We prove now that such a $v(\Delta)$ is given by $v(\Delta)=(A^+(A^+-\Delta)^{-1})^Tv_+$. To prove its positivity for all $\Delta\in\Deltab$, first rewrite $v(\Delta)$ as $v(\Delta)=(A^+-\Delta)^{-T}(v_+^TA^+)^T$ and note that we both have $(A^+-\Delta)^{-1}\le0$ and $v_+^TA^+$, hence the resulting vector is strictly positive for all $\Delta\in\Deltab$. We show now that this vector verifies $v(\Delta)^TA(\Delta)<0$ for all $\Delta\in\Deltab$. Direct substitution yields $v(\Delta)^TA(\Delta)=v_+^TA^+<0$, which proves the desired result. To obtain a more explicit expression for $v(\Delta)$, note that since $A^+$ is Metzler and Hurwitz stable, then for any $q\in\mathbb{R}^d_{>0}$, there exists a  $v_+\in\mathbb{R}^d_{>0}$ such that $v_+^TA^+=-q^T$. Substituting the expression $v_+^T=-q^T(A^+)^{-1}$ in the above formula for $v(\Delta)$ and substituting into \eqref{eq:lowerbound} yields \eqref{eq:lowerbound2}.
\end{proof}}

As in the nominal case, the above result can be exactly formulated as the linear program
 \begin{equation}\label{feas:int}
    \begin{array}{rcl}
      \textnormal{Find} && v\in\mathbb{R}_{>0}^d,\ w\in\mathbb{R}_{\ge0}^d\\
      \textnormal{s.t.}&&w^Te_1>0,v^TA^+<0,w^TA^-+e_\ell^T=0
    \end{array}
  \end{equation}
  which has exactly the same complexity as the linear program \eqref{feas:nom}. Hence, checking the possibility of controlling a family of networks defined by a characteristic interval-matrix is not more complicated that checking the possibility of controlling a single network.

\section{Robust results}\label{sec:robust}

To palliate the potential lack of accuracy of the interval approach, the robust approach captures the exact parameter dependence is developed in this section.

\dgreen{\subsection{Preliminaries}

The following lemma will be useful in proving the main results of this section:
\begin{lemma}\label{lem:det}
Let us consider a matrix $M(\theta)\in\mathbb{R}^{d\times d}$ which is Metzler and bounded for all $\theta\in\Theta\subset\mathbb{R}^N_{\ge0}$ and where $\Theta$ is assumed to be compact and connected. Then, the following statements are equivalent:
\begin{enumerate}[(a)]
  \item The matrix $M(\theta)$ is Hurwitz stable for all $\theta\in\Theta$.
  \item The coefficients of the characteristic polynomial of $M(\theta)$ are positive \blue{for all $\theta\in\Theta$.}
  \item The conditions hold:
  \begin{enumerate}[(c1)]
    \item there exists a $\theta^*\in\Theta$ such that $M(\theta^*)$ is Hurwitz stable, and
    \item for all $\theta\in\Theta$ we have that $(-1)^d\det(M(\theta))>0$.
  \end{enumerate}
\end{enumerate}
\end{lemma}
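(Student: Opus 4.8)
The plan is to prove the three statements equivalent by establishing the cycle (a) $\Rightarrow$ (b) $\Rightarrow$ (c) $\Rightarrow$ (a), exploiting the Metzler structure throughout. The crucial structural fact is that a Metzler matrix $M$ is Hurwitz stable if and only if all coefficients of its characteristic polynomial are positive. This is not true for general matrices, but for Metzler matrices it holds because $M$ Hurwitz is equivalent (by Proposition \ref{prop:stability}, via the shifted-matrix trick) to the leading principal minors of $-M$ being positive, and for Metzler matrices the positivity of the characteristic polynomial coefficients is in turn equivalent to these minor conditions. I would cite this as a known characterization of Metzler Hurwitz stability (e.g. from \cite{Berman:94} or \cite{Haddad:05}), since it is the pointwise-in-$\theta$ backbone of the whole argument.

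First I would prove (a) $\Leftrightarrow$ (b) pointwise in $\theta$. For each fixed $\theta\in\Theta$, the matrix $M(\theta)$ is Metzler, so by the characterization above $M(\theta)$ is Hurwitz stable if and only if the characteristic polynomial $\det(sI_d - M(\theta))$ has all positive coefficients. Quantifying over all $\theta\in\Theta$ gives the equivalence of (a) and (b) directly. This step is essentially a restatement and carries no real difficulty beyond invoking the Metzler characterization.

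The substantive direction is (c) $\Rightarrow$ (a), and this is where I expect the main obstacle. The hypotheses are that $M(\theta)$ is Hurwitz at a single point $\theta^*$ and that $(-1)^d\det(M(\theta)) > 0$ for all $\theta\in\Theta$; the latter says the constant coefficient of the characteristic polynomial is positive everywhere, equivalently $\det(M(\theta)) \ne 0$ so no eigenvalue crosses the origin. The idea is a continuity/connectedness argument: suppose for contradiction that some $\theta_1\in\Theta$ has $M(\theta_1)$ not Hurwitz. Since $\Theta$ is connected and $M(\cdot)$ is continuous, I would take a continuous path from $\theta^*$ to $\theta_1$ in $\Theta$ and track the Frobenius (dominant real) eigenvalue $\lambda_F(M(\theta))$, which for a Metzler matrix is real and depends continuously on $\theta$, with $M(\theta)$ Hurwitz iff $\lambda_F(M(\theta)) < 0$. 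Along the path $\lambda_F$ moves from negative (at $\theta^*$) to nonnegative (at $\theta_1$), so by the intermediate value theorem it must equal $0$ at some intermediate $\theta_0$. But $\lambda_F(M(\theta_0)) = 0$ forces $\det(M(\theta_0)) = 0$, contradicting (c2). The hard part is justifying that $\lambda_F$ is a genuine continuous real-valued function along the path and that it is the relevant eigenvalue for stability; this rests on Perron--Frobenius theory for Metzler matrices, for which I would again appeal to \cite{Berman:94}. Boundedness of $M(\theta)$ and compactness of $\Theta$ ensure no eigenvalue escapes to infinity, keeping $\lambda_F$ well-behaved.

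Finally, (a) $\Rightarrow$ (c) is immediate: (a) gives Hurwitz stability at every $\theta$, hence at some $\theta^*$, establishing (c1); and Hurwitz stability of the Metzler matrix $M(\theta)$ forces $(-1)^d\det(M(\theta)) > 0$ (the constant term of the characteristic polynomial of a Hurwitz matrix is $(-1)^d\det(M(\theta))$ and must be positive), giving (c2) for all $\theta$. This closes the cycle. The payoff of formulation (c) over (a) is computational: instead of verifying a spectral condition over the whole family, one checks Hurwitz stability at a single nominal parameter plus a sign condition on a determinant, the latter reducing to confirming that a polynomial in $\theta$ has no zeros over $\Theta$.
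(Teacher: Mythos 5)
Your proof is correct and follows essentially the same route as the paper's: the pointwise Metzler characterization (a)$\Leftrightarrow$(b) invoked as a known result, the trivial direction via the constant term $(-1)^d\det(M(\theta))$, and for (c)$\Rightarrow$(a) a connectedness argument tracking the real Perron--Frobenius eigenvalue along a path from the stable point to a putative unstable one, concluding via the intermediate value theorem that $\det(M(\theta_c))=0$ somewhere, contradicting (c2). The only differences are cosmetic: you argue by direct contradiction using the $\theta^*$ supplied by (c1), whereas the paper argues by contraposition with a case split, and you cite \cite{Berman:94} where the paper cites \cite{Mitkowski:00} for the coefficient characterization.
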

\begin{proof}
The proof of the equivalence between (a) and (b) follows, for instance, from \cite{Mitkowski:00} and is omitted. It is also immediate to prove that (b) implies (c) since if  $M(\theta)$ is Hurwitz stable for all $\theta\in\Theta$ then (c1) holds and the constant term of the characteristic polynomial of $M(\theta)$ is positive on $\theta\in\Theta$. Using now the fact that this constant term is equal to $(-1)^d\det(M(\theta))$ yields the result.

To prove that (c) implies (a), we use the contraposition. Hence, let us assume that there exists at least a $\theta_u\in\Theta$ for which the matrix $M(\theta_u)$ is not Hurwitz stable. If such a $\theta_u$ can be arbitrarily chosen in $\Theta$, then this implies the negation of statement (c1) (i.e. for all $\theta^*\in\Theta$ the matrix $M(\theta^*)$ is not Hurwitz stable) and the first part of the implication is proved.

Let us consider now the case where there exists some $\theta_s\in\Theta$ such that $M(\theta_s)$ is Hurwitz stable. Let us then choose a $\theta_u$ and a $\theta_s$ such that $M(\theta_u)$ is not Hurwitz stable and $M(\theta_s)$ is. Since $\Theta$ is connected, then there exists a path $\mathscr{P}\subset\Theta$ from $\theta_s$ and $\theta_u$. From Perron-Frobenius theorem, the dominant eigenvalue, denoted by $\lambda_{PF}(\cdot)$, is real and hence, we have that $\lambda_{PF}(M(\theta_s))<0$ and $\lambda_{PF}(M(\theta_u))\ge0$. Hence, from the continuity of eigenvalues then there exists a $\theta_c\in\mathscr{P}$ such that $\lambda_{PF}(M(\theta_c))=0$, which then implies that $(-1)^d\det(M(\theta_c))=0$, or equivalently, that the negation of (c2) holds. This concludes the proof.
\end{proof}

Before stating the next main result of this section, let us assume that $S_u$ in Definition \ref{def:Ab} has the following form
\begin{equation}
  S_u=\begin{bmatrix}
  S_{dg} & S_{ct} & S_{cv}
  \end{bmatrix}
\end{equation}
where $S_{dg}\in\mathbb{R}^{d\times n_{dg}}$ is a matrix with nonpositive columns, $S_{ct}\in\mathbb{R}^{d\times n_{ct}}$ is a matrix with nonnegative columns and $S_{cv}\in\mathbb{R}^{d\times n_{cv}}$ is a matrix with columns containing at least one negative and one positive entry. Also, decompose accordingly $\rho_u$ as $\rho_u=:\col(\rho_{dg},\rho_{ct},\rho_{cv})$ and define

\begin{equation*}
  \rho_{\bullet}\in\mathcal{P}_{\bullet}:=[\rho_{\bullet}^-,\rho_{\bullet}^+],\ 0\le\rho_{\bullet}^-\le\rho_{\bullet}^+<\infty
\end{equation*}
where $\bullet\in\{dg,ct,cv\}$ and let $\mathcal{P}_u:=\mathcal{P}_{dg}\times\mathcal{P}_{ct}\times\mathcal{P}_{cv}$.

In this regard, we can alternatively rewrite the matrix $A(\rho_u)$ as $A(\rho_{dg},\rho_{ct},\rho_{cv})$. We then have the following result:
\begin{lemma}\label{lem:A+}
The following statements are equivalent:
  \begin{enumerate}[(a)]
       \item The matrix $A(\rho_u)$ is Hurwitz stable for all $\rho_u\in\mathcal{P}_u$.
    \item The matrix
    \begin{equation}
          A^+(\rho_{cv}):= A(\rho_{dg}^-,\rho_{ct}^+,\rho_{cv})
    \end{equation}
     is Hurwitz stable for all $\rho_{cv}\in\mathcal{P}_{cv}$.
  \end{enumerate}
\end{lemma}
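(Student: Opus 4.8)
The plan is to exploit the entrywise monotonicity of the Metzler matrix $A(\rho_u)$ with respect to the degradation and catalysis rates, combined with the monotonicity of the Frobenius eigenvalue already invoked in the proof of Proposition~\ref{lem:frob}. The implication (a)$\Rightarrow$(b) is immediate, since $A^+(\rho_{cv})=A(\rho_{dg}^-,\rho_{ct}^+,\rho_{cv})$ is nothing but $A(\rho_u)$ evaluated at the admissible parameter value $(\rho_{dg}^-,\rho_{ct}^+,\rho_{cv})\in\mathcal{P}_u$; hence Hurwitz stability over all of $\mathcal{P}_u$ trivially forces Hurwitz stability of every $A^+(\rho_{cv})$. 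The entire content therefore lies in the converse (b)$\Rightarrow$(a).

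First I would record the linear dependence of $A$ on the individual rates. Since the network is unimolecular, each first-order reaction $k$ has a single reactant species $i_k$ and propensity $\rho^k x_{i_k}$, so the $k$-th row of $W(\rho_u)$ equals $\rho^k e_{i_k}^T$ and consequently $A(\rho_u)=S_uW(\rho_u)=\sum_k\rho^k\zeta_k e_{i_k}^T$, where $\zeta_k$ denotes the $k$-th column of $S_u$. Thus $\partial A/\partial\rho^k=\zeta_k e_{i_k}^T$ is a rank-one matrix whose sign pattern is exactly that of the column $\zeta_k$: it is nonpositive for the degradation columns making up $S_{dg}$ and nonnegative for the catalysis columns making up $S_{ct}$, while the conversion columns of $S_{cv}$ carry both signs.

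The key step is then an entrywise comparison. Fixing $\rho_{cv}\in\mathcal{P}_{cv}$ and writing, with the first sum ranging over the degradation reactions and the second over the catalysis reactions,
\begin{equation*}
A(\rho_{dg},\rho_{ct},\rho_{cv})=A^+(\rho_{cv})+\sum_{k\in dg}(\rho^k-\rho^k_-)\,\zeta_k e_{i_k}^T+\sum_{k\in ct}(\rho^k-\rho^k_+)\,\zeta_k e_{i_k}^T ,
\end{equation*}
I observe that in the first sum the scalar $\rho^k-\rho^k_-\ge0$ multiplies a nonpositive matrix, whereas in the second sum $\rho^k-\rho^k_+\le0$ multiplies a nonnegative matrix; both sums are therefore nonpositive, which gives the entrywise bound $A(\rho_{dg},\rho_{ct},\rho_{cv})\le A^+(\rho_{cv})$ for every $\rho_{dg}\in\mathcal{P}_{dg}$ and $\rho_{ct}\in\mathcal{P}_{ct}$. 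Both matrices are Metzler by Definition~\ref{def:Ab}, so the monotonicity of the Frobenius eigenvalue yields $\lambda_F(A(\rho_u))\le\lambda_F(A^+(\rho_{cv}))$. Under (b) each $A^+(\rho_{cv})$ is Hurwitz stable, i.e. $\lambda_F(A^+(\rho_{cv}))<0$, hence $\lambda_F(A(\rho_u))<0$ and $A(\rho_u)$ is Hurwitz stable for every $\rho_u\in\mathcal{P}_u$, establishing (a).

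The main obstacle is one of bookkeeping rather than of substance: I must verify that the sign decomposition of $S_u$ into $S_{dg}$, $S_{ct}$, $S_{cv}$ correctly encodes the monotone directions, and in particular that the conversion rates — which enter $A$ with a negative diagonal contribution and a positive off-diagonal contribution simultaneously — admit no such monotonicity and must genuinely be retained as free arguments $\rho_{cv}$ ranging over all of $\mathcal{P}_{cv}$. This is precisely why (b) cannot be collapsed to the stability of a single vertex matrix and why the conversion parameters survive in the statement; checking (b) over the compact set $\mathcal{P}_{cv}$ is then exactly the situation covered by Lemma~\ref{lem:det}.
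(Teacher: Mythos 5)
Your proof is correct and follows essentially the same route as the paper's: the implication (a)$\Rightarrow$(b) by specialization, and the converse via the entrywise bound $A(\rho_{dg},\rho_{ct},\rho_{cv})\le A^+(\rho_{cv})$ combined with the monotonicity of the Perron--Frobenius eigenvalue for Metzler matrices. Your rank-one decomposition $A(\rho_u)=\sum_k\rho^k\zeta_k e_{i_k}^T$ merely makes explicit the sign bookkeeping that the paper leaves as an assertion when stating the entrywise inequality.
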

\begin{proof}
The proof that (a) implies (b) is immediate. To prove that (b) implies (a), first note that we have
\begin{equation}
  A(\rho_{dg},\rho_{ct},\rho_{cv})\le A^+(\rho_{cv})=A(\rho_{dg}^-,\rho_{ct}^+,\rho_{cv})
\end{equation}
since for all $(\rho_{dg},\rho_{ct},\rho_{cv})\in\mathcal{P}_{u}$. Using the fact that for two Metzler matrices $B_1,B_2$, the inequality $B_1\le  B_2$ implies  $\lambda_{PF}(B_1)\le \lambda_{PF}(B_2)$ \cite{Berman:94}, then we can conclude that $A(\rho_{dg}^-,\rho_{ct}^+,\rho_{cv})$ is Hurwitz stable for all  $\rho_{cv}\in\mathcal{P}_{cv}$ if and only if  the matrix $A(\rho_{dg},\rho_{ct},\rho_{cv})$ is Hurwitz stable for all $(\rho_{dg},\rho_{ct},\rho_{cv})\in\mathcal{P}_{u}$. This completes the proof.
\end{proof}}

\subsection{Robust ergodicity of unimolecular networks}

The following theorem states the main result on the robust ergodicity of unimolecular reaction networks:
\begin{proposition}\label{th:uni_rob}\label{prop:robust:ergodicity}
Let us consider an irreducible open unimolecular reaction network $(\X{},\mathcal{R})$ with parameter-dependent characteristic matrix $A(\rho_u)$, $\rho_u\in\mathcal{P}_u$. Then, the following statements are equivalent:
 \begin{enumerate}[(a)]
   \item The reaction network $(\X{},\mathcal{R})$ is robustly ergodic.
   \item The matrix $A(\rho_u)$ is Hurwitz stable for all $\rho_u\in\mathcal{P}_u$.
    \item The matrix
    \begin{equation}
          A^+(\rho_{cv}):= A(\rho_{dg}^-,\rho_{ct}^+,\rho_{cv})
    \end{equation}
     is Hurwitz stable for all $\rho_{cv}\in\mathcal{P}_{cv}$.
     \item There exists a $\rho_{cv}^s\in\mathcal{P}_{cv}$ such that the matrix $A^+(\rho_{cv}^s)$ is Hurwitz stable and the polynomial $(-1)^d\det(A^+(\rho_{cv}))$ is positive for all $\rho_{cv}\in\mathcal{P}_{cv}$.
    \item There exists a polynomial vector-valued function ${v:\mathcal{P}_{cv}\mapsto\mathbb{R}_{>0}^d}$ of degree at most $d-1$ such that $v(\rho_{cv})^TA^+(\rho_{cv})<0$ for all $\rho_{cv}\in\mathcal{P}_{cv}$.
 \end{enumerate}
\end{proposition}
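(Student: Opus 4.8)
The plan is to establish the chain of equivalences (a) $\Leftrightarrow$ (b) $\Leftrightarrow$ (c) $\Leftrightarrow$ (d) $\Leftrightarrow$ (e), assembling the first four from results already in hand and reserving the real work for the construction of the polynomial certificate in statement (e).

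For (a) $\Leftrightarrow$ (b) I would simply unfold the definition of robust ergodicity: the network is robustly ergodic precisely when the network with characteristic matrix $A(\theta)$ is ergodic for every $\theta\in\mathcal{P}_u$, and Theorem \ref{th:uni:erg} identifies pointwise ergodicity of an open irreducible unimolecular network with Hurwitz stability of $A(\theta)$. The equivalence (b) $\Leftrightarrow$ (c) is exactly Lemma \ref{lem:A+}, which reduces stability over the full box $\mathcal{P}_u$ to stability of the worst-case matrix $A^+(\rho_{cv})$ over $\mathcal{P}_{cv}$ via monotonicity of the Perron--Frobenius eigenvalue for ordered Metzler matrices. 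For (c) $\Leftrightarrow$ (d) I would invoke Lemma \ref{lem:det} with $\Theta=\mathcal{P}_{cv}$ and $M(\rho_{cv})=A^+(\rho_{cv})$: the set $\mathcal{P}_{cv}=[\rho_{cv}^-,\rho_{cv}^+]$ is a compact connected box and $A^+(\rho_{cv})$ is Metzler (Definition \ref{def:Ab}) and bounded, so the equivalence (a) $\Leftrightarrow$ (c) of that lemma gives precisely (c) $\Leftrightarrow$ (d) here.

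The substantive part is (c) $\Leftrightarrow$ (e). The direction (e) $\Rightarrow$ (c) is immediate: for each fixed $\rho_{cv}$ the vector $v(\rho_{cv})>0$ satisfies $v(\rho_{cv})^TA^+(\rho_{cv})<0$, so Proposition \ref{prop:stability}(c) yields Hurwitz stability of $A^+(\rho_{cv})$. For the converse I would construct the certificate explicitly from the adjugate, setting
\[
  v(\rho_{cv})^T := -(-1)^d\, q^T \operatorname{adj}\!\big(A^+(\rho_{cv})\big)
\]
for a fixed $q\in\mathbb{R}^d_{>0}$ (e.g. $q=\mathds{1}$). Since the entries of $A^+(\rho_{cv})$ are affine in $\rho_{cv}$ and each entry of the adjugate is a signed $(d-1)\times(d-1)$ minor, $v(\cdot)$ is a polynomial of total degree at most $d-1$, matching the degree claim. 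Using the identity $\operatorname{adj}(A^+)A^+=\det(A^+)I_d$ I obtain
\[
  v(\rho_{cv})^T A^+(\rho_{cv}) = -(-1)^d \det\!\big(A^+(\rho_{cv})\big)\, q^T,
\]
which is strictly negative componentwise because Hurwitz stability forces $(-1)^d\det(A^+(\rho_{cv}))>0$. For positivity of $v$ I would rewrite $v(\rho_{cv})^T=(-1)^d\det(A^+(\rho_{cv}))\big(-q^T(A^+(\rho_{cv}))^{-1}\big)$; since $A^+(\rho_{cv})$ is Metzler and Hurwitz, Proposition \ref{prop:stability}(d) gives $(A^+)^{-1}\le0$, so $-q^T(A^+)^{-1}\ge0$, and nonsingularity of $(A^+)^{-1}$ (an invertible matrix has no zero column) together with $q>0$ upgrades this to strict positivity in every component; multiplication by the positive scalar $(-1)^d\det(A^+)>0$ preserves it.

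The main obstacle I anticipate is precisely the strict positivity of $v(\rho_{cv})$: componentwise nonnegativity is immediate from $(A^+)^{-1}\le0$, but ruling out a vanishing component requires the observation that a nonsingular nonnegative matrix has no zero column, so that the strictly positive weights $q$ yield a strictly positive combination in each coordinate. Some care is also needed to confirm that the sign of $\det(A^+(\rho_{cv}))$ is uniformly $(-1)^d$ on $\mathcal{P}_{cv}$; this follows from Hurwitz stability, since the determinant is the product of the eigenvalues whose non-real part occurs in conjugate pairs with positive product, leaving the sign determined by the (negative) real eigenvalues. Everything else reduces to bookkeeping with $\operatorname{adj}(A)A=\det(A)I_d$ and the degree count on minors.
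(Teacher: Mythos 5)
Your proof is correct and takes essentially the same route as the paper: the chain (a)--(d) is assembled exactly as there (Theorem \ref{th:uni:erg} for (a) $\Leftrightarrow$ (b), Lemma \ref{lem:A+} for (b) $\Leftrightarrow$ (c), Lemma \ref{lem:det} for (c) $\Leftrightarrow$ (d)), and your certificate $v(\rho_{cv})^T=-(-1)^d q^T\operatorname{adj}\bigl(A^+(\rho_{cv})\bigr)$ with $q=\mathds{1}$ is precisely the paper's adjugate construction, including the degree-$(d-1)$ count from the affine dependence on $\rho_{cv}$. Your added care over strict positivity (a nonsingular nonpositive inverse has no zero column) and over the uniform sign of $\det(A^+(\rho_{cv}))$ merely fills in details the paper asserts without elaboration.
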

\begin{proof}
The equivalence between the statement (a), (b) and (c) directly follows from Lemma \ref{lem:det} and Lemma \ref{lem:A+}. To prove the equivalence between the statements (b) and (d), first remark that (b) is equivalent to the fact that for any $q(\rho_{cv})>0$ on $\mathcal{P}_{cv}$, there exists a unique parameterized vector $v(\rho_{cv})\in\mathbb{R}^d$ such that
$v(\rho_{cv})>0$ and $v(\rho_{cv})^TA^+(\rho_{cv})=-q(\rho_{cv})^T$ for all $\rho_{cv}\in\mathcal{P}_{cv}$. Choosing $q(\rho_{cv})=-\mathds{1}_n(-1)^d\det(A^+(\rho_{cv}))$, we get that such a $v(\rho_{cv})$ is given by
\begin{equation}
\begin{array}{rcl}
    v(\rho_{cv})^T      &=&        -\mathds{1}_d^T(-1)^d\det(A^+(\rho_{cv}))A^+(\rho_{cv})^{-1}\\
                                &=&       (-1)^{d+1}\mathds{1}^T_d\textnormal{Adj}(A^+(\rho_{cv}))>0
\end{array}
\end{equation}
for all $\rho_{cv}\in\mathcal{P}_{cv}$. Since the matrix $A^+(\rho_{cv})$ is affine in $\rho_{cv}$, then the adjugate matrix $\textnormal{Adj}(A^+(\rho_{cv})$ contains entries of at most degree $d-1$ and the conclusion follows.
\end{proof}

\subsection{Robust ergodicity of bimolecular networks}

In the case of bimolecular networks, we have the following result:
\begin{proposition}
Let us consider an irreducible open bimolecular reaction network $(\X{},\mathcal{R})$ with parameter-dependent characteristic matrix $A(\rho_u)$, $\rho_u\in\mathcal{P}_u$. Then, the following statements are equivalent:
 \begin{enumerate}[(a)]
   %
\item There exists a polynomial vector-valued function ${v:\mathcal{P}_{u}\mapsto\mathbb{R}_{>0}^d}$ of degree at most $d-1$ such that
   \begin{equation}\label{eq:dmlsqdksdsmdkml}
     v(\rho_u)>0,\ v(\rho_u)^TS_b=0\ \textnormal{and}\ v(\rho_u)^TA(\rho_u)<0
   \end{equation}
for all $\rho_u\in\mathcal{P}_u$.
\item There exists a polynomial vector-valued function ${\tilde{v}:\mathcal{P}_{cv}\mapsto\mathbb{R}^{d-n_b}}$ of degree at most $d-1$ such that
     \begin{equation}
     \tilde{v}(\rho_{cv})^TS_b^\bot>0\ \textnormal{and}\ \tilde{v}(\rho_{cv})^TS_b^\bot A^+(\rho_{cv})<0
   \end{equation}
for all $\rho_{cv}\in\mathcal{P}_{cv}$ and where $n_b:=\rank(S_b)$ and $S_b^\bot S_b=0$, $S_b^\bot$ full-rank.
 \end{enumerate}
 Moreover, when one of the above statements holds, then the network  $(\X{},\mathcal{R})$ is robustly ergodic.
\end{proposition}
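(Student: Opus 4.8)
The plan is to prove the equivalence (a) $\Leftrightarrow$ (b) by combining two ingredients: a reparameterization of the null-space constraint $v^TS_b=0$ through the matrix $S_b^\bot$, and the monotonicity reduction from $A(\rho_u)$ to $A^+(\rho_{cv})$ already used in Lemma \ref{lem:A+}. Since $S_b^\bot\in\mathbb{R}^{(d-n_b)\times d}$ has full row rank $d-n_b$ and $S_b^\bot S_b=0$, its rows form a basis of the left null space of $S_b$; hence a vector $v$ satisfies $v^TS_b=0$ if and only if $v^T=\tilde v^TS_b^\bot$ for a unique $\tilde v\in\mathbb{R}^{d-n_b}$. Fixing a constant right inverse $R$ of $S_b^\bot$ (so that $S_b^\bot R=I_{d-n_b}$), one recovers $\tilde v^T=v^TR$, so the passage between $v$ and $\tilde v$ is linear with constant coefficients and therefore preserves the polynomial degree bound of at most $d-1$.

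For the direction (b) $\Rightarrow$ (a), I would set $v(\rho_u)^T:=\tilde v(\rho_{cv})^TS_b^\bot$, which is polynomial of degree at most $d-1$, strictly positive by hypothesis, and satisfies $v(\rho_u)^TS_b=0$ automatically. The only nontrivial point is the inequality $v(\rho_u)^TA(\rho_u)<0$: using the componentwise bound $A(\rho_{dg},\rho_{ct},\rho_{cv})\le A(\rho_{dg}^-,\rho_{ct}^+,\rho_{cv})=A^+(\rho_{cv})$ established in the proof of Lemma \ref{lem:A+}, together with $v(\rho_u)>0$, one obtains $v(\rho_u)^TA(\rho_u)\le v(\rho_u)^TA^+(\rho_{cv})=\tilde v(\rho_{cv})^TS_b^\bot A^+(\rho_{cv})<0$, the strictness being inherited componentwise from the strict negativity of the right-hand side.

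For the converse (a) $\Rightarrow$ (b), the key observation is that evaluating the given polynomial $v$ at the extreme corner in the degradation and catalytic rates already produces the desired object: set $\hat v(\rho_{cv}):=v(\rho_{dg}^-,\rho_{ct}^+,\rho_{cv})$, which lies in $\mathcal{P}_u$ for every $\rho_{cv}\in\mathcal{P}_{cv}$ because $\mathcal{P}_{dg}$ and $\mathcal{P}_{ct}$ are closed boxes. Then $\hat v$ is polynomial of degree at most $d-1$ in $\rho_{cv}$, strictly positive, satisfies $\hat v^TS_b=0$, and, since $A(\rho_{dg}^-,\rho_{ct}^+,\rho_{cv})=A^+(\rho_{cv})$, also $\hat v(\rho_{cv})^TA^+(\rho_{cv})<0$. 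Reparameterizing via $\tilde v(\rho_{cv})^T:=\hat v(\rho_{cv})^TR$ and using that $\hat v^T$ already lies in the row space of $S_b^\bot$ gives $\tilde v^TS_b^\bot=\hat v^T$, which yields both inequalities of (b). The concluding robust ergodicity claim then follows directly from (a): for each fixed $\rho_u\in\mathcal{P}_u$ the vector $v(\rho_u)>0$ certifies the hypotheses of Theorem \ref{th:bi:erg}, so the network with characteristic matrix $A(\rho_u)$ is exponentially ergodic, and since this holds for every $\rho_u$ the network is robustly ergodic.

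I expect the main obstacle to be bookkeeping rather than anything conceptual: one must verify that the three operations — restriction to the corner $(\rho_{dg}^-,\rho_{ct}^+)$, the reparameterization through $R$, and the monotonicity comparison — simultaneously preserve the degree bound $d-1$, the strict positivity of the transformed weight, and the strict componentwise negativity of the certificate. The monotonicity step in particular relies on the sign structure of the columns of $S_{dg}$, $S_{ct}$, and $S_{cv}$, which makes $A$ componentwise nonincreasing in the degradation rates and nondecreasing in the catalytic rates, exactly as in Lemma \ref{lem:A+}.
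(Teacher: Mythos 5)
Your proposal is correct and takes essentially the same route as the paper: for the nontrivial direction (b) $\Rightarrow$ (a) you use the identical certificate $v(\rho_u)^T=\tilde v(\rho_{cv})^TS_b^\bot$, which handles $v^TS_b=0$ and $v>0$ automatically, together with the same monotonicity bound $A(\rho_u)\le A^+(\rho_{cv})$ (the paper phrases it as $A(\rho_u)=A^+(\rho_{cv})-\Delta(\rho_{dg},\rho_{ct})$ with $\Delta\ge0$), and the robust ergodicity conclusion via Theorem \ref{th:bi:erg} applied pointwise is likewise the intended argument. The only difference is that you spell out the direction the paper dismisses as ``immediate'' --- evaluating $v$ at the corner $(\rho_{dg}^-,\rho_{ct}^+)$, which lies in $\mathcal{P}_u$ since the robust section takes closed boxes, and reparameterizing through a constant right inverse of $S_b^\bot$ to preserve the degree bound --- which is a correct and welcome filling-in of that step rather than a different approach.
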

\begin{proof}
  It is immediate to see that (a) implies (b). To prove the converse, first note that we have that $v(\rho_{cv})=(S_b^\bot)^T\tilde{v}(\rho_{cv})$ verifies $v(\rho_{cv})^TS_b=0$ and $v(\rho_{cv})>0$ for all $\rho_{cv}\in\mathcal{P}_{cv}$. This proves the equality and the first inequality in \eqref{eq:dmlsqdksdsmdkml}. Observe now that for any $\rho_u\in\mathcal{P}_u$, there exists a nonnegative matrix $\Delta(\rho_{dg},\rho_{ct})\in\mathbb{R}^{d\times d}_{\ge0}$ such that $A(\rho_u)=A^+(\rho_{cv})-\Delta(\rho_{dg},\rho_{ct})$. Hence, we have that
  \begin{equation}
  \begin{array}{rcl}
        v(\rho_{cv})^TA(\rho_u)&=&v(\rho_{cv})^T(A^+(\rho_{cv})-\Delta(\rho_{dg},\rho_{ct}))\\
        &\le&v(\rho_{cv})^TA^+(\rho_{cv})<0
  \end{array}
  \end{equation}
  which proves the implication.
\end{proof}

As in the unimolecular case, we have been able to reduce the number of parameters by using an upper-bound on the characteristic matrix. It is also interesting to note that the condition $\tilde{v}(\rho_{cv})^TS_b^\bot A^+(\rho_{cv})<0$ can be sometimes brought back to a problem of the form $\tilde{v}(\rho_{cv})^TM(\rho_{cv})<0$ for some square, and sometimes Metzler, matrix $M(\rho_{cv})$ which can be dealt in the same way as in the unimolecular case.

\subsection{Robust output controllability of unimolecular networks}

\blue{In this case, we have the following result:
\begin{proposition}\label{prop:robust:controllability}
The following statements are equivalent:
\begin{enumerate}[(a)]
  \item The parameter-dependent system $(A(\rho_u),e_i,e_j^T)$, $\rho_u\in\mathcal{P}_u$, is robustly output-controllable;
  \item For all $\rho_u\in\mathcal{P}_u$, there exists a vector-valued function $w:\mathcal{P}_u\mapsto\mathbb{R}^d_{\ge0}$ and a function $\mu:\mathcal{P}_u\mapsto\mathbb{R}_{\ge0}$
      verifying $w(\rho_u)^Te_i>0$ and $w(\rho_u)^T(A(\rho_u)-\mu(\rho_u)I_d)+e_j^T=0$ for all $\rho_u\in\mathcal{P}_u$; and
        \item There exists a vector $w_-\in\mathbb{R}^d_{\ge0}$ and a scalar $\mu_-\in\mathbb{R}_{\ge0}$  verifying $w_-^Te_i>0$ and $w_-^T(A^--\mu_-I_d)+e_j^T=0$ where $A^-:=A(\rho_{dg}^+, \rho_{ct}^-,\rho_{cv}^-)$.\mendth
\end{enumerate}
Moreover, when the matrix $A(\rho_u)$ is Hurwitz stable for all $\rho_u\in\mathcal{P}_u$, then the statement (b) holds with $\mu\equiv 0$.
\end{proposition}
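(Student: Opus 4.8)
The plan is to establish the cycle (a)$\Rightarrow$(b)$\Rightarrow$(c)$\Rightarrow$(a), reducing every step either to the pointwise output-controllability characterisation of Proposition \ref{prop:outputcontrollability} or to the monotonicity of the off-diagonal (Metzler) part of $A(\rho_u)$ in the rate parameters. The equivalence (a)$\Leftrightarrow$(b) is the easy half: by definition robust output controllability means that $(A(\rho_u),e_i,e_j^T)$ is output controllable for every $\rho_u\in\mathcal{P}_u$, and for each fixed $\rho_u$ the equivalence of statements (a) and (c) in Proposition \ref{prop:outputcontrollability} says this is the same as the existence of $w\ge0$ and $\mu\ge0$ with $w^Te_i>0$ and $w^T(A(\rho_u)-\mu I_d)+e_j^T=0$. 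Assembling these pointwise certificates into maps $\rho_u\mapsto w(\rho_u)$ and $\rho_u\mapsto\mu(\rho_u)$ (no regularity of which is demanded in (b)) gives (b), and conversely (b) returns the pointwise data, hence (a). The final clause is read off here too: if $A(\rho_u)$ is Hurwitz for all $\rho_u$, the ``Moreover'' part of Proposition \ref{prop:outputcontrollability} supplies the certificate with $\mu=0$ for each $\rho_u$, so $\mu\equiv0$ is admissible.

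For (b)$\Rightarrow$(c) I would merely specialise. Since each factor of $\mathcal{P}_u=\mathcal{P}_{dg}\times\mathcal{P}_{ct}\times\mathcal{P}_{cv}$ is the closed box $[\rho_\bullet^-,\rho_\bullet^+]$, the corner $\rho_u^-:=(\rho_{dg}^+,\rho_{ct}^-,\rho_{cv}^-)$ lies in $\mathcal{P}_u$ and satisfies $A(\rho_u^-)=A^-$. Evaluating the functions from (b) at $\rho_u^-$ and setting $w_-:=w(\rho_u^-)$, $\mu_-:=\mu(\rho_u^-)$ produces precisely the constant certificate required in (c).

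The substance is (c)$\Rightarrow$(a). By Proposition \ref{prop:outputcontrollability}, condition (c) is equivalent to output controllability of the single system $(A^-,e_i,e_j^T)$, i.e.\ (its statement (e)) to the existence of a directed path from $i$ to $j$ in $\mathcal{G}_{A^-}$. I would then show that $A^-$ minimises the off-diagonal entries across the whole family: for $n\ne m$ the entry $e_n^TA(\rho_u)e_m$ collects only the catalytic and conversion rates carrying $\X{m}$ to $\X{n}$ (the degradation block $S_{dg}$, whose columns are proportional to $-e_m$, contributes nothing off the diagonal), each with a nonnegative stoichiometric weight, so it is nondecreasing in $\rho_{ct}$ and $\rho_{cv}$ and independent of $\rho_{dg}$; hence $0\le e_n^TA^-e_m\le e_n^TA(\rho_u)e_m$ for all $\rho_u\in\mathcal{P}_u$. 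Consequently every edge of $\mathcal{G}_{A^-}$ is an edge of $\mathcal{G}_{A(\rho_u)}$, the $i$-to-$j$ path survives, and Proposition \ref{prop:outputcontrollability} renders $(A(\rho_u),e_i,e_j^T)$ output controllable for every $\rho_u$, which is (a).

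I expect the main obstacle to be resisting the temptation to transcribe verbatim the explicit construction of the interval case (Proposition \ref{prop:interval:controllability}), namely $w(\rho_u)^T=w_-^T(A^--\mu_-I_d)(A(\rho_u)-\mu_-I_d)^{-1}$. That argument rests on the full componentwise inequality $A^-\le A(\rho_u)$, which here fails precisely because of conversion reactions: a single conversion rate for $\X{m}\to\X{n}$ lowers the diagonal entry $A_{mm}$ while raising the off-diagonal entry $A_{nm}$, so no corner can minimise both at once and $A^--A(\rho_u)$ is sign-indefinite on the diagonal. The resolution, and the real crux, is that output controllability is blind to the diagonal: it is the graph/static-gain property of characterisations (d)--(e) in Proposition \ref{prop:outputcontrollability}, and with respect to the off-diagonal part alone $A^-$ genuinely is a lower bound. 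This is exactly why $A^-$ is built from $\rho_{ct}^-$ and $\rho_{cv}^-$, and why the accompanying choice of $\rho_{dg}^+$, being irrelevant off the diagonal, does no harm.
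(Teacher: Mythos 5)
Your proof is correct, and for the crucial implication it is a rigorous rendering of what the paper only sketches. The paper's own proof is four lines: it asserts the result ``follows from the same lines'' as Proposition~\ref{prop:interval:controllability} ``with the difference that the parameter $\rho_{cv}$ is now present,'' and then invokes the observation that, by Proposition~\ref{prop:outputcontrollability}, only the \emph{location} of the nonzero off-diagonal elements matters, so the worst case occurs at the smallest catalytic and conversion rates. Your (c)$\Rightarrow$(a) step makes that remark precise: routing through the graph characterization (statement (e) of Proposition~\ref{prop:outputcontrollability}), you verify that each off-diagonal entry of $A(\rho_u)$ is a nonnegative combination of the $\rho_{ct}$ and $\rho_{cv}$ rates, independent of $\rho_{dg}$ (since the columns of $S_{dg}$ only touch the diagonal), hence $0\le e_n^TA^-e_m\le e_n^TA(\rho_u)e_m$ for $n\ne m$, so every edge of $\mathcal{G}_{A^-}$ persists in $\mathcal{G}_{A(\rho_u)}$ and the $i$-to-$j$ path survives for all $\rho_u\in\mathcal{P}_u$. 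Where you go beyond the paper is in explicitly diagnosing why the interval-case certificate $w(\Delta)^T=w_-^T(A^--\mu_-I_d)(A(\Delta)-\mu(\Delta)I_d)^{-1}$ cannot be transcribed verbatim: it rests on the componentwise bound $A^-\le A(\rho_u)$, which fails on the diagonal exactly when conversion reactions are present (a conversion rate raises an off-diagonal entry while lowering a diagonal one). The paper's phrase about $\rho_{cv}$ being ``now present'' glosses over this; your resolution---that output controllability is blind to the diagonal, with respect to which $A^-$ is a genuine lower bound---is precisely the content of the paper's structural remark, now with a complete argument. Your remaining steps (pointwise assembly for (a)$\Leftrightarrow$(b), corner evaluation at $(\rho_{dg}^+,\rho_{ct}^-,\rho_{cv}^-)$ for (b)$\Rightarrow$(c), which is legitimate since the robust section defines the $\mathcal{P}_\bullet$ as closed boxes, and the pointwise use of the ``Moreover'' clause of Proposition~\ref{prop:outputcontrollability} for $\mu\equiv0$) coincide with the paper's implicit reasoning.
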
}
\blue{\begin{proof}
The proof of this result follows from the same lines as the proof of Proposition \ref{prop:interval:controllability} with the difference that the parameter $\rho_{cv}$ is now present. However, we know from Proposition \ref{prop:outputcontrollability} that only the location of the nonzero off-diagonal elements matters from the output-controllability. In this respect, the worst-case happens whenever the reaction rates of the catalytic and conversion reactions are the smallest. The final  statement follows from the same reasons as in Proposition \ref{prop:outputcontrollability}.
\end{proof}}

\subsection{Robust antithetic integral control of unimolecular networks}

\begin{theorem}\label{th:affine:robust}
Let us consider an open unimolecular reaction network $(\Xz,\mathcal{R})$ with parameter-dependent characteristic matrix $A(\rho_u)$, $\rho_u\in\mathcal{P}_u$ and parameter-dependent offset vector $b_0(\rho_0)$, $\rho_0\in\mathcal{P}_0$. Assume further that the closed-loop reaction network $((\Xz,\Zz),\mathcal{R}\cup\mathcal{R}^c)$ is irreducible. Then, the following statements are equivalent:
  \begin{enumerate}[(a)]
   \item The open-loop reaction network $(\Xz,\mathcal{R})$ is robustly ergodic and the system  $(A(\rho_u),e_1,e_\ell^T)$ is robustly output controllable.\label{st:rob1}
    \item There exist two vector-valued functions $v_+:\mathcal{P}_{cv}\mapsto\mathbb{R}_{>0}^d$, $w_-:\mathcal{P}_{cv}\mapsto\mathbb{R}_{>0}^d$ such that $v_+(\rho_{cv})^TA^+(\rho_{cv})<0$, $w_-(\rho_{cv})^Te_1>0$ and $w_-(\rho_{cv})^TA^-(\rho_{cv})+e_\ell^T=0$.\label{st:rob2}
  \end{enumerate}
     Moreover, when one of the above statements holds, then the closed-loop reaction network $((\Xz,\Zz),\mathcal{R}\cup\mathcal{R}^c)$ is robustly ergodic and we have that $\E[X_\ell(t)]\to\mu/\theta$ as $t\to\infty$  for any values for the controller rate parameters $\eta,k>0$ provided that
\begin{equation}\label{eq:lowerbound3}
      \dfrac{\mu}{\theta}>\dfrac{v^+(\rho_{cv})^TA^+(\rho_{cv})}{\alpha v^+(\rho_{cv})^Te_\ell}
    \end{equation}
    and
    \begin{equation}
      v^+(\rho_{cv})^T(A^+(\rho_{cv})+\alpha I_d)\le0
    \end{equation}
for some $\alpha>0$ and for all $\rho_{cv}\in\mathcal{P}_{cv}$.
\end{theorem}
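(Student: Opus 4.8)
The plan is to transpose the proof of Theorem~\ref{th:affine:interval} to the exact parameter dependence, replacing the interval perturbation by the conversion-rate variable $\rho_{cv}$ and invoking the robust counterparts of the ergodicity and output-controllability results. I would first establish the equivalence between \eqref{st:rob1} and \eqref{st:rob2}. By Proposition~\ref{prop:robust:ergodicity}, the robust ergodicity of the open-loop network is equivalent to the Hurwitz stability of $A^+(\rho_{cv})$ for all $\rho_{cv}\in\mathcal{P}_{cv}$, and hence to the existence of $v_+(\rho_{cv})>0$ with $v_+(\rho_{cv})^TA^+(\rho_{cv})<0$. The robust output-controllability of $(A(\rho_u),e_1,e_\ell^T)$ is characterized by Proposition~\ref{prop:robust:controllability}; since robust ergodicity already forces $A(\rho_u)$ to be Hurwitz stable on all of $\mathcal{P}_u$, the ``Moreover'' clause of that proposition permits taking $\mu\equiv0$, so the controllability certificate reduces to $w_-(\rho_{cv})^Te_1>0$ and $w_-(\rho_{cv})^TA^-(\rho_{cv})+e_\ell^T=0$. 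Concatenating the two families of certificates yields exactly \eqref{st:rob2}, and reading the argument in reverse gives the converse.

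The closed-loop ergodicity together with the convergence $\E[X_\ell(t)]\to\mu/\theta$ would then be obtained by applying Theorem~\ref{th:affine:nominal} pointwise: for each frozen $\rho_u\in\mathcal{P}_u$ the network has fixed characteristic matrix $A(\rho_u)$, and \eqref{st:rob2} supplies admissible nominal certificates at that parameter value, so the nominal conclusion holds for every $\rho_u$ and thus robustly.

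For the explicit bound \eqref{eq:lowerbound3}, I would reproduce the perturbation argument of Theorem~\ref{th:affine:interval}. Using the decomposition $A(\rho_u)=A^+(\rho_{cv})-\Delta(\rho_{dg},\rho_{ct})$ with $\Delta(\rho_{dg},\rho_{ct})\ge0$ (the degradation and catalytic contributions only subtract from $A^+$), I set $v(\rho_u)^T:=v_+(\rho_{cv})^TA^+(\rho_{cv})A(\rho_u)^{-1}$. Since $A(\rho_u)$ is Metzler and Hurwitz stable, Proposition~\ref{prop:stability} gives $A(\rho_u)^{-1}\le0$, and combined with $v_+(\rho_{cv})^TA^+(\rho_{cv})<0$ this forces $v(\rho_u)>0$ for all $\rho_u\in\mathcal{P}_u$, the nonsingularity of $A(\rho_u)$ ruling out degenerate zero entries. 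A direct substitution yields $v(\rho_u)^TA(\rho_u)=v_+(\rho_{cv})^TA^+(\rho_{cv})<0$, so $v(\rho_u)$ is an admissible certificate in \eqref{eq:lowerbound}; moreover the accompanying condition $v_+(\rho_{cv})^T(A^+(\rho_{cv})+\alpha I_d)\le0$ transfers through the same $\Delta\ge0$ decomposition to give $v(\rho_u)^T(A(\rho_u)+\alpha I_d)\le0$, which is the robust analogue of the requirement attached to \eqref{eq:lowerbound}. Feeding $v(\rho_u)$ into \eqref{eq:lowerbound} and re-expressing everything through $v_+(\rho_{cv})$ and $A^+(\rho_{cv})$ then produces \eqref{eq:lowerbound3}.

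The main obstacle I anticipate is the same as in the interval case: ensuring that the constructed $v(\rho_u)$ is strictly positive \emph{uniformly} over the entire parameter box and that the resulting inequality holds \emph{simultaneously} for every $\rho_{cv}\in\mathcal{P}_{cv}$, since both the certificate and the matrix vary with the conversion rates. The nonpositivity of $A(\rho_u)^{-1}$ inherited from Metzler-Hurwitz stability, paired with the strict negativity of $v_+(\rho_{cv})^TA^+(\rho_{cv})$, is precisely what secures this uniformity and closes the argument.
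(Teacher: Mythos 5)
Your proposal is correct and matches the paper's own (much terser) proof, which likewise derives the equivalence of (a) and (b) from Proposition \ref{prop:robust:ergodicity} and Proposition \ref{prop:robust:controllability} and handles the tracking conclusion in one line as ``an adaptation of that of Theorem \ref{th:affine:nominal}'' --- exactly the pointwise application of the nominal theorem combined with the $A(\rho_u)=A^+(\rho_{cv})-\Delta(\rho_{dg},\rho_{ct})$ perturbation construction that you transcribe from Theorem \ref{th:affine:interval}. One small remark: your claim that the $\alpha$-condition ``transfers through the same $\Delta\ge0$ decomposition'' implicitly uses the antitonicity of matrix inversion on Metzler Hurwitz matrices (which gives $v(\rho_u)\le v_+(\rho_{cv})$ for your transported certificate and hence $v(\rho_u)^T(A(\rho_u)+\alpha I_d)\le v_+(\rho_{cv})^T(A^+(\rho_{cv})+\alpha I_d)\le 0$), although the simpler choice $v=v_+(\rho_{cv})$ itself already suffices, since $v_+(\rho_{cv})^T(A(\rho_u)+\alpha I_d)=v_+(\rho_{cv})^T(A^+(\rho_{cv})+\alpha I_d)-v_+(\rho_{cv})^T\Delta\le 0$.
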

\begin{proof}
  The equivalence between the statements (a) and (b) follows from Proposition \ref{prop:robust:ergodicity} and Proposition \ref{prop:robust:controllability}. The concluding statement is an adaptation of that of Theorem \ref{th:affine:nominal}.
\end{proof}
\blue{The above result can be exactly formulated as the following  infinite-dimensional linear program
 \begin{equation}
    \begin{array}{rcl}
      \textnormal{Find} && v:\mathcal{P}_{cv}\mapsto\mathbb{R}_{>0}^d,\ w:\mathcal{P}_{cv}\mapsto\mathbb{R}_{\ge0}^d\\
      \textnormal{s.t.}&&w(\rho_{cv})^Te_1>0,v(\rho_{cv})^TA^+(\rho_{cv})<0,\\
      &&w(\rho_{cv})^TA^-(\rho_{cv})+e_\ell^T=0, \textnormal{ for all }\rho_{cv}\in\mathcal{P}_{cv}
    \end{array}
  \end{equation}
 which has a higher complexity than the previous feasibility problems due to its infinite-dimensional nature. However, from Proposition \ref{prop:robust:ergodicity} and Proposition \ref{prop:robust:controllability}, we know that it is enough to look for polynomials of degree $d-1$. Hence, polynomial optimization methods can be used to solve this problem; see e.g. \cite{Handelman:88,Briat:11h,Putinar:93,Parrilo:00,Lasserre:01,Lasserre:10}.}

\section{Sign results}\label{sec:sign}

The objective of this section is to prove analogues of the ergodicity and output-controllability results of Section \ref{sec:prel} whenever the parameters take arbitrary positive values.

\subsection{Sign-ergodicity of unimolecular networks}

The following result proved in \cite{Briat:14c} will turn out to be a key ingredient for deriving the main result of this section:
\begin{proposition}[\cite{Briat:14c}]\label{prop:sign:ergodicity}
Let us consider an irreducible open unimolecular reaction network $(\X{},\mathcal{R})$ with sign characteristic matrix $S_A$. Then, the following statements are equivalent:
  \begin{enumerate}[(a)]
   \item The reaction network $(\X{},\mathcal{R})$ is sign-ergodic.
    \item All the matrices in $\mathcal{Q}(S_A)$ are Hurwitz stable.
    \item The matrix $\sgn(S_A)$ is Hurwitz stable.
    \item The diagonal elements of $\Sigma$ are negative and the directed graph $\mathcal{G}_{S_A}=(\mathcal{V},\mathcal{E})$ defined with
    \begin{itemize}
      \item $\mathcal{V}:=\{1,\ldots,d\}$ and
      \item $\mathcal{E}:=\{(m,n):\ e_n^T\Sigma e_m\ne0,\ m,n\in V,\ m\ne n\}$.
    \end{itemize}
    is an acyclic directed graph.\mendth
  \end{enumerate}
\end{proposition}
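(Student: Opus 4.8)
The plan is to prove the four-way equivalence by exploiting the fact that, for a sign pattern with all-negative diagonal, the qualitative class $\mathcal{Q}(S_A)$ contains matrices of arbitrarily large and small magnitude, so Hurwitz stability of the \emph{whole} class is an extremely rigid requirement. I would organize the argument as a cycle (a)$\Leftrightarrow$(b), (b)$\Leftrightarrow$(d), and (c)$\Leftrightarrow$(d), since the graph-theoretic condition (d) is the natural pivot that makes the magnitude-independence transparent.

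First I would dispatch (a)$\Leftrightarrow$(b): by Theorem \ref{th:uni:erg}, a single unimolecular network with characteristic matrix $A\in\mathcal{Q}(S_A)$ is (exponentially) ergodic iff $A$ is Hurwitz stable. Quantifying over all $A\in\mathcal{Q}(S_A)$ gives the equivalence of sign-ergodicity with Hurwitz stability of the entire class, exactly the content of the Definition of sign-ergodicity. This step is immediate. Next, for (b)$\Leftrightarrow$(d) I would argue as follows. A Metzler matrix whose diagonal is \emph{not} all-negative fails to be Hurwitz for some scaling of the positive off-diagonal entries (one can push a nonnegative diagonal entry to dominate), so all-negative diagonal is necessary; this handles the diagonal part of (d). For the acyclicity part, the key observation is that a directed cycle $i_1\to i_2\to\cdots\to i_m\to i_1$ in $\mathcal{G}_{S_A}$ corresponds to a product of off-diagonal entries that contributes a positive term $\prod a_{i_{k+1}i_k}$ to the characteristic polynomial; by choosing the magnitudes of these cycle entries large while keeping the diagonal fixed, one destroys the sign-definiteness of a coefficient of the characteristic polynomial and hence Hurwitz stability, so any cycle obstructs class-wide stability. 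Conversely, if $\mathcal{G}_{S_A}$ is acyclic, then after a simultaneous permutation the matrix is triangular for \emph{every} $A\in\mathcal{Q}(S_A)$, whence its eigenvalues are exactly the (negative) diagonal entries and every member is Hurwitz. This gives (d)$\Rightarrow$(b).

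The equivalence (c)$\Leftrightarrow$(d) I would obtain by noting that $\sgn(S_A)\in\mathcal{Q}(S_A)$ is itself a representative of the class with diagonal entries $-1$ and off-diagonal entries in $\{0,1\}$; applying the same triangularization argument, $\sgn(S_A)$ is Hurwitz iff the associated digraph is acyclic and the diagonal is negative, which is precisely (d). Alternatively, (c)$\Rightarrow$(b) is the nontrivial direction: one representative being Hurwitz does not in general force the whole class to be, but under acyclicity the spectrum is the diagonal for every member, so stability transfers. I would therefore route (c) through (d) rather than attempting a direct (b)$\Leftrightarrow$(c) argument.

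The main obstacle will be the careful bookkeeping in the cycle-to-coefficient argument inside (b)$\Rightarrow$(d): one must verify that the contribution of a directed cycle to a characteristic-polynomial coefficient cannot be cancelled by other sign-patterned terms for \emph{all} choices of magnitudes, i.e. that the offending coefficient can genuinely be driven to have the wrong sign within $\mathcal{Q}(S_A)$. The clean way to handle this is to use the simultaneous permutation to block-triangular form induced by the condensation of strongly connected components of $\mathcal{G}_{S_A}$: acyclicity of the off-diagonal digraph is equivalent to every strongly connected component being a single vertex, and a nontrivial strongly connected component lets one scale its internal entries to produce an unstable Metzler diagonal block, independently of the rest. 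This localizes the instability and avoids any global cancellation subtleties, so I would lean on the strongly-connected-component decomposition as the technical engine of the proof.
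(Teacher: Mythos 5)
Your argument is correct, but note that the paper itself contains no proof of this proposition: it is imported wholesale from \cite{Briat:14c}, so any proof you give is necessarily your own route. Compared with the cited reference, which organizes the characterization of Metzler sign-stability around the observation that acyclicity of the off-diagonal digraph is equivalent to nilpotency (zero spectral radius) of the nonnegative off-diagonal part --- the same device this paper reuses later in Proposition \ref{prop:structural:ergodicity}, where $\varrho(W_{ct}A_{\mathds{1}}^{-1}S_{ct})=0$ is tested via sign conditions --- your condensation-into-strongly-connected-components engine is essentially an equivalent repackaging: the topological sort of the condensation gives the block-triangular form whose diagonal blocks are the SCCs, and the triangularization in the acyclic case is the degenerate instance where every block is $1\times 1$. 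What your version buys is that it localizes the instability without any characteristic-polynomial bookkeeping; what the nilpotency version buys is a one-line bridge between (c) and (d), since $\sgn(S_A)=-I_d+B$ with $B\ge 0$, so $\sgn(S_A)$ is Hurwitz iff $\varrho(B)<1$, and $\varrho(B)\in\{0\}\cup[1,\infty)$ for a $0$--$1$ matrix $B$ (a cycle forces $B$ to dominate a cyclic permutation pattern, whence $\varrho(B)\ge1$; acyclicity forces $\varrho(B)=0$). Two cosmetic tightenings you may want: first, the diagonal necessity in (b)$\Rightarrow$(d) needs no scaling argument at all, since for any Metzler matrix $M$ one has $\lambda_{PF}(M)\ge \max_i e_i^TMe_i$ (shift by $cI_d$ and use that the spectral radius of a nonnegative matrix dominates its diagonal), so a single $0$ or $\oplus$ diagonal sign already disqualifies \emph{every} member of $\mathcal{Q}(S_A)$; second, the same shift-and-monotonicity fact gives $\lambda_{PF}$ of a Metzler matrix $\ge\lambda_{PF}$ of any principal submatrix, which is the cleanest way to justify your claim that blowing up the entries along one cycle destabilizes the full matrix ``independently of the rest,'' and in (c)$\Rightarrow$(d) you should say explicitly that triangularization only yields the sufficiency direction, with the necessity supplied by the $\varrho(B)\ge 1$ observation above. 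Finally, your step (a)$\Leftrightarrow$(b) tacitly uses that the class $\mathcal{Q}(S_A)$ coincides with the attainable characteristic matrices, which holds under the no-conversion-reaction hypothesis built into the paper's definition of sign-ergodicity and is worth flagging; none of this affects the validity of your proof.
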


\subsection{Sign output-controllability of unimolecular networks}

\blue{We have the following result:
\begin{proposition}\label{prop:sign:controllability}
The following statements are equivalent:
\begin{enumerate}[(a)]
  \item The sign system $(S_A,e_i,e_j^T)$ is sign output-controllable;
  \item For all $A\in\mathcal{Q}(S_A)$, there exists a vector $w\in\mathbb{R}^d_{\ge0}$ and a scalar $\mu\in\mathbb{R}_{\ge0}$ verifying $w^Te_i>0$ and $w^T(A-\mu I_d)+e_j^T=0$; and
  \item There exists a vector $w\in\mathbb{R}^d_{\ge0}$ and a scalar $\mu\in\mathbb{R}_{\ge0}$  verifying $w^Te_i>0$ and $w^T(\sgn(A)-\mu I_d)+e_j^T=0$.\mendth
\end{enumerate}
Moreover, if all the matrices in $\mathcal{Q}(S_A)$ are Hurwitz stable then the above statements hold with $\mu=0$.
\end{proposition}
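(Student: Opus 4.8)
The plan is to reduce everything to Proposition~\ref{prop:outputcontrollability}, exploiting the fact that output-controllability of an internally positive (Metzler) system $(A,e_i,e_j^T)$ is a purely structural property: by the equivalence (a)$\Leftrightarrow$(e) there, it holds if and only if there is a path from node $i$ to node $j$ in $\mathcal{G}_A$, and $\mathcal{G}_A$ depends only on the locations of the nonzero off-diagonal entries of $A$. The first thing I would record is that every $A\in\mathcal{Q}(S_A)$ is Metzler: since the network is unimolecular, the diagonal entries of $S_A$ are $\ominus$ and its off-diagonal entries are $\oplus$ or $0$, so any matrix sharing this sign pattern has nonnegative off-diagonal entries. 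Consequently Proposition~\ref{prop:outputcontrollability} applies verbatim to each member of $\mathcal{Q}(S_A)$, and all of these matrices induce the same graph $\mathcal{G}_{S_A}$.

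For (a)$\Leftrightarrow$(b): by definition sign output-controllability means that $(A,e_i,e_j^T)$ is output-controllable for every $A\in\mathcal{Q}(S_A)$. Applying the equivalence (a)$\Leftrightarrow$(c) of Proposition~\ref{prop:outputcontrollability} to each fixed $A$ turns this into the existence, for every $A$, of $w\ge0$ and $\mu\ge0$ with $w^Te_i>0$ and $w^T(A-\mu I_d)+e_j^T=0$, which is precisely statement (b).

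For (b)$\Leftrightarrow$(c): the forward implication is immediate, because $\sgn(S_A)$, viewed as the real matrix with entries in $\{-1,0,1\}$, lies in $\mathcal{Q}(S_A)$, so specializing (b) to this particular matrix yields (c). For the converse I would argue structurally: by Proposition~\ref{prop:outputcontrollability} the certificate in (c) is equivalent to the existence of a path from $i$ to $j$ in $\mathcal{G}_{\sgn(S_A)}$; since $\sgn(S_A)$ and every $A\in\mathcal{Q}(S_A)$ have identical off-diagonal zero/nonzero patterns, this is the same graph $\mathcal{G}_{S_A}$, so the same path certifies output-controllability -- equivalently the algebraic condition in (b) -- for every $A\in\mathcal{Q}(S_A)$. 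This propagation across the whole qualitative class is the one genuinely substantive point, and it rests entirely on the graph-theoretic characterization being blind to the magnitudes of the entries.

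Finally, the ``moreover'' clause follows from the last sentence of Proposition~\ref{prop:outputcontrollability}: when the relevant matrix is Hurwitz stable one may take $\mu=0$, so if every matrix in $\mathcal{Q}(S_A)$ is Hurwitz stable the certificates in (b) and (c) can be chosen with $\mu=0$. I expect the main obstacle to be expository rather than analytic, namely making precise that the path characterization is invariant across $\mathcal{Q}(S_A)$ and that the representative $\sgn(S_A)$ is itself a legitimate member of the class; once those two observations are in place, no further computation is required.
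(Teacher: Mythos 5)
Your proposal is correct and follows essentially the same route as the paper's own (more terse) proof: (a)$\Leftrightarrow$(b) by applying Proposition~\ref{prop:outputcontrollability} pointwise over $\mathcal{Q}(S_A)$, (b)$\Rightarrow$(c) by specializing to the representative $\sgn(S_A)\in\mathcal{Q}(S_A)$, (c)$\Rightarrow$(b) via the graph-theoretic characterization in statement (e), which depends only on the zero/nonzero pattern of the off-diagonal entries and is hence invariant across the qualitative class, and the final clause from the last statement of Proposition~\ref{prop:outputcontrollability}. One negligible slip: the diagonal entries of $S_A$ need not all be $\ominus$ (e.g.\ autocatalytic production), but this is harmless since Metzlerness --- all that your argument uses --- requires only the nonnegativity of the off-diagonal entries, which holds for every member of $\mathcal{Q}(S_A)$.
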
}
\blue{\begin{proof}
  The equivalence between the statements (a) and (b) follows from Proposition \ref{prop:outputcontrollability} and the definition of sign output controllability. The implication that (b) implies (c) is also immediate. To show the reverse direction, it is enough to notice that the output-controllability only depends on the location of the nonzero off-diagonal entry (see Proposition \ref{prop:outputcontrollability}, (e)) and is therefore a structural property. The final statement follows from Proposition \ref{prop:outputcontrollability}.
\end{proof}}

\subsection{Sign antithetic integral control of unimolecular networks}

We are now ready to state the main result of this section:
\begin{theorem}\label{th:affine:structural}
Let us consider an open unimolecular reaction network $(\Xz,\mathcal{R})$ with sign characteristic matrix $S_A$ and sign offset vector $S_b$. Assume further that the closed-loop reaction network $((\Xz,\Zz),\mathcal{R}\cup\mathcal{R}^c)$ is irreducible. Then, the following statements are equivalent:
  \begin{enumerate}[(a)]
      \item The open-loop reaction network $(\Xz,\mathcal{R})$ is sign ergodic and the system $(S_A,e_1,e_\ell^T)$ is sign output controllable.\label{st:sign1}
  \item There exist vectors $v\in\mathbb{R}_{>0}^d$, and $w\in\mathbb{R}_{\ge0}^{d}$, $w_1>0$  such that  the conditions\label{st:st:1}
    \begin{equation}
        v^T\sgn(S_A)<0\textnormal{ and }w^T\sgn(S_A)+e_\ell^T=0.
    \end{equation}
    hold.
  \end{enumerate}
    Moreover, when one of the above statements holds, then the closed-loop reaction network $((\Xz,\Zz),\mathcal{R}\cup\mathcal{R}^c)$ is sign ergodic and we have that $\E[X_\ell(t)]\to\mu/\theta$ as $t\to\infty$  for any values for the controller rate parameters $\eta,k>0$ and each $(A,b_0)\in\mathcal{Q}(S_A)\times \mathcal{Q}(S_b)$ provided that
    \begin{equation}\label{eq:lowerbound3}
      \dfrac{\mu}{\theta}>\dfrac{v^Tb_0}{\alpha e_\ell^Tv}
    \end{equation}
   where $\alpha>0$ and $v\in\mathbb{R}_{>0}^d$ verify $v^T(A+cI)\le0$.\mendth
\end{theorem}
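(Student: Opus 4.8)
The plan is to establish the equivalence between statements (a) and (b) first, and then derive the concluding set-point formula, by reducing everything to the corresponding sign-ergodicity and sign-output-controllability results already proven. First I would invoke Proposition \ref{prop:sign:ergodicity} to translate the sign-ergodicity of $(\Xz,\mathcal{R})$ into the Hurwitz stability of $\sgn(S_A)$, which by the equivalence of its statements (b) and (c) is exactly captured by the existence of a $v\in\mathbb{R}_{>0}^d$ with $v^T\sgn(S_A)<0$. Simultaneously, I would invoke Proposition \ref{prop:sign:controllability} to translate the sign output-controllability of $(S_A,e_1,e_\ell^T)$ into the existence of $w\in\mathbb{R}_{\ge0}^d$ with $w_1>0$ and $w^T(\sgn(S_A)-\mu I_d)+e_\ell^T=0$. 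Since the ergodicity part forces $\sgn(S_A)$ to be Hurwitz stable, the concluding statement of Proposition \ref{prop:sign:controllability} lets me take $\mu=0$, collapsing the controllability condition to $w^T\sgn(S_A)+e_\ell^T=0$. This gives precisely the two conditions in statement (b), so the equivalence of (a) and (b) follows by simply conjoining the two propositions.

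Next I would address the closed-loop conclusion. The key observation is that once (a) or (b) holds, every pair $(A,b_0)\in\mathcal{Q}(S_A)\times\mathcal{Q}(S_b)$ gives an individual unimolecular network that is ergodic and output-controllable (by the definitions of the sign concepts together with Theorem \ref{th:uni:erg} and Proposition \ref{prop:outputcontrollability}). Therefore the nominal antithetic integral control result, Theorem \ref{th:affine:nominal}, applies to each such fixed network: its closed loop is ergodic and $\E[X_\ell(t)]\to\mu/\theta$ provided the set-point lower bound \eqref{eq:lowerbound} holds with a vector $v\in\mathbb{R}_{>0}^d$ and scalar $\alpha>0$ satisfying $v^T(A+\alpha I)\le0$. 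I would then note that since $\sgn(S_A)$ is Hurwitz stable, for each $A\in\mathcal{Q}(S_A)$ such a $v$ and $\alpha$ exist, so that the bound \eqref{eq:lowerbound3} is exactly the specialization of \eqref{eq:lowerbound} to the present sign setting; this establishes set-point tracking uniformly over the qualitative class.

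The main obstacle I anticipate is justifying that the single vector $v$ appearing in \eqref{eq:lowerbound3} can be chosen in a way that works across the qualitative class $\mathcal{Q}(S_A)$, rather than depending on each individual $A$. In the interval and robust cases this is handled by explicit constructions of parameter-dependent certificates (as in Proposition \ref{prop:interval:controllability} and Theorem \ref{th:affine:robust}), but in the sign case the issue is more delicate because the entries of $A$ are only constrained in sign, not in magnitude, and can be arbitrarily large. The cleanest route is to argue that the set-point condition \eqref{eq:lowerbound3} is stated pointwise for each $(A,b_0)$: for each fixed network in the class we separately produce the appropriate $v$ and $\alpha$ from the Hurwitz stability of $A$, exactly as in Theorem \ref{th:affine:nominal}, so no uniform certificate across the class is actually required. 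Making this pointwise reading explicit is what keeps the argument short; the remainder is a direct transcription of the nominal proof with $A$ ranging over $\mathcal{Q}(S_A)$ and $b_0$ over $\mathcal{Q}(S_b)$.
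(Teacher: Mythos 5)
Your proposal is correct and takes essentially the same route as the paper, whose proof consists precisely of combining Proposition \ref{prop:sign:ergodicity} and Proposition \ref{prop:sign:controllability} for the equivalence of (a) and (b) (with $\mu=0$ justified, as you note, by the Hurwitz stability of every matrix in $\mathcal{Q}(S_A)$) and then adapting the concluding statement of Theorem \ref{th:affine:nominal}. Your explicit observation that the set-point bound is to be read pointwise in $(A,b_0)\in\mathcal{Q}(S_A)\times\mathcal{Q}(S_b)$, with $v$ and $\alpha$ produced separately for each fixed network rather than uniformly over the unbounded qualitative class, is exactly the reading the paper intends by ``an adaptation'' of the nominal result.
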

\begin{proof}
The equivalence between the statements (a) and (b) follows from Proposition \ref{prop:sign:ergodicity} and Proposition \ref{prop:sign:controllability}. The conclusion follows from an adaptation of that of Theorem \ref{th:affine:nominal}.
\end{proof}
\blue{The above result naturally translates into the following linear program
\begin{equation}\label{feas:sign}
    \begin{array}{rcl}
      \textnormal{Find} && v\in\mathbb{R}_{>0}^d,\ w\in\mathbb{R}_{\ge0}^{d}\\
      \textnormal{s.t.}&&w^Te_1>0,v^T\sgn(S_A)<0, w^T\sgn(S_A)+e_\ell^T=0
    \end{array}
  \end{equation}
   which has the same complexity as in the nominal and the interval case.}

\section{Structural results}\label{sec:structural}

Similarly as for the interval approach, the sign approach fails to be tight in the presence of conversion reactions. The structural approach is developed here to complement this.

\subsection{A preliminary result}

The following result will play an instrumental role in proving the results in this section:
\begin{lemma}\label{lem:rescaling}
 Let $A(\rho_u)\in\mathbb{R}^{d\times d}$ be the characteristic matrix of some unimolecular network and $\rho_u\in\mathcal{P}_u$. Then, the following statements are equivalent:
  \begin{enumerate}[(a)]
    \item For all $\rho_{dg}\in\mathcal{P}_{dg}$ and a $\rho_{cv}\in\mathcal{P}_{cv}$, the matrix $A(\rho_{dg},\rho_{cv},0)$ is Hurwitz stable.
    \item The matrix $A(\mathds{1},\rho_{cv},0)$ is Hurwitz stable for all $\rho_{cv}\in\mathcal{P}_{cv}$.
  \end{enumerate}
\end{lemma}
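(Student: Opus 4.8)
The plan is to prove the equivalence by exploiting a diagonal rescaling that converts the general diagonal parameters $\rho_{dg}$ into the unit vector $\mathds{1}$, while noting that such a rescaling preserves Hurwitz stability for Metzler matrices. The direction (a) $\Rightarrow$ (b) is immediate, since $\mathds{1}\in\mathcal{P}_{dg}$ (or can be reached by the argument below) so (b) is just a special case of (a). The substance is in the converse (b) $\Rightarrow$ (a).

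For (b) $\Rightarrow$ (a), first I would write $A(\rho_{dg},\rho_{cv},0)$ explicitly. Since the network has no catalytic contribution in this setting (the third argument is $0$) and we are looking at the degradation and conversion structure, the matrix decomposes as $A(\rho_{dg},\rho_{cv},0)=-D(\rho_{dg})+C(\rho_{cv})$ where $D(\rho_{dg})=\diag(\rho_{dg})$ collects the degradation/outflow rates on the diagonal and $C(\rho_{cv})$ is the off-diagonal (nonnegative) part coming from conversion reactions, whose column sums are absorbed into the diagonal. The key observation is that each diagonal rate $\rho^i_{dg}>0$ can be scaled out: introduce the positive diagonal matrix $T:=\diag(\rho_{dg})^{-1}$ (well-defined and bounded since $\rho_{dg}\in\mathcal{P}_{dg}=[\rho_{dg}^-,\rho_{dg}^+]$ with $\rho_{dg}^->0$), and observe that the stability of $A(\rho_{dg},\rho_{cv},0)$ is equivalent to the stability of $T A(\rho_{dg},\rho_{cv},0)$ because $T$ is positive definite and diagonal, so it preserves the Metzler property and does not change the sign of the Frobenius/Perron eigenvalue crossing zero.

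Concretely, I would invoke Proposition \ref{prop:stability}: $A(\rho_{dg},\rho_{cv},0)$ is Hurwitz stable (Metzler) iff there is $v>0$ with $v^TA(\rho_{dg},\rho_{cv},0)<0$. Given stability of $A(\mathds{1},\rho_{cv},0)$, there exists $\tilde v>0$ with $\tilde v^T A(\mathds{1},\rho_{cv},0)<0$. The rescaling lets me transport this certificate: setting $v:=T^{-1}\tilde v=\diag(\rho_{dg})\tilde v>0$ (or the appropriate diagonal transform depending on whether the scaling acts on rows or columns), a direct substitution shows $v^T A(\rho_{dg},\rho_{cv},0)<0$, using that the conversion block $C(\rho_{cv})$ is unchanged under varying $\rho_{dg}$ and that the diagonal scaling redistributes the degradation rates consistently. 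This produces a valid Lyapunov-type vector for every $\rho_{dg}\in\mathcal{P}_{dg}$, yielding (a).

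The main obstacle I anticipate is bookkeeping the exact dependence of the off-diagonal conversion entries on $\rho_{dg}$: in a conversion reaction $\X{i}\to\X{j}$ the same rate contributes both a negative diagonal term in column $i$ and a positive off-diagonal term in row $j$, so $\rho_{dg}$ and the conversion rates are not fully independent, and I must verify that the rescaling argument respects this coupling. The cleanest way around this is to argue purely at the level of the Metzler eigenvalue monotonicity and diagonal similarity rather than manipulating individual entries: a positive diagonal similarity $A\mapsto T^{-1}AT$ preserves eigenvalues exactly and preserves the Metzler structure, while a positive diagonal left-multiplication $A\mapsto TA$ preserves Hurwitz stability of Metzler matrices (since $\lambda_{PF}(A)<0 \iff \exists\, v>0,\ v^TA<0 \iff \exists\, v>0,\ v^T(TA)<0$ after reparametrizing the certificate). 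Establishing which of these two operations correctly maps $A(\rho_{dg},\rho_{cv},0)$ to $A(\mathds{1},\rho_{cv},0)$ — and that it does so uniformly in $\rho_{cv}$ — is the crux, after which the equivalence follows immediately from Proposition \ref{prop:stability}.
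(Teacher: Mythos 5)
Your reduction to $\rho_{dg}=\mathds{1}$ via a positive diagonal rescaling does not go through, and the failure is exactly at the point you flagged as ``the crux'' and left unresolved: no positive diagonal $T$ maps $A(\rho_{dg},\rho_{cv},0)$ into the family $\{A(\mathds{1},\tilde\rho_{cv},0):\tilde\rho_{cv}\in\mathcal{P}_{cv}\}$ in general. A diagonal similarity $T^{-1}AT$ leaves the diagonal entries untouched, so it cannot normalize the degradation rates; and a left multiplication $TA$ with $T=\diag(\rho_{dg})^{-1}$ scales the two entries generated by one and the same conversion rate by different amounts. Concretely, for two species with conversion $\X{1}\to\X{2}$ at rate $c$ and degradation rates $d_1\neq d_2$, one has $A=\left[\begin{smallmatrix} -d_1-c & 0\\ c & -d_2 \end{smallmatrix}\right]$ and $TA=\left[\begin{smallmatrix} -1-c/d_1 & 0\\ c/d_2 & -1\end{smallmatrix}\right]$, which is of the form $A(\mathds{1},\tilde c,0)$ only when $d_1=d_2$. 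Your certificate-transport step ($v:=T^{-1}\tilde v$, ``direct substitution'') relies precisely on such an identification, so the direction (b) $\Rightarrow$ (a) is asserted rather than proved. (Your observation that one-sided positive diagonal scaling preserves Hurwitz stability of Metzler matrices is correct, but it is not the missing ingredient.)

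The paper closes the gap with a \emph{scalar} rescaling combined with entrywise monotonicity, arguing by contraposition: if $A(\rho_{dg},\rho_{cv},0)$ is not Hurwitz for some admissible pair, set $\theta:=\min_i\rho_{dg}^i>0$; since $\theta\mathds{1}\le\rho_{dg}$ and the degradation rates enter the Metzler matrix with a negative sign, $A(\rho_{dg},\rho_{cv},0)\le A(\theta\mathds{1},\rho_{cv},0)$, so $\lambda_{PF}(A(\theta\mathds{1},\rho_{cv},0))\ge 0$ by Perron--Frobenius monotonicity. Degree-one homogeneity of $A$ in all the rates then gives $A(\theta\mathds{1},\rho_{cv},0)=\theta A(\mathds{1},\rho_{cv}/\theta,0)$, and multiplying by the scalar $\theta>0$ preserves (in)stability, so $A(\mathds{1},\tilde\rho_{cv},0)$ is not Hurwitz with $\tilde\rho_{cv}=\rho_{cv}/\theta$, contradicting (b). The scalar rescaling respects the coupling you were worried about because every entry is multiplied by the same factor; note it also silently uses that $\mathcal{P}_{cv}$ is invariant under positive scaling, which holds in the structural setting ($\mathcal{P}_{cv}=\mathbb{R}^{n_{cv}}_{>0}$) where the lemma is applied. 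If you wish to keep your Lyapunov-vector formulation, apply it after this reduction: from $\tilde v^TA(\mathds{1},\rho_{cv}/\theta,0)<0$ you get $\tilde v^TA(\theta\mathds{1},\rho_{cv},0)=\theta\,\tilde v^TA(\mathds{1},\rho_{cv}/\theta,0)<0$, and then $\tilde v^TA(\rho_{dg},\rho_{cv},0)\le\tilde v^TA(\theta\mathds{1},\rho_{cv},0)<0$ by the same entrywise bound, which yields a direct (non-contrapositive) proof of (b) $\Rightarrow$ (a).
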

\begin{proof}
  The proof that (a) implies (b) is immediate. To prove the reverse implication, we use contraposition and we assume that there exist a $\rho_{dg}\in\mathcal{P}_{dg}$ and a $\rho_{cv}\in\mathcal{P}_{cv}$ such that $A(\rho_{dg},\rho_{cv},0)$ is not Hurwitz stable. Then, we clearly have that
  \begin{equation}
    A(\rho_{dg},\rho_{cv},0)\le A(\theta \mathds{1},\rho_{cv},0)
  \end{equation}
  where $\theta=\min(\rho_{dg})$ and hence $A(\theta \mathds{1},\rho_{cv},0)$ is not Hurwitz stable. Since $A(\theta \mathds{1},\rho_{cv},0)$ is affine in $\theta$ and $\rho_{cv}$, then we have that  $\theta A(\mathds{1},\rho_{cv}/\theta,0)$ and since $\theta$ is independent of $\rho_{cv}$, then we get that the matrix $A(\mathds{1},\tilde{\rho}_{cv},0)$ is not Hurwitz stable for some $\tilde{\rho}_{cv}\in\mathcal{P}_{cv}$ . The proof is complete.
\end{proof}

\subsection{Structural ergodicity of unimolecular networks}

We have the following result:
\begin{proposition}\label{prop:structural:ergodicity}
Let us consider an open  irreducible unimolecular reaction network $(\X{},\mathcal{R})$ with parameter-dependent characteristic matrix $A(\rho_u)$, $\rho_u\in\mathcal{P}_u$. Then, the following statements are equivalent:
\begin{enumerate}[(a)]
\item The reaction network $(\X{},\mathcal{R})$ is structurally ergodic.
\item The matrix $A(\rho_u)$ is Hurwitz stable for all $\rho_u\in\mathbb{R}^{n_u}_{>0}$.
\item There exists a polynomial vector $v(\rho_u)\in\mathbb{R}^d$ of degree at most $d-1$ such that $v(\rho_u)>0$ and $v^TA(\rho_u)<0$ for all $\rho_u\in\mathbb{R}^{n_u}_{>0}$.
  \item There exists a $\rho_u^s\in\mathbb{R}^{n_u}_{>0}$ such that the matrix $A(\rho_u^s)$ is Hurwitz stable and the polynomial $(-1)^d\det(A(\rho_u))$ is positive for all $\rho_u\in\mathbb{R}^{n_u}_{>0}$.
\item For all $\rho_{dg}\in\mathbb{R}^{n_{dg}}_{>0}$ and a $\rho_{cv}\in\mathbb{R}^{n_{cv}}_{>0}$, the matrix $A_\rho:=A(\rho_{dg},\rho_{cv},0)$ is Hurwitz stable and we have that $\varrho(W_{ct}A_\rho^{-1}S_{ct})=0$.
    \item The matrix $A_n(\rho_{cv}):=A(\mathds{1},\rho_{cv},0)$ is Hurwitz stable for all $\rho_{cv}\in\mathbb{R}^{n_{cv}}_{>0}$ and $\varrho(W_{ct}A_n(\rho_{cv})^{-1}S_{ct})=0$ for all $\rho_{cv}\in\mathbb{R}^{n_{cv}}_{>0}$.
\end{enumerate}
Moreover, when each column of $S_{cv}$ contains exactly one entry equal to $-1$ and  one equal to 1, then the above statements are also equivalent to
\begin{enumerate}[(a)]
\setcounter{enumi}{5}
    \item The matrix $A_{\mathds{1}}:=A(\mathds{1},\mathds{1},0)$ is Hurwitz stable and $\varrho(W_{ct}A_{\mathds{1}}^{-1}S_{ct})=0$.
   \blue{ \item There exist vectors  $v_c\in\mathbb{R}_{>0}^d$, $v_d\in\mathbb{R}_{>0}^d$, $w\in\mathbb{R}_{\ge0}^d$  such that $v_c^TA_{\mathds{1}}<0$ and $v_d^T(\sgn(W_{ct}A_{\mathds{1}}^{-1}S_{ct})-I_d)<0$.}
        \end{enumerate}
\end{proposition}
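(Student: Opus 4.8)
The plan is to establish the chain of equivalences by leveraging the preliminary results already available and handling the new pieces carefully. I would organize the proof around the structure that statements (a)--(f) are already mutually equivalent, so the real work is showing the equivalence of (g) with the others under the stated assumption on $S_{cv}$, together with confirming the role of the spectral radius condition $\varrho(W_{ct}A_\rho^{-1}S_{ct})=0$.

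First I would address the equivalence of the ergodicity/stability statements (a)--(d), which should follow directly from Proposition \ref{th:uni_rob} (robust ergodicity), Lemma \ref{lem:det}, and Lemma \ref{lem:A+}, specialized to the unbounded parameter set $\mathbb{R}^{n_u}_{>0}$. The structural case is the limit of the robust case as the parameter box grows to the whole positive orthant, so the same determinant-positivity and Perron--Frobenius monotonicity arguments transfer. The appearance of the condition $\varrho(W_{ct}A_\rho^{-1}S_{ct})=0$ in (e)--(g) encodes that the catalytic part $S_{ct}W_{ct}$ does not destabilize the degradation/conversion core $A_\rho=A(\rho_{dg},\rho_{cv},0)$: writing $A(\rho_u)=A_\rho+S_{ct}\diag(\rho_{ct})W_{ct}$, a standard determinant/resolvent identity shows that $A(\rho_u)$ remains Hurwitz for \emph{all} $\rho_{ct}>0$ precisely when the feedback term $S_{ct}W_{ct}A_\rho^{-1}$ has spectral radius zero (i.e. is nilpotent on the relevant subspace), since any positive eigenvalue would admit a destabilizing choice of $\rho_{ct}$. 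I would make this precise via the matrix-determinant lemma, $\det(A_\rho+S_{ct}D W_{ct})=\det(A_\rho)\det(I+DW_{ct}A_\rho^{-1}S_{ct})$, and track sign conditions as $D=\diag(\rho_{ct})$ ranges over the positive orthant.

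Next I would handle the reduction from (e) to (f), which is exactly the content of Lemma \ref{lem:rescaling}: the rescaling argument shows that Hurwitz stability of $A(\rho_{dg},\rho_{cv},0)$ over all positive $\rho_{dg}$ is equivalent to stability of $A(\mathds{1},\rho_{cv},0)$ over all $\rho_{cv}$, and the homogeneity $\theta A(\mathds{1},\rho_{cv}/\theta,0)$ simultaneously rescales the spectral-radius condition, so $\varrho(W_{ct}A_\rho^{-1}S_{ct})=0$ transfers between the two normalizations. The genuinely new step is the equivalence with (g), valid only when each column of $S_{cv}$ has exactly one $-1$ and one $+1$ (i.e. conversion reactions are simple single-species-to-single-species transfers). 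Under this hypothesis the conversion part behaves like a graph Laplacian structure, and I would argue that stability over all $\rho_{cv}>0$ can be collapsed to the single representative point $\rho_{cv}=\mathds{1}$, giving $A_{\mathds{1}}=A(\mathds{1},\mathds{1},0)$. The key claim is that the sign-stability / structural-stability of this Laplacian-like conversion block is insensitive to the positive conversion rates, so one test point suffices.

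The main obstacle I anticipate is the second condition in (g), namely $v_d^T(\sgn(W_{ct}A_{\mathds{1}}^{-1}S_{ct})-I_d)<0$, and establishing that it correctly captures $\varrho(W_{ct}A_\rho^{-1}S_{ct})=0$ for all $\rho_{cv}$ via a single sign-pattern evaluation. The linear-algebraic reformulation $v_c^TA_{\mathds{1}}<0$ is just the Metzler-stability certificate from Proposition \ref{prop:stability}, but the spectral-radius-zero condition must be recast as a \emph{sign}-condition that is provably invariant under the positive scalings of $\rho_{cv}$. The delicate point is that $\sgn(W_{ct}A_{\mathds{1}}^{-1}S_{ct})$ depends only on the combinatorial structure (which catalytic sources can reach which catalytic targets through the stable core), and I would need to show that the Perron root of the nonnegative matrix $W_{ct}A_\rho^{-1}S_{ct}$ (up to sign) vanishes for all $\rho_{cv}$ exactly when its sign pattern is strictly subdiagonal in some ordering — equivalently, when the inequality with $v_d$ is feasible. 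Turning ``spectral radius zero for all positive parameters'' into ``a single feasible linear program on the sign pattern'' is where the single-$(-1)$/single-$(+1)$ assumption is essential and where I expect the argument to require the most care, likely invoking the acyclicity characterization from Proposition \ref{prop:sign:ergodicity}(d) applied to the induced catalytic interaction graph.
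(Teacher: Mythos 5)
Your plan is correct and follows essentially the same route as the paper's own proof: equivalence of (a)--(c) via the robust-ergodicity results, the identity $\det(A(\rho_u))=\det(A_\rho)\det(I+D(\rho_{ct})W_{ct}A_\rho^{-1}S_{ct})$ with $D(\rho_{ct})$ ranging over the positive orthant forcing $\varrho(W_{ct}A_\rho^{-1}S_{ct})=0$, Lemma \ref{lem:rescaling} together with the sign-pattern invariance of inverses of Hurwitz stable Metzler matrices for the normalization $\rho_{dg}=\mathds{1}$, and the zero-diagonal-plus-acyclicity characterization of nilpotent nonnegative matrices combined with Proposition \ref{prop:stability} to obtain the final linear program in $v_c,v_d$. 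The one step you leave at the level of intuition---that the single $-1$/$+1$ column structure of $S_{cv}$ lets you collapse $\rho_{cv}$ to the single test point $\mathds{1}$---is exactly where the paper supplies the concrete mechanism you would need: the weak Lyapunov function $V(z)=\mathds{1}_d^Tz$, which works because $\mathds{1}_{d}^TA(\rho_{dg},\rho_{cv},0)=\begin{bmatrix}-\rho_{dg}^T & 0\end{bmatrix}$ under that column assumption, followed by LaSalle's invariance principle reducing stability to a $\rho_{cv}$-independent submatrix condition.
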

\begin{proof}
The equivalence between the three first statements has been proved in Proposition \ref{prop:robust:ergodicity}. Let us prove now that (c) implies (d). Assuming that (c) holds, we get that the existence of a $\rho_u^s=\col(\rho_{dg}^s,\rho_{cv}^s,\rho_{ct}^s)$ such that the matrix $A(\rho_u^s)$ is Hurwitz stable immediately implies that the matrix $A_\rho=A(\rho_{dg},\rho_{cv},0)$ is Hurwitz stable since we have that $A_\rho\le A(\rho_u)$ and, therefore $\lambda_{PF}(A_\rho)\le \lambda_{PF}(A(\rho_u))<0$.
Using now the determinant formula, we have that
\begin{equation}
    \det(A(\rho_u))=\det(A_\rho)\det(I+D(\rho_{ct})W_{ct}A_\rho^{-1}S_{ct})
\end{equation}
where $D(\rho_{ct}):=\diag(\rho_{ct})$ and $W_{ct}$ is defined such that ${\diag(\rho_{ct})W_{ct}x}$ is the vector of propensity functions associated with the catalytic reactions. Hence, this implies that
\begin{equation}
\det(I+D(\rho_{ct})W_{ct}A_\rho^{-1}S_{ct})>0
\end{equation}
for all $\rho_{ct}\in\mathbb{R}^{n_{ct}}_{>0}$. Since the matrices $W_{ct},S_{ct}$ are nonnegative, the diagonal entries of $D(\rho_{ct})$ are positive and $A_\rho^{-1}$ is nonpositive (since $A_\rho$ is Metzler and Hurwitz stable), then it is necessary that all the eigenvalues of $W_{ct}A_\rho^{-1}S_{ct}$ be zero for the determinant to remain positive. This completes the argument.

The converse (i.e. (d) implies (c)) can be proven by noticing that if $A_\rho$ is Hurwitz stable, then $A_\rho+\epsilon S_{ct}W_{ct}$ remains Hurwitz stable for some sufficiently small $\epsilon>0$. This proves the existence of a  $\rho_u^s\in\mathbb{R}^d_{>0}$ such that the matrix $A(\rho_u^s)$.  Using the determinant formula, it is immediate to see that the second statement implies the determinant condition of statement (c).

The equivalence between the statements (d) and (e) comes from Lemma \ref{lem:rescaling} and the fact that the sign-pattern of the inverse of a Hurwitz stable Metzler matrix is uniquely defined by its sign-pattern; see \cite{Briat:17LAA}.

Let us now focus on the equivalence between the statements (d) and (f) under the assumption that $S_{cv}$ contains exactly one entry equal to $-1$ and  one equal to 1. Assume w.l.o.g that $S_{dg}=\col(-I_{n_{dg}},0)$. Then, we have that $\mathds{1}_{d}^TA(\rho_{dg},\rho_{cv},0)=\begin{bmatrix}
  -\rho_{dg}^T & 0
\end{bmatrix}$. Hence, the function $V(z)=\mathds{1}_d^Tz$ is a weak Lyapunov function for the linear positive system $\dot{z}=A(\rho_{dg},\rho_{cv},0)z$. Invoking LaSalle's invariance principle, we get that the matrix is Hurwitz stable if and only if the matrix
\begin{equation}
  A^{22}(\rho_{dg},\rho_{cv}):=\begin{bmatrix}
    0\\ I
  \end{bmatrix}^TA(\rho_{dg},\rho_{cv},0)  \begin{bmatrix}
    0 \\ I
  \end{bmatrix}
\end{equation}
is Hurwitz stable for all $(\rho_{dg},\rho_{cv})\in\mathbb{R}^{n_{dg}}_{>0}\times\mathbb{R}^{n_{cv}}_{>0}$. Note that this is a necessary condition for the matrix $A(\rho_{dg},\rho_{cv},0) $ to be Hurwitz stable for all rate parameters values. Hence, this means that the stability of the matrix $A_\rho$ is equivalent to the Hurwitz stability of $A_{\mathds{1}}:=A(\mathds{1},\mathds{1},0)$. Finally, since $A^{22}(\rho_{dg},\rho_{cv})$ is Hurwitz stable, then we have that $\mathds{1}^TA^{22}(\mathds{1},\mathds{1})<0$.

\blue{Finally, the equivalence between (f) and (g) follows from Proposition \ref{prop:stability} and the fact that the nonnegative matrix $W_{ct}A_{\mathds{1}}^{-1}S_{ct}$ has a zero spectral radius if and only if all its diagonal elements are zero and its directed graph is acyclic \cite{Briat:17LAA}. This is equivalent to say that the matrix $\sgn(W_{ct}A_{\mathds{1}}^{-1}S_{ct})$ satisfies the same conditions or, equivalently that $\sgn(W_{ct}A_{\mathds{1}}^{-1}S_{ct})-I_d$ is Hurwitz stable. The conclusion then follows.}
\end{proof}

\subsection{Structural output controllability of unimolecular networks}

\blue{We have the following result:
\begin{proposition}\label{prop:structural:controllability}
The following statements are equivalent:
\begin{enumerate}[(a)]
  \item The parameter-dependent system $(A(\rho_u),e_i,e_j^T)$, $\rho_u\in\mathcal{P}_u^\infty$, is structurally output-controllable.
  \item For all $\rho_u\in\mathcal{P}_u^\infty$, there exist a vector-valued function $w:\mathcal{P}_u^\infty\mapsto\mathbb{R}^d_{\ge0}$ and a function $\mu:\mathcal{P}_u^\infty\mapsto\mathbb{R}_{\ge0}$ verifying $w(\rho_u)^Te_i>0$ and $w(\rho_u)^T(A-\mu(\rho_u) I_d)+e_j^T=0$; and
  \item There exists a vector $w\in\mathbb{R}^d_{\ge0}$ and a scalar $\mu\in\mathbb{R}_{\ge0}$  verifying $w^Te_i>0$ and $w^T(\sgn(A)-\mu I_d)+e_j^T=0$.\mendth
\end{enumerate}
Moreover, if all the matrices in $\{A(\rho_u):\rho_u\in\mathcal{P}_u^\infty\}$ are Hurwitz stable then the above statements hold with $\mu\equiv0$.
\end{proposition}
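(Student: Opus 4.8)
The plan is to mirror the proof of Proposition \ref{prop:sign:controllability}, exploiting the fact that output-controllability is a purely structural, graph-theoretic property through statement (e) of Proposition \ref{prop:outputcontrollability}, and that the relevant structure is invariant over the whole parameter domain $\mathcal{P}_u^\infty=\mathbb{R}_{>0}^{n_u}$. For the equivalence between (a) and (b), I would apply the equivalence (a)$\Leftrightarrow$(c) of Proposition \ref{prop:outputcontrollability} pointwise in $\rho_u$: by definition, structural output-controllability means that $(A(\theta),e_i,e_j^T)$ is output-controllable for every $\theta\in\mathcal{P}_u^\infty$, and Proposition \ref{prop:outputcontrollability} characterizes each such instance precisely by the existence of a vector $w$ and a scalar $\mu$ satisfying the conditions displayed in (b).

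The heart of the argument is the equivalence between (b) and (c), and here the key observation is that the off-diagonal zero/nonzero pattern of $A(\rho_u)$ does not vary over $\mathcal{P}_u^\infty$. Since $A(\rho_u)$ is Metzler for all $\rho_u$, each off-diagonal entry $e_n^TA(\rho_u)e_m$ with $m\neq n$ is a sum of nonnegative terms, each being the product of a strictly positive rate parameter and a nonnegative stoichiometric coefficient. Consequently, for $\rho_u\in\mathbb{R}_{>0}^{n_u}$ no cancellation can occur, and such an entry is nonzero if and only if the corresponding reaction is structurally present, a fact independent of the particular value of $\rho_u$. Hence the directed graph $\mathcal{G}_{A(\rho_u)}$ appearing in Proposition \ref{prop:outputcontrollability}(e) coincides with $\mathcal{G}_{\sgn(A)}$ for every $\rho_u\in\mathcal{P}_u^\infty$, so that $\sgn(A)$ is unambiguously defined.

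From there I would close the loop as follows. Because $\sgn(A)$ shares the common off-diagonal pattern, output-controllability of $(A(\theta),e_i,e_j^T)$ for some (equivalently, every) $\theta$ is equivalent, via (e), to the existence of a path from $i$ to $j$ in this single common graph, which is in turn equivalent to output-controllability of $(\sgn(A),e_i,e_j^T)$. Applying the equivalence (a)$\Leftrightarrow$(c) of Proposition \ref{prop:outputcontrollability} to $\sgn(A)$ then yields (c), and conversely (c) propagates back to give (b) for every $\rho_u$. The final statement, that $\mu$ may be taken identically zero when every $A(\rho_u)$ is Hurwitz stable, follows directly from the concluding remark of Proposition \ref{prop:outputcontrollability}, since each $A(\theta)$ is then Metzler and Hurwitz stable.

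The main obstacle I anticipate is not any computation but the careful justification that the off-diagonal zero/nonzero pattern is genuinely constant across $\mathcal{P}_u^\infty$ even when conversion reactions are present. This is exactly the point that distinguishes the structural setting from the interval one, where the bounds $A^-,A^+$ need not be tight, and it hinges on the nonnegativity of the off-diagonal contributions precluding cancellation; once this is secured, the equivalences reduce to the \emph{structural invariance} of the graph $\mathcal{G}_M$ together with the already-established characterizations.
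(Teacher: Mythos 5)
Your proposal is correct and follows essentially the same route as the paper's own proof: the equivalence of (a) and (b) is obtained by applying Proposition \ref{prop:outputcontrollability} pointwise in $\rho_u$, and the equivalence with (c) rests on the observation that output-controllability depends only on the location of the nonzero off-diagonal entries (statement (e) of Proposition \ref{prop:outputcontrollability}), hence is a structural property shared by every $A(\rho_u)$, $\rho_u\in\mathcal{P}_u^\infty$, and by $\sgn(A)$. Your explicit justification that the off-diagonal zero/nonzero pattern is constant over $\mathbb{R}_{>0}^{n_u}$ -- since off-diagonal entries are sums of nonnegative terms so no cancellation can occur, even in the presence of conversion reactions -- is a detail the paper's terse proof leaves implicit, and it is a welcome addition rather than a deviation.
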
}
\blue{\begin{proof}
  The equivalence between the statements (a) and (b) follows from Proposition \ref{prop:outputcontrollability} and the definition of sign output controllability. To show the equivalence with the statement (c), it is enough to notice that the output controllability only depends on the location of the nonzero off-diagonal entry (see Proposition \ref{prop:outputcontrollability}, (e)) and is therefore a structural property. The final statement follows from Proposition \ref{prop:outputcontrollability}.
\end{proof}}

\subsection{Structural antithetic integral control of unimolecular networks}

We are now ready to state the main result of this section:
\begin{theorem}\label{th:affine:structural}
Let us consider an open unimolecular reaction network $(\Xz,\mathcal{R})$ with parameter-dependent characteristic matrix $A(\rho_u)$, $\rho_u\in\mathcal{P}_u^\infty$ and parameter-dependent offset vector $b_0(\rho_0)$, $\rho_0\in\mathbb{R}_{>0}^{n_0}$. Assume further that each column of $S_{cv}$ contains exactly one entry equal to $-1$ and  one equal to 1 and that the closed-loop reaction network $((\Xz,\Zz),\mathcal{R}\cup\mathcal{R}^c)$ is irreducible. Then, the following statements are equivalent:
  \begin{enumerate}[(a)]
      \item The open-loop reaction network $(\Xz,\mathcal{R})$ is structurally ergodic and the system $(A(\rho_u),e_1,e_\ell^T)$ is structurally output controllable.\label{st:str1}
  \item There exist vectors  $v_c\in\mathbb{R}_{>0}^d$, $v_d\in\mathbb{R}_{>0}^d$, $w\in\mathbb{R}_{\ge0}^d$  such that  the conditions\label{st:st:2}
    \begin{equation}
    \begin{array}{c}
      v_c^TA_{\mathds{1}}<0,v_d^T(\sgn(W_{ct}A_{\mathds{1}}^{-1}S_{ct})-I_d)<0,\\
      w^Te_1>0, w^T\sgn(A)+e_\ell^T=0
    \end{array}
    \end{equation}
    hold.
\end{enumerate}
   Moreover, when one of the above statements holds, then the closed-loop reaction network $((\Xz,\Zz),\mathcal{R}\cup\mathcal{R}^c)$ is structurally ergodic and we have that $\E[X_\ell(t)]\to\mu/\theta$ as $t\to\infty$  for any values for the controller rate parameters $\eta,k>0$ and each $(\rho_u,\rho_0)\in\mathcal{P}_u^\infty\times \mathbb{R}_{>0}^{n_0}$ provided that
    \begin{equation}
      \dfrac{\mu}{\theta}>\dfrac{v^Tb_0(\rho_0)}{\alpha e_\ell^Tv}
    \end{equation}
   where $\alpha>0$ and $v\in\mathbb{R}_{>0}^d$ verify $v^T(A(\rho_u)+\alpha I_d)\le0$.\mendth
\end{theorem}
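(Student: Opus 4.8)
The plan is to establish the equivalence (a) $\Leftrightarrow$ (b) by combining the structural ergodicity and structural output-controllability characterizations proved earlier, and then to obtain the closed-loop conclusion by reduction to the nominal result, Theorem \ref{th:affine:nominal}, applied parameter-by-parameter.

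First I would handle the equivalence. Structural ergodicity of $(\Xz,\mathcal{R})$ is, under the standing hypothesis that each column of $S_{cv}$ contains exactly one entry equal to $-1$ and one equal to $1$, equivalent by Proposition \ref{prop:structural:ergodicity} to its statement (g), namely the existence of $v_c,v_d\in\mathbb{R}_{>0}^d$ satisfying $v_c^TA_{\mathds{1}}<0$ and $v_d^T(\sgn(W_{ct}A_{\mathds{1}}^{-1}S_{ct})-I_d)<0$; these are precisely the first two conditions of (b). For the output-controllability part I would invoke Proposition \ref{prop:structural:controllability}: structural output-controllability of $(A(\rho_u),e_1,e_\ell^T)$ is equivalent to the existence of $w\in\mathbb{R}^d_{\ge0}$ and $\mu\ge0$ with $w^Te_1>0$ and $w^T(\sgn(A)-\mu I_d)+e_\ell^T=0$. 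Since structural ergodicity guarantees that $A(\rho_u)$ is Hurwitz stable for every $\rho_u\in\mathcal{P}_u^\infty$, the final statement of Proposition \ref{prop:structural:controllability} allows taking $\mu\equiv0$, collapsing this to $w^Te_1>0$ and $w^T\sgn(A)+e_\ell^T=0$, the remaining two conditions of (b). Because the four conditions in (b) are independent feasibility constraints, no coupling between the ergodicity certificates $v_c,v_d$ and the controllability vector $w$ is required, and the two statements are equivalent.

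For the concluding statement I would reduce to the nominal case. Fixing an arbitrary $(\rho_u,\rho_0)\in\mathcal{P}_u^\infty\times\mathbb{R}_{>0}^{n_0}$, structural ergodicity yields that $A(\rho_u)$ is Hurwitz stable and structural output-controllability yields output-controllability of the single system $(A(\rho_u),e_1,e_\ell^T)$, so statement (a) of Theorem \ref{th:affine:nominal} holds for this fixed network. That theorem then gives ergodicity of $((\Xz,\Zz),\mathcal{R}\cup\mathcal{R}^c)$ and the convergence $\E[X_\ell(t)]\to\mu/\theta$ whenever $\mu/\theta$ exceeds the stated bound, with $v\in\mathbb{R}_{>0}^d$ and $\alpha>0$ verifying $v^T(A(\rho_u)+\alpha I_d)\le0$; such $v,\alpha$ exist by Proposition \ref{prop:stability} since $A(\rho_u)$ is Metzler and Hurwitz stable (take $\alpha>0$ small enough that $A(\rho_u)+\alpha I_d$ is still Hurwitz and apply the characterization $v^TM<0$). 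As $(\rho_u,\rho_0)$ was arbitrary, the closed-loop network is structurally ergodic and the perfect-adaptation property holds throughout the admissible parameter set.

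The hard part will be the bookkeeping in the output-controllability step: Proposition \ref{prop:structural:controllability} phrases the condition in terms of $\sgn(A)$ and an auxiliary scalar $\mu$, and I must confirm that the Hurwitz stability supplied by structural ergodicity is exactly what forces $\mu\equiv0$, so that the resulting constraint matches the finite-dimensional linear-program form appearing in (b). Once that collapse is justified, the remainder is a direct transcription of the nominal argument.
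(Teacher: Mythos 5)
Your proposal is correct and follows essentially the same route as the paper: the equivalence is obtained by combining statement (g) of Proposition \ref{prop:structural:ergodicity} (valid under the stated hypothesis on the columns of $S_{cv}$) with statement (c) of Proposition \ref{prop:structural:controllability}, and the closed-loop conclusion is a pointwise-in-$(\rho_u,\rho_0)$ application of Theorem \ref{th:affine:nominal}. In fact you are slightly more explicit than the paper, which leaves implicit the step you flag as the ``hard part'' --- that Hurwitz stability of all $A(\rho_u)$, supplied by structural ergodicity, activates the ``Moreover'' clause of Proposition \ref{prop:structural:controllability} and collapses the certificate to $\mu\equiv 0$, yielding the $\mu$-free condition $w^T\sgn(A)+e_\ell^T=0$ in statement (b).
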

\begin{proof}
  The equivalence between the two statements follows from Proposition \ref{prop:structural:ergodicity} and Proposition \ref{prop:structural:controllability}. The concluding statement is an adaptation of that of Theorem \ref{th:affine:nominal}.
\end{proof}

\section{Examples}\label{sec:ex}

\subsection{SIR model: Structural ergodicity of a bimolecular network}

Let us consider the open irreducible stochastic SIR model considered in \cite{Briat:13i} described by the matrices
\begin{equation}
  A = \begin{bmatrix}
    -\rho_{dg}^1 & 0 & \rho_{cv}^2\\
    0 & -(\rho_{dg}^2+\rho_{cv}^1) & 0\\
    0 & \rho_{cv}^1 & -(\rho_{dg}^3+\rho_{cv}^2)
  \end{bmatrix},\ S_b=\begin{bmatrix}
-1\\
1\\
0
  \end{bmatrix}
\end{equation}
where all the parameters are positive. The constraint $v^TS_b=0$ enforces that $v=\tilde{v}^TS_b^\bot$, $\tilde{v}>0$, where $S_b^\bot=\begin{bmatrix}
1 & 1 & 0\\
0 & 0 & 1
\end{bmatrix}$. This leads to
\begin{equation}
  \tilde{v}^TS_b^\bot A<0\Leftrightarrow \tilde{v}^T\begin{bmatrix}
-(\rho_{dg}^2+\rho_{cv}^1) & \rho_{cv}^2\\
 \rho_{cv}^1 & -(\rho_{dg}^3+\rho_{cv}^2)
  \end{bmatrix}<0.
\end{equation}
Since the entries are not independent, the corresponding sign-matrix is not sign-stable whereas this matrix is clearly structurally stable. Similarly, the associated interval matrix mays fail to be interval stable even in the case when it would be robustly stable. The latter only holds if the diagonal entries are negative and bounded away from 0.

\subsection{Antithetic integral control of an uncertain unimolecular reaction network}

\blue{Let us consider an open unimolecular irreducible reaction network $(\X{},\mathcal{R})$ with characteristic matrix $A(\rho_u)$ given by
\begin{equation}
  \begin{bmatrix}
    -\rho_{dg}^1 & 0 & 0 & \rho_{ct}^3\\
    \rho_{ct}^1 & -\rho_{dg}^2-\rho_{cv}^1 & 0 & 0\\
    \rho_{ct}^2 & 0 & -\rho_{dg}^3-\rho_{cv}^2+\rho_{ct}^4 & \rho_{cv}^3\\
    0 & \rho_{cv}^1 & \rho_{cv}^2 & -\rho_{dg}^4-\rho_{cv}^3
  \end{bmatrix}
\end{equation}
with $\rho_u=(\rho_{dg},\rho_{cv},\rho_{ct})$. The goal is to act on the first species to control the last one. Hence, we have $b=e_1$ and $c=e_4$. We also assume that the set $\mathcal{P}_u$ is compact for simplicity. The following statements hold:
\begin{enumerate}[(Obs1)]
  \item The network is interval output-controllable if and only if $\rho_{ct}^{1-}\rho_{cv}^{1-}>0$ or $\rho_{ct}^{2-}\rho_{cv}^{2-}>0$.
  \item The network is robustly output-controllable under the same conditions.
  \item The network is sign output-controllable.
  \item The network is structurally output-controllable.
\end{enumerate}
We now focus on the ergodicity property of the associated network. The ergodicity conditions given below also preserve the output-controllability of the network:
\begin{enumerate}[(Erg1)]
  \item The network is interval ergodic if and only if the associated $A^+$ matrix is Hurwitz stable.
  \item The network is robustly ergodic if and only if $\rho_{dg}^{1-}\rho_{dg}^{4-}-\rho_{ct}^{3+}(\rho_{ct}^{1+}+\rho_{ct}^{2+})>0$.
  \item The network is sign ergodic if and only if $\rho_{ct}^3=\rho_{ct}^4=0$, $\rho_{cv}^3\rho_{cv}^2=0$.
  \item The network is structurally ergodic if and only if $\rho_{ct}^3=\rho_{ct}^4=0$.
\end{enumerate}
By mixing the different conditions, we then immediately obtain the associated conditions for the antithetic integral control of the reaction network.}

\section*{Acknowledgments}

This research has received funding from the European Research Council under the European Union's Horizon 2020 research and innovation programme / ERC grant agreement 743269 (CyberGenetics)

\end{document}